\documentclass[12pt]{amsart}
\usepackage{bm}
\usepackage{tikz-cd}
\usepackage{amsmath}
\usepackage{fourier}
\usepackage{amssymb}
\usepackage{amscd}
\usepackage{amsthm}
\usepackage[centertags]{amsmath}
\usepackage{amsfonts}
\usepackage{newlfont}
\usepackage{graphicx}
\usepackage{amsfonts, amssymb}
\usepackage{mathrsfs}
\usepackage{latexsym}
\usepackage{tikz}
\usepackage{verbatim}
\usepackage[all]{xy}
\usepackage{enumitem}

\usepackage{setspace}

\usepackage[colorlinks=true,linkcolor=colorref,citecolor=colorcita,urlcolor=colorweb]{hyperref}
\definecolor{colorcita}{RGB}{21,86,130}
\definecolor{colorref}{RGB}{5,10,177}
\definecolor{colorweb}{RGB}{177,6,38}

\numberwithin{subsection}{section}

\newtheorem{theorem}{Theorem}[section]

\newtheorem{proposition}[theorem]{Proposition}
\newtheorem{corollary}[theorem]{Corollary}
\newtheorem{lemma}[theorem]{Lemma}

\theoremstyle{definition}
\newtheorem{remark}[theorem]{Remark}

\theoremstyle{remark}

\DeclareMathOperator{\id}{\mathrm{id}}

\newcommand{\sid}{\operatorname{\mathbf{Sid}}}

\DeclareMathOperator{\mon}{mon}

\newcommand{\C}{\mathbb{C}}

\newcommand{\TT}{\mathbb{T}}

\newcommand{\chimon}{\chi_{\mon}}

\definecolor{miverde}{RGB}{42, 180, 59}
\definecolor{tincho}{RGB}{128, 0, 255}

\begin{document}
\title[Support-Sensitive BH Inequalities and Local Invariants on Hamming Schemes]
{Support-Sensitive Bohnenblust-Hille Inequalities and Local Invariants on Hamming Schemes}

 \author[Defant]{A.~Defant}
 \address{%
 Institut f\"{u}r Mathematik,
 Carl von Ossietzky Universit\"at,
 26111 Oldenburg,
 Germany}
 \email{defant@mathematik.uni-oldenburg.de}

 \author[Galicer]{D.~Galicer}
 \address{Universidad Torcuato Di Tella. Departamento de Matem\'aticas y Estad\'istica.  IMAS-CONICET.
Av. Figueroa Alcorta 7350 (1428), Buenos Aires, Argentina.}
 \email{dgalicer$@$utdt.edu}

 \author[Mansilla]{M.~Mansilla}
 \address{Departamento de Matem\'{a}tica,
 Facultad de Cs. Exactas y Naturales, Universidad de Buenos Aires and IAM-CONICET. Saavedra 15 (C1083ACA) C.A.B.A., Argentina}
 \email{mmansilla$@$dm.uba.ar}

 \author[Masty{\l}o]{M.~Masty{\l}o}
 \address{Faculty of Mathematics and Computer Science, Adam Mickiewicz University, Pozna{\'n}, Uniwersytetu \linebreak
  Pozna{\'n}skiego 4,
 61-614 Pozna{\'n}, Poland}
 \email{mieczyslaw.mastylo$@$amu.edu.pl}

 \author[Muro]{S.~Muro}
\address{FCEIA, Universidad Nacional de Rosario and CIFASIS, CONICET, Ocampo $\&$ Esmeralda, S2000 Rosario, Argentina}
 \email{muro$@$cifasis-conicet.gov.ar}

\date{}

\begin{abstract}
We investigate local invariants and geometric phenomena for polynomial spaces of low degree on the $q$-ary Hamming scheme $C_q^N$, where $C_q$ denotes the cyclic group of order $q$. Our main analytic tool is a support-sensitive Bohnenblust--Hille inequality for spherical polynomial spaces, showing that the relevant complexity parameter is the support size of the monomials rather than their total degree. Equivalently, in the corresponding toroidal formulation, this leads to estimates for polynomials whose coordinate degrees are bounded by $q-1$, while the growth of the constants is governed by the interaction order of the variables. These inequalities yield applications to the learning theory of spherical low-level functions and also provide the basis for dimension-free comparisons between several classical local invariants, including Sidon constants, unconditional basis constants, and Gordon--Lewis constants. As a consequence, we obtain sharp asymptotic estimates for these invariants in the spherical setting, with analogous comparison and asymptotic results for homogeneous and tetrahedral polynomial spaces. We also study projection constants and the associated reproducing kernels. In the spherical case, suitably normalized Krawtchouk polynomials converge to Hermite polynomials under central-limit scaling, leading to explicit Gaussian limits and sharp asymptotic formulas. By contrast, in the homogeneous and tetrahedral settings a dichotomy appears between the Boolean case and the regime $q\ge3$, where the limiting behaviour is governed by moments of a circular complex Gaussian.
\end{abstract}

\subjclass[2020]{Primary: 46G25, 46B07, 68Q32; Secondary: 43A46, 32A08, 42C10.}

\keywords{}

\maketitle
\pagestyle{plain}

\setcounter{tocdepth}{1}
\tableofcontents

\section{Introduction, preliminaries and main results}
Spaces of low-degree functions on product groups play a central role in Fourier analysis, probability, combinatorics, and local Banach space theory. In the setting of the $q$-ary Hamming scheme $C_q^N$ (where $C_q$ denotes the cyclic group of order $q$), such spaces provide a natural framework in which harmonic-analytic, probabilistic, and geometric phenomena interact in a highly structured way. Their study is closely connected with hypercontractivity, Boolean and cyclic-group Fourier analysis, discretization inequalities, extension and projection problems, and, more recently, questions arising in learning theory and theoretical computer science; see, for instance,
\cite{BannaiIto,DefantMastyloPerez,eskenazis2022learning,MacWilliamsSloane,ODonnell,SloteVolbergZhangBH,SloteVolbergZhangRemez}
and the references therein.

In this context, “low degree” refers broadly to functions whose Fourier
spectrum is supported on families of characters of bounded complexity,
such as characters of bounded degree or bounded support size, with
the relevant parameter fixed independently of the ambient dimension.

A central role in the present work is played by Sidon-type phenomena, which relate the summability of Fourier coefficients to supremum norms, and their interaction with other local invariants, such as unconditional basis constants and Gordon--Lewis constants. Although these quantities originate in rather different parts of harmonic analysis and Banach space theory, in the low-complexity regime, they become tightly connected through dimension-free comparison principles.

We investigate these phenomena for several low-degree polynomial spaces on the $q$-ary Hamming scheme $C_q^N$. More precisely,
we consider three natural classes: the spherical spaces
$\mathcal B_d(C_q^N)$, consisting of functions supported on characters
involving exactly $d$ coordinates; the homogeneous spaces
$\mathcal P_d(C_q^N)$, corresponding to characters of total degree
$d$; and the tetrahedral spaces $
\mathcal T_d(C_q^N)
=
\mathcal B_d(C_q^N)\cap \mathcal P_d(C_q^N)$,
which isolate the square-free homogeneous structure. All these spaces
are considered with the supremum norm inherited from $C(C_q^N)$.

A central analytic tool is a novel
Bohnenblust--Hille type inequality for spherical polynomial spaces, in
which the relevant parameter is the support size of the monomials
rather than their total degree. This is complemented by a support-sensitive
form of the dimension-free Remez transfer from the torus to the Hamming
scheme, obtained by keeping track of the active coordinates in the
randomization argument. This support-sensitive phenomenon leads to
asymptotically sharp estimates for Sidon constants and related local
invariants. Combined with dimension-free comparison principles, it yields
sharp asymptotic estimates for unconditional basis constants and
Gordon--Lewis constants. Analogous results also hold in the
homogeneous and tetrahedral settings.

From the toroidal point of view, this amounts to studying polynomial systems whose coordinate degrees are bounded by $q-1$. While such polynomials may have total degree as large as $(q-1)d$, our estimates are governed by the support size $d$ rather than the total degree. More precisely, the exponential growth is controlled by the interaction order of the variables: despite the potentially much larger total degree, the corresponding Bohnenblust--Hille constant grows only as $(C\sqrt q)^d$. Thus the interaction order rather than the total degree emerges as the governing complexity parameter.

This new inequality also leads to applications in learning
theory, particularly for spherical low-level functions on the Hamming scheme.
Roughly speaking, the problem consists in reconstructing an unknown function
from random evaluations using as few samples as possible, while guaranteeing a
prescribed approximation error with high probability.

Beyond these local and harmonic-analytic aspects, the spaces considered
here also display a rich asymptotic geometric structure. In particular,
we determine the exact asymptotic behaviour of the corresponding
projection constants, viewed as another fundamental family of local
invariants.

When $q=2$, the space $C_q^N$ is the Boolean cube $\{-1,1\}^N$,
and in this setting the spherical, homogeneous, and tetrahedral
structures coincide. For $q\ge3$, however, these spaces exhibit
substantially different asymptotic behaviours: while the spherical
asymptotics remain governed by Hermite polynomials, the homogeneous and
tetrahedral regimes are driven by genuinely complex oscillatory
phenomena.

Before explicitly stating our results, we review the basic definitions and notation that will be used throughout.

\subsection{Fourier analysis on the Hamming scheme}

We briefly recall the Fourier-analytic framework underlying the
spaces considered in this article. 

Let $q\ge 2$ and let
\[
C_q:=\{\omega^0,\omega^1,\dots,\omega^{q-1}\}\subset\mathbb{C},
\qquad \omega=e^{2\pi i/q},
\]
denote the multiplicative cyclic group of order $q$. We write $C_q$, rather than the more common notation $\mathbb{Z}_q$, to emphasize this multiplicative realization. For $N\in\mathbb{N}$, we consider the $N$-fold product
\[
C_q^N = \underbrace{C_q\times\cdots\times C_q}_{N\ \text{times}},
\]
known as the $q$-ary Hamming scheme, endowed with coordinatewise multiplication. This finite Abelian group has cardinality $q^N$, and its Haar measure
is the normalized counting measure. Consequently, for every function $f:C_q^N\to\mathbb{C}$,
\[
\mathbb{E}[f]=\frac{1}{q^N}\sum_{x\in C_q^N} f(x),
\qquad
\langle f,g\rangle=\mathbb{E}[f\,\overline{g}],
\]
define, respectively, the expectation and the standard inner product on $L^2(C_q^N)$.

The dual group $\widehat{C_q^N}$ consists of all characters $\chi:C_q^N\to\mathbb{T}$, that is, group homomorphisms into the torus $\mathbb{T}=\{z\in\mathbb{C}:|z|=1\}$. 
Identifying $\widehat{C_q^N}$ with $\mathbb{Z}_q^N=\{0,\dots,q-1\}^N$, each frequency $\bm m=(m_1,\dots,m_N)$ corresponds to the character
\[
\chi_{\bm m}(x)=x_1^{m_1}\cdots x_N^{m_N},
\qquad x=(x_1,\dots,x_N)\in C_q^N.
\]
The family $\{\chi_{\bm m}:\bm m\in\mathbb{Z}_q^N\}$ forms an orthonormal basis of $L^2(C_q^N)$. Consequently, every function $f:C_q^N\to\mathbb{C}$ admits a unique Fourier expansion
\[
f(x)=\sum_{\bm m\in\mathbb{Z}_q^N} \widehat{f}(\bm m)\,\chi_{\bm m}(x),
\qquad
\widehat{f}(\bm m)=\mathbb{E}\bigl[f\,\overline{\chi_{\bm m}}\bigr].
\]

\subsection*{Level--\texorpdfstring{$d$}{d} spherical polynomials}
For $\bm m=(m_1,\dots,m_N)\in\mathbb{Z}_q^N$, we define its support 
\[
\mathrm{supp}(\bm m)=\{j:\, m_j\neq 0\}.
\]
Thus, $\mathrm{supp}(\bm m)$ records the set of coordinates on which the corresponding character $\chi_{\bm m}$ depends nontrivially, whereas $|\mathrm{supp}(\bm m)|$ denotes the number of such coordinates.

The level--$d$ spherical space is defined by
\[
\mathcal B_d(C_q^N)=\operatorname{span}\{\chi_{\bm m}:|\mathrm{supp}(\bm m)|=d\}.
\]
Thus $\mathcal B_d(C_q^N)$ consists of the span of those characters depending nontrivially on exactly $d$
coordinates.
A function in this space is called a level-$d$ spherical polynomial. 

The terminology ``spherical'' is understood here in the sense of harmonic analysis on the
$q$-ary Hamming scheme. Indeed, as we shall see in Section~\ref{sec2}, the projection constant of this space is the $L_1$-norm of a function which depends only on the Hamming weight
\[
w(x):=\#\{k:x_k\neq 1\},
\]
so these spaces correspond to the radial layers associated with the distance structure of the
scheme. 

In this sense, the terminology is closer to that of zonal spherical functions and
Krawtchouk theory than to the classical theory of homogeneous polynomials on the Euclidean
sphere. Nevertheless, the classical Euclidean picture remains a useful guide, particularly when thinking of decompositions by degree.

\subsection*{Degree--\texorpdfstring{$d$}{d} homogeneous polynomials}

For $m\in \mathbb Z_q^N$ we define its degree by
\[
|m|:=\sum_{j=1}^N m_j,
\]
where the sum is interpreted in $\mathbb N_0$. The degree--$d$ homogeneous polynomial space is
\[
\mathcal P_d(C_q^N)=\operatorname{span}\{\chi_m:|m|=d\}.
\]
This is the natural analogue of the space of homogeneous polynomials of degree $d$ on
$\mathbb C^N$: the characters
\[
\chi_m(x)=x_1^{m_1}\cdots x_N^{m_N}
\]
play the role of monomials, and the condition $|m|=d$ selects those monomials of degree
$d$.

The space $\mathcal P_d(C_q^N)$ decomposes algebraically as
\begin{equation}\label{eq:homogeneous-support-decomposition}
\mathcal P_d(C_q^N)
=
\bigoplus_{k=0}^d \,\,\bigl(\mathcal B_k(C_q^N)\cap \mathcal  P_d(C_q^N)\bigr),
\end{equation}
where the summands correspond to monomials of degree $d$ supported on exactly $k$
coordinates. This decomposition reflects the fact that, for fixed $d$, different support sizes
encode different levels of interaction between coordinates.

Finally, one may also group the Fourier basis by degree and obtain the orthogonal decomposition
\[
L^2(C_q^N)=\bigoplus_{d=0}^{(q-1)N} \mathcal  P_d(C_q^N).
\]

\subsection*{Degree--$d$ tetrahedral polynomials}

The intersection of the spherical and homogeneous structures yields the degree--$d$ tetrahedral space
\[
\mathcal{T}_d(C_q^N)
   := \mathcal{B}_d(C_q^N)\cap \mathcal{P}_d(C_q^N),
\]
consisting of those characters whose degree is $d$ and whose support also has size $d$.  
Equivalently,
\[
\mathcal{T}_d(C_q^N)
   = \mathrm{span}\big\{\chi_{\bm m} : |\bm m|=d,\; |\mathrm{supp}(\bm m)|=d\big\},
\]
that is, monomials of degree $d$ in which the $d$ units of degree are distributed over $d$ distinct coordinates.  
These spaces isolate the square-free part of the homogeneous
polynomials and naturally arise in harmonic analysis on product
groups, discrete Fourier analysis, and polynomial approximation.

\subsection*{Spaces of level/degree at most $d$}

In many situations, it is natural to consider spaces whose spectrum is
supported on levels or degrees at most $d$. More precisely, for each $N$ we
define
\[
\mathcal B_{\le d}(C_q^N)
:=
\bigoplus_{k=0}^d \mathcal B_k(C_q^N),
\quad
\mathcal P_{\le d}(C_q^N)
:=
\bigoplus_{k=0}^d \mathcal P_k(C_q^N),
\quad
\mathcal T_{\le d}(C_q^N)
:=
\bigoplus_{k=0}^d \mathcal T_k(C_q^N).
\]
These correspond, respectively, to the spherical, homogeneous, and
tetrahedral low-complexity structures associated with the Hamming scheme.

\subsection*{Sidon constants}

Sidon constants play a central role in Fourier analysis on compact Abelian
groups. Let $G$ be a compact Abelian group and let $\widehat G$ denote its dual
group. Given $\Lambda\subset\widehat G$, the set $\Lambda$ is called a Sidon
set if there exists a constant $c>0$ such that, for every
$f\in\mathcal P_\Lambda(G)$,
\[
\sum_{\gamma\in\Lambda}|\widehat f(\gamma)|
\le
C\|f\|_{L_\infty(G)}.
\]
The least such constant $C$ is called the Sidon constant of $\Lambda$ and is
denoted by $\sid(\Lambda)$. Equivalently, we shall also write
\[
\sid\big(\mathcal P_\Lambda(G)\big)=\sid(\Lambda).
\]

Thus, for $\Lambda\subset\mathbb Z_q^N$, the Sidon constant of the system
$(\chi_{\bm m})_{\bm m\in\Lambda}$ is the least constant $c>0$ such that,
for every $f\in\mathcal P_{\Lambda}(C_q^N)$,
\[
\sum_{\bm m\in\Lambda}|\widehat f(\bm m)|
\le
C\|f\|_{L_\infty(C_q^N)}.
\]
This invariant links the $\ell_1$-mass of the Fourier coefficients over
$\Lambda\subset\mathbb Z_q^N$ with the supremum norm of the corresponding function. In this
way, it measures how the spectral distribution of $f$ influences its size.
This perspective is particularly well suited to low-complexity Fourier
structures and sparse spectral phenomena.

We will be particularly interested in the Sidon constants corresponding to the
spherical, homogeneous, and tetrahedral systems of characters of level or
degree $d$. These constants are closely connected with classical invariants
from local Banach space theory, such as the Gordon--Lewis constant and the
unconditional basis constant. Precise dimension free comparison results, as
well as the relevant definitions, are presented in Section~\ref{sec: local}.

\subsection*{Projection constants}

Projection constants are fundamental invariants in Banach space
theory, closely related to extension problems and approximation
phenomena; for background we refer to the monograph of
Tomczak--Jaegermann~\cite{tomczak1989banach}.

If $X$ is a subspace of a Banach space $Y$, the \emph{relative
projection constant} of $X$ in $Y$ is defined by
\[
\boldsymbol{\lambda}(X,Y)
=
\inf\big\{\|P\| \colon P \in \mathcal L(Y,X),\quad P|_X=\id_X\big\},
\]
where $\id_X$ denotes the identity operator on $X$. The \emph{absolute
projection constant} of $X$ is given by
\begin{equation}\label{definition}
\boldsymbol{\lambda}(X)
:=
\sup \boldsymbol{\lambda}(I(X),Z),
\end{equation}
where the supremum is taken over all Banach spaces $Z$ and all
isometric embeddings $I \colon X \to Z$.

\subsection*{Main results}
We now discuss the main contributions of the paper in greater detail.

Our first results concern local invariants associated with the spherical,
homogeneous, and tetrahedral spaces
$\mathcal B_d(C_q^N)$,
$\mathcal P_d(C_q^N)$,
and
$\mathcal T_d(C_q^N)$.
More precisely, we study their Sidon constants, unconditional basis
constants, and Gordon--Lewis constants.

A first ingredient for our proofs is a family of Remez-type transfer principles developed in a series of recent works by Becker, Klein, Slote, Volberg and Zhang  \cite{SloteVolbergZhangRemez,SloteVolbergZhangBH,becker2025dimension,KSVZ}. These results provide a mechanism for transferring norm estimates from finite cyclic grids to the full torus and have proved particularly useful in the study of Bohnenblust--Hille type inequalities on cyclic groups. In the present context, they also play a central role in obtaining learning-theoretic consequences. More precisely, their results show that, for low-degree
polynomials, the discrete set $C_q^N$ behaves as a ``norming set'' for the
torus $\TT^N$, up to constants depending only on $d$ and $q$. Consequently,
estimates for spaces of analytic polynomials over $\TT^N$ can be transferred to the Hamming scheme with constants independent of the ambient dimension.

We combine this with a recent principle from \cite{defant2026local}, where
Sidon constants, unconditional basis constants, and Gordon--Lewis constants
are compared for arbitrary families of monomials of uniformly bounded degree
on the torus. Together, these results yield dimension-free equivalences
between the preceding invariants in the spherical, homogeneous, and
tetrahedral settings; see Corollary~\ref{gl-versus-unc}.

Obtaining the asymptotically correct behaviour of these invariants requires
substantially more refined arguments. In the homogeneous and tetrahedral
settings, the asymptotically correct Sidon estimates follow from previously
known Bohnenblust--Hille inequalities for these families \cite{SloteVolbergZhangBH}.

The main analytic novelty is a  Bohnenblust--Hille inequality for
spherical polynomials, providing control of the Fourier coefficients in terms
of the supremum norm with constants depending exponentially on the support
level. The argument combines probabilistic and decoupling techniques with a
random block decomposition of the variables.

Starting from the toroidal model, where the monomials involve dependent powers
$z_i,z_i^2,\ldots,z_i^{q-1}$ of the same variable, we introduce a
dimension-free decoupling procedure replacing these powers by independent
variables. The coordinates are then randomly partitioned into $d$ labelled
blocks, and one isolates the monomials whose supports intersect every block.
This transforms the spherical structure into a tetrahedral multilinear one,
allowing the use of hypercontractive Bohnenblust--Hille inequalities.
Averaging over all labelled partitions recovers a fixed positive proportion of
the coefficients while avoiding the factorial losses associated with full
symmetrization.

The resulting estimate takes the following form. Theorem~\ref{BH cyclic low}
asserts that every function
\[
f
=
\sum_{\substack{
\bm m\in\mathbb Z_q^N\\
|\operatorname{supp}(\bm m)|\le d
}}
\widehat f(\bm m)\chi_{\bm m}
\in
\mathcal B_{\le d}(C_q^N)
\]
satisfies
\[
\Bigg(
\sum_{\substack{
\bm m\in\mathbb Z_q^N\\
|\operatorname{supp}(\bm m)|\le d
}}
|\widehat f(\bm m)|^{\frac{2d}{d+1}}
\Bigg)^{\frac{d+1}{2d}}
\le
D_1(q)^d
\|f\|_{L_\infty(C_q^N)}.
\]
Moreover, the  exponent $\frac{2d}{d+1}$ is optimal.

This support-sensitive inequality extends the known
Bohnenblust--Hille inequalities for bounded total degree \cite{becker2025dimension,SloteVolbergZhangBH}. Indeed, one has
\begin{equation}\label{eq: important inclusion}
\mathcal P_{\le d}(C_q^N)\subset \mathcal B_{\le d}(C_q^N),
\end{equation}
since every monomial of total degree at most $d$ necessarily depends on at
most $d$ coordinates.

Combined with the dimension-free equivalences between local invariants
described earlier, all these estimates lead to the asymptotically correct
behaviour of the preceding quantities. More precisely, if
$\eta$ denotes either the Sidon constant, the unconditional basis constant,
or the Gordon--Lewis constant, then
Theorem~\ref{thm:invariants} shows that
\[
\eta\big(\mathcal B_d(C_q^N)\big),
\quad
\eta\big(\mathcal P_d(C_q^N)\big),
\quad
\eta\big(\mathcal T_d(C_q^N)\big)
\sim_{c(q)^d}
\Big(\frac{N}{d}\Big)^{\frac{d-1}{2}}.
\]

As a byproduct of the support-sensitive Bohnenblust--Hille inequality above, we also obtain
learning-theoretic consequences for low-level functions on Hamming schemes. The Boolean case $q=2$ corresponds to the classical low-degree setting studied
by Eskenazis and Ivanisvili~\cite{eskenazis2022learning}, while learning
results for bounded total degree on products of cyclic groups were later
developed in~\cite{becker2025dimension}. In our setting, functions of level at
most $d$ may still have total degree as large as $(q-1)d$, so the previous
approach applies only with this larger parameter.  Consequently,
although both approaches yield logarithmic dependence on the ambient dimension
$N$, the estimates obtained here improve the dependence on the accuracy
parameter $\varepsilon$, replacing factors of the form $
\varepsilon^{-((q-1)d+1)}
$
by
$
\varepsilon^{-(d+1)}.
$
All these estimates are established in
Theorem~\ref{learning spherical}. As mentioned above, every polynomial of total degree at most $d$
automatically has level at most $d$, so our framework strictly extends the
one considered in~\cite{becker2025dimension}.

Projection constants constitute another fundamental class of local
invariants in the study of the spherical, homogeneous, and tetrahedral
spaces introduced above, viewed as subspaces of $C(C_q^N)$.

In view of the classical Kadets--Snobar theorem~\cite{kadecsnobar}, which
asserts that the projection constant of every finite-dimensional Banach space
$X$ is bounded above by $\sqrt{\dim X}$, it is natural to ask whether this
estimate is asymptotically sharp in the present setting. We show that this is
indeed the case and determine the limits, as $N\to\infty$, of the normalized
quantities
\[
\frac{\boldsymbol{\lambda}(X_N)}{\sqrt{\dim X_N}},
\]
when $X_N$ ranges over the spaces $\mathcal B_d(C_q^N)$,
$\mathcal P_d(C_q^N)$, and $\mathcal T_d(C_q^N)$.

The spherical setting exhibits a universal behaviour. Here,
$Z\sim\mathcal N(0,1)$ denotes a standard Gaussian random variable and
$\mathrm{He}_d$ is the $d$-th probabilists' Hermite polynomial
(see \cite[Ch.~5]{Szego1975} and \cite[\S18.23]{DLMF} for background).
Theorem~\ref{thm: main spherical} shows that
\[
\lim_{N\to\infty}
\frac{\boldsymbol{\lambda}\big(\mathcal B_d(C_q^N)\big)}
{\sqrt{\dim \mathcal B_d(C_q^N)}}
=
\frac{\mathbb E\big|\mathrm{He}_d(Z)\big|}{\sqrt{d!}},
\]
where
\[
\dim \mathcal B_d(C_q^N)=\binom{N}{d}(q-1)^d.
\]

Thus the asymptotic behaviour is universal across all cyclic groups $C_q$.
The Gaussian constant above also admits a precise asymptotic expansion as
$d\to\infty$, namely
\[
\frac{1}{\sqrt{d!}}\,\mathbb E\big|\mathrm{He}_d(Z)\big|
=
\frac{2^{7/4}}{\pi^{5/4}}\,\frac{1}{d^{1/4}}
\bigg(1+O\bigg(\frac{1}{d^2}\bigg)\bigg).
\]

This follows from the asymptotic formula for the $L_1$-norm of Hermite
polynomials due to Larsson--Cohn~\cite[Remarks~2.6 and~3.2]{larsson2002p}.

The underlying mechanism behind this phenomenon is that the projection
constant can be represented as the $L_1$-norm of a function depending only on
the Hamming weight $w(x)=\#\{j:\,x_j\neq 1\}$, and therefore admits an
explicit description in terms of Krawtchouk polynomials, the orthogonal
polynomials naturally associated with the Hamming scheme and the underlying
binomial distribution. After suitable normalization, these polynomials
converge to Hermite polynomials, which are orthogonal with respect to the
Gaussian measure. This asymptotic transition reflects a central limit
phenomenon for the Hamming weight and heuristically explains the Gaussian
limit above. 

We next consider the homogeneous and tetrahedral spaces. In contrast with
the spherical setting, a genuine dichotomy emerges. When $q=2$, the
spherical, homogeneous, and tetrahedral structures coincide, so the
corresponding asymptotic behaviour follows from the spherical case above; see
also \cite{defant2024asymptotic} for a different
approach. For $q\ge3$, however, the situation changes substantially. More precisely, Theorem~\ref{thm:asymptotic-projection-Pd} shows that
\[
\lim_{N\to\infty}
\frac{\boldsymbol{\lambda}\big(\mathcal P_d(C_q^N)\big)}
     {\sqrt{\dim \mathcal P_d(C_q^N)}}
=
\lim_{N\to\infty}
\frac{\boldsymbol{\lambda}\big(\mathcal T_d(C_q^N)\big)}
     {\sqrt{\dim \mathcal T_d(C_q^N)}} =
\frac{\Gamma\big(1+\frac{d}{2}\big)}{\sqrt{d!}}.
\]

The proof combines asymptotic analysis of character sums with
probabilistic limit arguments. In contrast with the spherical setting, the
asymptotics are no longer driven by ``radial'' structures, but by the
interaction and cancellation of oscillatory complex phases arising from the
Fourier characters. More precisely, the relevant quantities involve large
character sums whose terms take values among the $q$-th roots of unity.
These highly oscillatory sums exhibit substantial combinatorial cancellation
and, after normalization, converge asymptotically to circular complex
Gaussian limits. The latter can be computed explicitly, leading to the
constant which appears above.

A final phenomenon concerns spaces whose spectrum is supported on levels or
degrees at most $d$. In this case, a common structural principle governs all
three settings considered above: after normalization by the square root of the
dimension, the asymptotic behaviour is entirely determined by the top layer,
namely the component corresponding to level or degree exactly $d$. This
top-layer principle reappears throughout the paper, governing the asymptotic
behaviour of all the local invariants studied here.

\section{Support-sensitive Bohnenblust--Hille inequalities and learning applications}

The main goal of this section is to establish support-sensitive
Bohnenblust--Hille inequalities for spherical low-level functions on Hamming
schemes and to derive the corresponding learning-theoretic consequences.

A recent breakthrough of Becker, Klein, Slote, Volberg, and Zhang
\cite{becker2025dimension}; see also
\cite{KSVZ},
 \cite{SloteVolbergZhangBH}, established Bohnenblust--Hille inequalities for
functions on products of cyclic groups. Their approach combines a remarkable Remez-type theorem for the Hamming
scheme with the classical Bohnenblust--Hille inequality on the torus. More
precisely, starting from the known estimate in the toroidal setting, one
transfers it to the discrete model through a dimension-free comparison
between supremum norms on $\TT^N$ and on $C_q^N$.

In the spherical setting considered here, however, the relevant parameter is
the support size of the monomials rather than their total degree, and the
preceding approach is no longer sufficient. This leads to a substantially
more delicate situation requiring additional ideas. Nevertheless, the torus
still plays an important role in the argument, and the corresponding
Remez-type transfer principle will therefore be recalled first.

\subsection{Remez-type transfer from the torus to the Hamming scheme} \label{Remez}

Let $\TT=\{z\in\C\colon |z|=1\}$ be endowed with its normalized Haar
probability measure, and let $\TT^N$ be equipped with the product Haar measure.
For integers $q\ge2$ and $d\ge1$, we consider two natural spaces of analytic
polynomials on $\TT^N$ associated with the $q$-ary setting.

First, the $q$-capped homogeneous space of degree $d$ is defined by
\[
\mathcal P_{d,q}(\TT^N)
:=
\operatorname{span}\big\{
z^\alpha\colon
\alpha\in\{0,1,\dots,q-1\}^N,\ |\alpha|=d
\big\}
\subset C(\TT^N).
\]
In parallel, we introduce the toric analogue of the spherical space,
\begin{equation}\label{toric analogue}
\mathcal B_{d,q}(\TT^N)
:=
\operatorname{span}\big\{
z^\alpha\colon \alpha\in\{0,1,\dots,q-1\}^N,\
|\operatorname{supp}(\alpha)|=d
\big\}
\subset C(\TT^N).
\end{equation}
The spherical space is nontrivial only when $1\le d\le N$; in what follows,
whenever spherical spaces of level $d$ are involved, this is the relevant
range.

Note that every polynomial in $\mathcal P_{d,q}(\TT^N)$ or
$\mathcal B_{d,q}(\TT^N)$ restricts to a function on $C_q^N$. Conversely,
every function on $C_q^N$ has a unique Fourier expansion with frequencies
in $\{0,\dots,q-1\}^N$, and this expansion gives a unique extension to an
analytic polynomial on $\TT^N$ whose degree in each variable is at most
$q-1$. Thus $\mathcal P_{d,q}(\TT^N)$ and $\mathcal B_{d,q}(\TT^N)$ provide
toric realizations of $\mathcal P_d(C_q^N)$ and $\mathcal B_d(C_q^N)$,
respectively.

In addition to the exact-degree spaces introduced above, we shall also work
with their cumulative counterparts, denoted by
\[
\mathcal P_{\le d,q}(\TT^N)
\quad\text{and}\quad
\mathcal B_{\le d,q}(\TT^N),
\]
corresponding to degree and support level at most $d$, respectively. Their definitions are entirely analogous.

The main result of \cite{becker2025dimension,KSVZ,SloteVolbergZhangRemez} shows that the supremum norm on the whole torus is controlled by the supremum norm on the discrete grid $C_q^N$, with a constant independent of the ambient dimension. In its total-degree form, this result asserts that there exists a universal constant $C>0$ such that, for every $q\ge2$, $d\ge1$, $N\ge1$, and every polynomial $P\in \mathcal P_{\le d,q}(\TT^N)$, that is, 
\[
P(z)=\sum_{\substack{\alpha\in\{0,\ldots,q-1\}^N\\ |\alpha|\le d}}
a_\alpha z^\alpha, \quad z \in \TT^N,
\]
one has
\begin{equation}\label{thm:SVZ-remez}
\|P\|_{L_\infty(\TT^N)} \le (C\log q)^d \|P\|_{L_\infty(C_q^N)}.
\end{equation}
Equivalently, the grid $C_q^N$ is a norming set for $q$-capped analytic polynomials of total degree at most $d$, with a constant independent of $N$.

Applying this estimate as a black box to the spherical low-level space gives a weaker bound. Indeed, every monomial $z^\alpha$ appearing in $\mathcal B_{\le d,q}(\TT^N)$ satisfies
\[
|\alpha|
\le
(q-1)|\operatorname{supp}(\alpha)|
\le
(q-1)d.
\]
Hence every polynomial in $\mathcal B_{\le d,q}(\TT^N)$ has total degree at most $(q-1)d$, and \eqref{thm:SVZ-remez} gives
\begin{equation}\label{eq:Remez-spherical}
\|P\|_{L_\infty(\TT^N)}
\le
(C\log q)^{(q-1)d}
\|P\|_{L_\infty(C_q^N)},
\quad
P\in\mathcal B_{\le d,q}(\TT^N).
\end{equation}

\noindent
A support-sensitive version of this removes this loss. Although this formulation does not appear explicitly in \cite{KSVZ}, it is implicit in their proof of the dimension-free Remez inequality. Indeed, the relevant part of the randomization argument depends on the number of active coordinates of the monomials involved, rather than on their total degree. Keeping track of this parameter throughout the proof yields a support-sensitive refinement. Since this formulation will be needed later for spherical levels, we present the details here, following closely the notation and randomization scheme of \cite{KSVZ}.

Fix $z=(z_1,\ldots,z_N)\in\TT^N$. The one-dimensional interpolation lemma from \cite[Lemma 7]{KSVZ} gives a universal constant $B>0$ with the following property. For each coordinate $z_j$, there are step functions 
\[ 
r\colon [0,D]\to\{1,i,-1,-i\}, \quad w_j\colon [0,D]\to C_q, 
\] 
with $D\le C_0\log q$ for a universal constant $C_0>0$, such that, if $T$ is uniformly distributed on $[0,D]$, then 
\[ 
z_j^k = D\,\mathbb E\big[r(T)w_j(T)^k\big], \quad k=0,1,\ldots,q-1. 
\]
The phase function $r$ is common to all coordinates, whereas $w_j$ depends on $z_j$.

For an integer $m\ge1$, let $T_1,\ldots,T_m$ be independent random variables uniformly distributed on $[0,D]$, and let
\[
\sigma\colon\{1,\ldots,N\}\to\{1,\ldots,m\}
\]
be a uniformly random map, independent of $T_1,\ldots,T_m$. Define
\[
R_m:=\prod_{\ell=1}^m r(T_\ell)
\]
and
\[
W_m=(W_{m,1},\ldots,W_{m,N})\in C_q^N,
\quad
W_{m,j}:=w_j(T_{\sigma(j)}).
\]
Then $|R_m|=1$ and $W_m\in C_q^N$.

The next lemma isolates the monomial representation obtained from the randomization construction.

\begin{lemma}\label{lem:support-sensitive-monomial-representation} Let $q\ge2$, let $z\in\TT^N$, let $\alpha\in\{0,\ldots,q-1\}^N$, and let $S=\operatorname{supp}(\alpha)$ with $s:=|S|$. Then there exists a polynomial $p_\alpha$ such that 
\[ 
p_\alpha(0)=0, \quad \deg p_\alpha<s,
\] 
with the convention that the zero polynomial has degree $-\infty$, and, for every integer $m\ge1$,
\[
z^\alpha = D^m\mathbb E\big[R_mW_m^\alpha\big] + p_\alpha(1/m). 
\] 
\end{lemma}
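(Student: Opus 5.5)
We compute $\mathbb E[R_mW_m^\alpha]$ by conditioning on the random map $\sigma$. Since $\alpha$ is supported on $S$, only $\sigma|_S$ matters. Fix $\sigma$, let $\pi=\{\sigma^{-1}(\ell)\cap S\}_{\ell}$ be the induced partition of $S$ into nonempty blocks, and write $\alpha_B:=\sum_{j\in B}\alpha_j$-style notation for the restricted exponents. Then
\[
R_mW_m^\alpha=\prod_{\ell=1}^m\Big(r(T_\ell)\prod_{j\in S,\ \sigma(j)=\ell}w_j(T_\ell)^{\alpha_j}\Big),
\]
and the factors are independent in $\ell$. The indices $\ell$ not in $\sigma(S)$ contribute $\mathbb E[r(T)]=D^{-1}$ each, by the interpolation identity with $k=0$ (indeed $z_j^0=1=D\,\mathbb E[r(T)]$). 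A block $\ell$ with exactly one element $j$ contributes $\mathbb E[r(T)w_j(T)^{\alpha_j}]=D^{-1}z_j^{\alpha_j}$. A block $B$ with $|B|\ge2$ contributes some constant $c_B:=\mathbb E[r(T)\prod_{j\in B}w_j(T)^{\alpha_j}]$, which depends only on $B$ (and on $z,\alpha$), not on $m$. Hence, conditionally on $\sigma$,
\[
D^m\,\mathbb E[R_mW_m^\alpha\mid\sigma]=\prod_{B\in\pi}\Phi(B),\qquad \Phi(B):=\begin{cases} z_j^{\alpha_j}, & B=\{j\},\\ D\,c_B, & |B|\ge 2.\end{cases}
\]
Note the powers of $D$ balance exactly: $m$ factors of $D$ against one $D^{-1}$ per index $\ell$.

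Next we average over $\sigma$. The value depends only on the partition $\pi$ of $S$. For a set partition $\pi$ of $S$ with $k$ blocks, the probability that $\sigma|_S$ induces $\pi$ is the number of injective labelings $m(m-1)\cdots(m-k+1)$ divided by $m^s$, that is,
\[
\prod_{i=0}^{k-1}\Big(1-\frac{i}{m}\Big)\, m^{k-s}.
\]
The discrete partition ($k=s$) has probability $\prod_{i<s}(1-i/m)=1+(\text{polynomial in }1/m \text{ vanishing at }0,\ \text{degree}\le s-1)$ and value $z^\alpha$. Every other partition has $k\le s-1$, and its probability is $(1/m)^{s-k}\prod_{i<k}(1-i/m)$, a polynomial in $1/m$ with zero constant term and degree $(s-k)+(k-1)=s-1$. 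Summing, $D^m\mathbb E[R_mW_m^\alpha]=z^\alpha+q_\alpha(1/m)$ with $q_\alpha(0)=0$ and $\deg q_\alpha\le s-1<s$, so we set $p_\alpha:=-q_\alpha$. For $s=0$ all factors are trivial and $p_\alpha=0$; for $s=1$ there is only the discrete partition, which has probability $1$, so again $p_\alpha=0$, consistent with $\deg p_\alpha<s$. The polynomial $p_\alpha$ is independent of $m$ because the $c_B$ are.

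The main point to check carefully is this $m$-independence of the coefficients, namely that the unused indices $\ell$ exactly cancel the factor $D^m$. That is what the $k=0$ case of the interpolation identity gives. Note also that the identity holds for every $m\ge1$, including $m<s$, where the falling factorial vanishes automatically for partitions with more than $m$ blocks.
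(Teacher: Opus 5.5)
Your proposal is correct and follows essentially the same route as the paper: condition on the partition of $S$ induced by $\sigma$, note that the rescaled conditional expectation is independent of $m$ and equals $z^\alpha$ for the singleton partition, and expand each partition probability $m(m-1)\cdots(m-k+1)/m^s$ as a polynomial in $1/m$ of degree at most $s-1$. You also spell out two points the paper leaves implicit: the factor $D^m$ cancels because of the $k=0$ case of the interpolation identity, and the identity still holds for $m<s$ because the falling factorial vanishes.
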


\begin{proof}
The case $s=0$ corresponds to $\alpha=0$ and is immediate, since
\[
D^m\mathbb E[R_m]=1.
\]
Thus we may take $p_\alpha=0$. We therefore assume that $s\ge1$.

Only the coordinates in $S=\operatorname{supp}(\alpha)$ are relevant. The random map $\sigma$ induces a partition of $S$: two points of $S$ belong to the same block precisely when they are sent by $\sigma$ to the same element of $\{1,\ldots,m\}$.

Let $\pi$ be a partition of $S$ with $t$ blocks. On the event that $\sigma$ induces $\pi$, the expectation has the form 
\[ 
\mathbb E\big[R_mW_m^\alpha\mid \sigma \text{ induces } \pi\big] 
= D^{-m}E_\pi(z,\alpha), 
\] 
where $E_\pi(z,\alpha)$ is independent of $m$. In the special case of the singleton partition $\pi_0$, that is, when $\sigma$ is injective on $S$, the one-dimensional identities above multiply without collision and give \[ 
E_{\pi_0}(z,\alpha)=z^\alpha. 
\]

We now examine the probability that $\sigma$ induces a fixed partition $\pi$ with $t$ blocks. This probability is
\[
\mathbb P(\sigma \text{ induces } \pi)
=
\frac{m(m-1)\cdots(m-t+1)}{m^s}.
\]
Equivalently, putting $x=1/m$, we can write
\[
\frac{m(m-1)\cdots(m-t+1)}{m^s}
=
x^{s-t}\prod_{r=0}^{t-1}(1-rx).
\]
If $t=s$, corresponding to the singleton partition, this is of the form
\[
1+q_{\pi_0}(x),
\]
where
\[
q_{\pi_0}(0)=0,
\quad
\deg q_{\pi_0}<s.
\]
If $t<s$, then
\[
x^{s-t}\prod_{r=0}^{t-1}(1-rx)
\]
has zero constant term and degree at most $s-1$.

Using the law of total probability over all partitions of $S$, we obtain
\[
D^m\mathbb E\big[R_mW_m^\alpha\big]
=
z^\alpha+h_\alpha(1/m),
\]
where
\[
h_\alpha(0)=0,
\quad
\deg h_\alpha<s.
\]
Setting $p_\alpha=-h_\alpha$ gives the desired identity.
\end{proof}

The preceding lemma contains the key support-sensitive ingredient. Combining it with a finite interpolation argument in the auxiliary parameter $1/m$ eliminates the collision terms and leads to the desired Remez estimate. Although the estimate below is not stated in this form in \cite{KSVZ}, it follows by retaining the support-size parameter in the proof of \cite[Theorem 5]{KSVZ}.

\begin{theorem}\label{thm:remez-support}
There exists a universal constant $C>0$ such that, for every $q\ge2$, $d\ge0$, $N\ge1$, and every polynomial $P\in\mathcal B_{\le d,q}(\TT^N)$, one has
\[
\|P\|_{L_\infty(\TT^N)}
\le
(C\log q)^d
\|P\|_{L_\infty(C_q^N)}.
\]
\end{theorem}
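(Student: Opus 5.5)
The plan is to combine Lemma~\ref{lem:support-sensitive-monomial-representation} with Lagrange interpolation in the auxiliary variable $x=1/m$, following the proof of \cite[Theorem 5]{KSVZ}.

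Fix $z\in\TT^N$ and write $P=\sum_\alpha a_\alpha z^\alpha$, where every $\alpha$ has $|\operatorname{supp}(\alpha)|\le d$. The case $d=0$ is trivial, so assume $d\ge1$. Applying the lemma to each monomial and summing gives, for every integer $m\ge1$,
\[
P(z)=D^m\,\mathbb E\big[R_m P(W_m)\big]+Q(1/m),
\qquad Q:=\sum_\alpha a_\alpha p_\alpha .
\]
Since each $p_\alpha$ satisfies $p_\alpha(0)=0$ and $\deg p_\alpha<|\operatorname{supp}(\alpha)|\le d$, the polynomial $Q$ has degree at most $d-1$ and $Q(0)=0$. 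This is the support-sensitive gain: the degree of the error polynomial is bounded by the level $d$, not by the total degree $(q-1)d$. Set $F(m):=D^m\mathbb E[R_mP(W_m)]$. Because $|R_m|=1$ and $W_m\in C_q^N$, we have $|F(m)|\le D^m\|P\|_{L_\infty(C_q^N)}$. Moreover, $F(m)=P(z)-Q(1/m)$, so $g(x):=P(z)-Q(x)$ is a polynomial of degree at most $d-1$ with $g(0)=P(z)$ and $g(1/m)=F(m)$.

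We then choose $d$ nodes $x_k=1/m_k$ and recover $g(0)$ by Lagrange interpolation:
\[
P(z)=g(0)=\sum_{k=1}^{d}\lambda_k F(m_k),
\qquad \lambda_k=\prod_{j\ne k}\frac{x_j}{x_j-x_k}=\prod_{j\ne k}\frac{m_k}{m_k-m_j}.
\]
This yields
\[
|P(z)|\le \Big(\sum_k|\lambda_k|D^{m_k}\Big)\|P\|_{L_\infty(C_q^N)} .
\]
With the natural choice $m_k=k$ for $k=1,\dots,d$, we get $|\lambda_k|=\frac{k^{d-1}}{(k-1)!(d-k)!}$. Then $\sum_k|\lambda_k|\le \frac{d^{d-1}}{(d-1)!}\,2^{d-1}\le (2e)^d$, and $D^{m_k}\le D^d\le (C_0\log q)^d$. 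Taking the supremum over $z$ gives the claimed bound with $C=2eC_0$, after enlarging $C$ if needed to handle $q=2$, where $\log q$ is small. Here we also use that $D$ can be taken $\ge1$; otherwise $D^{m_k}\le\max(1,D)^d$, which is absorbed in the same way.

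The main obstacle is the bookkeeping in the first step. We must check that the lemma's error polynomial really has degree $<s$ uniformly, with the same $D$ and $r$ for all coordinates, so that the summed $Q$ is a single polynomial of degree $\le d-1$ independent of $m$. The lemma as stated already guarantees this. The remaining work is the elementary estimate of the Lagrange weights, which must be only exponential in $d$. The choice $m_k=k$ achieves this, since $k^{d-1}/(k-1)!$ is controlled by $e^{d}$-type bounds.
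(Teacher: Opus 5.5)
Your proposal is correct and follows the paper's proof essentially verbatim. Your Lagrange weights at the nodes $1/k$, $k=1,\ldots,d$, are exactly the paper's coefficients $\lambda_m=(-1)^{d-m}m^d/(m!(d-m)!)$, which the paper describes as the functional that annihilates polynomials $p$ with $p(0)=0$ and $\deg p<d$ while preserving constants. Your explicit bound $\sum_k|\lambda_k|\le(2e)^d$ and your remark about $D\ge1$ fill in details the paper only asserts; note that $D\ge1$ holds automatically, since $1=D\,\mathbb E[r(T)]$ and $|r|=1$.
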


\begin{proof}
The case $d=0$ is immediate, since then $P$ is constant and hence
\[
\|P\|_{L_\infty(\TT^N)}
=
\|P\|_{L_\infty(C_q^N)}.
\]
We therefore assume that $d\ge1$.

Fix $z\in\TT^N$. Applying Lemma~\ref{lem:support-sensitive-monomial-representation} to each monomial appearing in $P$ and summing, we obtain, for every $m=1,\ldots,d$,
\[
P(z)
=
D^m\mathbb E\big[R_mP(W_m)\big]
+
p_z(1/m),
\]
where
\[
p_z(0)=0,
\quad
\deg p_z<d.
\]
Indeed, every monomial appearing in $P$ satisfies $|\operatorname{supp}(\alpha)|\le d$, and hence its error term has degree strictly smaller than $d$.

Choose scalars $\lambda_1,\ldots,\lambda_d$ such that
\[
\sum_{m=1}^d\lambda_m=1,
\quad
\sum_{m=1}^d\lambda_m m^{-r}=0,
\quad
r=1,\ldots,d-1.
\]
Equivalently, the functional
\[
p\mapsto\sum_{m=1}^d\lambda_m p(1/m)
\]
annihilates every polynomial $p$ with $p(0)=0$ and $\deg p<d$, while preserving constants. The explicit choice is
\[
\lambda_m
=
(-1)^{d-m}\frac{m^d}{m!(d-m)!},
\quad
m=1,\ldots,d.
\]
In particular, these coefficients satisfy
\[
\sum_{m=1}^d|\lambda_m|
\le
\exp(C_1d)
\]
with a universal constant $C_1>0$.

Multiplying the identity for $P(z)$ by $\lambda_m$ and summing over $m=1,\ldots,d$, the error term disappears and we get
\[
P(z)
=
\sum_{m=1}^d
\lambda_mD^m\mathbb E\big[R_mP(W_m)\big].
\]
Since $W_m\in C_q^N$ and $|R_m|=1$, it follows that
\[
|P(z)|
\le
\sum_{m=1}^d|\lambda_m|D^m\|P\|_{L_\infty(C_q^N)}
\le
D^d\left(\sum_{m=1}^d|\lambda_m|\right)
\|P\|_{L_\infty(C_q^N)}.
\]
Using $D\le C_0\log q$ and absorbing the factor $\exp(C_1d)$ into the base of the exponential, we obtain
\[
|P(z)|
\le
(C\log q)^d\|P\|_{L_\infty(C_q^N)}.
\]
Taking the supremum over $z\in\TT^N$ completes the proof.
\end{proof}

We conclude this subsection with a complementary discretization estimate in the fine-grid regime $q>d$. It shows that when the sampling grid is substantially finer than the degree of the polynomial, the constant in the Remez-type comparison can be chosen close to $1$. The proof is based on a multidimensional version of Stečkin's lemma due to de la Chevrotière~\cite{de2009finding}.

\begin{proposition}\label{prop: q>d}
Assume that $q>d$. Then every $P\in\mathcal P_{\le d,q}(\TT^N)$ satisfies
\[
\|P\|_{L_\infty(\TT^N)}
\le
\sec\!\left(\frac{\pi d}{2q}\right)
\|P\|_{L_\infty(C_q^N)}.
\]
\end{proposition}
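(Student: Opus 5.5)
The plan is to apply the classical Stečkin argument for trigonometric polynomials of degree $<q$ sampled at $q$-th roots of unity, in the multidimensional form of de la Chevrotière~\cite{de2009finding}. Fix $P\in\mathcal P_{\le d,q}(\TT^N)$ and choose a point $z^*\in\TT^N$ where $|P|$ attains its maximum $M:=\|P\|_{L_\infty(\TT^N)}$. After multiplying $P$ by a unimodular constant, we may assume that $P(z^*)=M$ is real and positive. We then restrict attention to the one-variable trigonometric polynomial obtained along the diagonal direction through $z^*$.

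Concretely, for $z\in\TT^N$ write $z_j=z^*_j e^{i\theta_j}$ and define
\[
g(t):=\operatorname{Re}P\big(z^*_1e^{it},\dots,z^*_Ne^{it}\big).
\]
Since every monomial of $P$ has total degree at most $d$, $g$ is a real trigonometric polynomial in $t$ of degree at most $d$. Moreover $\|g\|_\infty\le M$ and $g(0)=M$. Stečkin's lemma (Bernstein's inequality in the form $g(0)-g(t)\le M(1-\cos(d t))$ for $|t|\le\pi/d$) then gives
\[
g(t)\ge M\cos(dt),\qquad |t|\le \pi/d.
\]
The same bound holds for the multivariate function along any direction. The multidimensional version in \cite{de2009finding} states this directly: for $|\theta_j|\le \pi/d$,
\[
\operatorname{Re}P(z^*e^{i\theta})\ge M\cos\big(d\max_j|\theta_j|\big).
\]
One can also obtain this by applying the one-variable statement to $t\mapsto \operatorname{Re}P(z^*e^{it\theta/\|\theta\|_\infty})$, which has degree at most $d\|\theta\|_\infty$ in the appropriate sense.

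Next, pick a grid point $x\in C_q^N$ closest coordinatewise to $z^*$. Each coordinate satisfies $x_j=z^*_je^{i\theta_j}$ with $|\theta_j|\le \pi/q$, and the hypothesis $q>d$ guarantees $\pi/q<\pi/d$. Hence
\[
\|P\|_{L_\infty(C_q^N)}\ge |P(x)|\ge \operatorname{Re}P(x)\ge M\cos\big(\tfrac{\pi d}{q}\big).
\]
This yields a constant $\sec(\pi d/q)$ rather than the claimed $\sec(\pi d/(2q))$.

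The main obstacle is recovering the factor $1/2$. To do so, I will exploit the extra freedom of a global rotation. The function $u\mapsto P(z^*e^{iu}\cdot)$ cannot simply be shifted, because $P$ is not homogeneous. Instead I will use the fact that each coordinate's admissible phase set $\{\theta_j: z^*_je^{i\theta_j}\in C_q\}$ is a coset of $\frac{2\pi}{q}\mathbb Z$. The key step is to apply Stečkin's inequality in the refined two-sided form, as in the multidimensional lemma of \cite{de2009finding}. This form compares the maximum with the values at a symmetric pair of points at distance $\pi/q$ on either side, using the midpoint (extremal) property of $\cos$. The effect is to replace $\cos(\pi d/q)$ by $\cos(\pi d/(2q))$. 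If this route proves delicate, my fallback is to quote the multidimensional Stečkin lemma of \cite{de2009finding} directly. That lemma gives exactly $\max_{\TT^N}|P|\le\sec(\pi d/(2q))\max_{\text{grid}}|P|$ for polynomials of degree $<q$ in each variable and total degree $d$ on a $q$-point-per-coordinate equispaced grid. With it, the proof reduces to checking that $\mathcal P_{\le d,q}(\TT^N)$ satisfies its hypotheses, namely coordinate degrees at most $q-1$ and total degree at most $d<q$.
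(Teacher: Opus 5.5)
\textbf{Verdict: the factor-$\tfrac12$ step cannot be done, because the stated inequality is false for $N\ge2$; your first step correctly proves the weaker multivariate bound $\sec(\pi d/q)$.}

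\textbf{What your first step proves.} Restrict $\operatorname{Re}(e^{-i\phi}P)$ to a line $\tau\mapsto z^*e^{i\tau u}$ with $\|u\|_\infty\le1$. The frequencies are $\alpha\cdot u\in[-d,d]$, which need not be integers, so the one-variable input must be the Bernstein--Ste\v{c}kin inequality for real entire functions of exponential type $d$. Your phrase ``degree at most $d\|\theta\|_\infty$'' should read ``exponential type at most $d$''. Moving to the nearest grid point then gives
\[
\|P\|_{L_\infty(\TT^N)}\le\sec(\pi d/q)\,\|P\|_{L_\infty(C_q^N)},
\]
which says something only when $q>2d$.

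\textbf{The gap.} The step meant to recover the factor $\tfrac12$ cannot work. Neither the ``two-sided'' refinement nor the fallback citation can deliver $\sec(\pi d/(2q))$, because that inequality fails once $N\ge2$. Take $q=2$, $d=1$ and
\[
P(z_1,z_2)=1+e^{i\pi/3}z_1+e^{2i\pi/3}z_2\in\mathcal P_{\le 1,2}(\TT^2).
\]
Then $\|P\|_{L_\infty(\TT^2)}=3$, attained at $(e^{-i\pi/3},e^{-2i\pi/3})$. The four values on $C_2^2=\{\pm1\}^2$ are $1+i\sqrt3$, $2$, $0$, $1-i\sqrt3$, so $\|P\|_{L_\infty(C_2^2)}=2$. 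Hence $3>2\sqrt2=\sec(\pi/4)\cdot2$, contradicting the claim. More generally, for $d=1$ and any $q$, spread the phases $\theta_j$ of $N^{-1}\sum_j e^{i\theta_j}z_j$ uniformly over $[0,2\pi/q)$. The ratio of the two norms then tends to $\frac{\pi/q}{\sin(\pi/q)}$, which is strictly larger than $\sec\big(\frac{\pi}{2q}\big)$.

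\textbf{Where the paper's proof fails.} The paper gets the $\tfrac12$ by applying the multidimensional Ste\v{c}kin lemma to $Q=2|P|^2/\|P\|_{L_\infty(\TT^N)}^2-1$ and asserting that $Q$ has total degree at most $d$. That is true for $N=1$, where the frequencies are $k-l$ with $0\le k,l\le d$; this is the classical one-variable $\sec(\pi d/(2q))$ result. For $N\ge2$ the frequencies $\alpha-\beta$ of $|P|^2$ can have $\ell^1$-length $2d$. For example, $P=z_1^d+z_2^d$ gives $Q(s)=\cos\big(d(s_1-s_2)\big)$. Along $s=(\tau,-\tau)$ this is $\cos(2d\tau)$, which violates $Q(t)\ge\cos(d\|t\|_\infty)$ at $\tau=\pi/(2d)$. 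Run with the correct degree $2d$, the squaring argument yields only $|P(\omega)|\ge\|P\|_{L_\infty(\TT^N)}\cos(\pi d/q)$, which is exactly your bound.

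So the constant $\sec(\pi d/q)$ for $q>2d$ is what this method actually gives in several variables. The proposition as stated is false in general for $N\ge2$ and should be corrected rather than proved. Your last paragraph should be replaced by the weaker statement.
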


\begin{proof}
Choose $t^0=(t_1^0,\ldots,t_N^0)\in\mathbb R^N$ so that
\[
\big|P\big(e^{it_1^0},\ldots,e^{it_N^0}\big)\big|
= \|P\|_{L_\infty(\TT^N)}.
\]
Since $C_q$ consists of the $q$-th roots of unity, for each coordinate $e^{it_j^0}$ we may choose $\omega_j\in C_q$ whose angular distance from $e^{it_j^0}$ is at most $\pi/q$. Hence there are real numbers $t_1,\ldots,t_N$ such that
\[
\omega_j=e^{i(t_j^0+t_j)} \quad\text{and}\quad
|t_j|\le\frac{\pi}{q}
\]
for each $j\in \{1,\ldots,N\}$. In particular, for the fixed vector $t=(t_1,\ldots,t_N)$ we have $\max_j|t_j|\le\pi/q$.

We now translate the maximum point to the origin and consider the real trigonometric polynomial
\[
Q(s):=2\frac{
\big|P\big(e^{i(t_1^0+s_1)},\ldots,e^{i(t_N^0+s_N)}\big)\big|^2
}{
\|P\|_{L_\infty(\TT^N)}^2
}
-1,
\quad
s=(s_1,\ldots,s_N)\in\mathbb R^N.
\]
By the argument in \cite[Section~4]{de2009finding}, the polynomial $Q$ has total degree at most $d$. Moreover, by construction, $Q(0)=1$, and the maximality of the chosen point implies that $\|Q\|_\infty=1$. Since $q>d$, the fixed vector $t$ satisfies $\max_j|t_j|\le\pi/q<\pi/d$. Therefore the multidimensional Stečkin lemma of de la Chevrotière~\cite[Theorem~3.2]{de2009finding} applies to $Q$ at this vector. Hence
\[
Q(t)
\ge
\cos\!\big(d\max_j|t_j|\big)
\ge
\cos\!\left(\frac{\pi d}{q}\right).
\]
Recalling the definition of $Q$ and the identity
\[
\omega
=
\big(e^{i(t_1^0+t_1)},\ldots,e^{i(t_N^0+t_N)}\big),
\]
we obtain
\[
2\frac{|P(\omega)|^2}{\|P\|_{L_\infty(\TT^N)}^2}-1
\ge
\cos\!\left(\frac{\pi d}{q}\right).
\]
Using the elementary identity $(1+\cos\theta)/2=\cos^2(\theta/2)$, this gives
\[
|P(\omega)|
\ge
\|P\|_{L_\infty(\TT^N)}
\cos\!\left(\frac{\pi d}{2q}\right).
\]
Finally, since $\omega\in C_q^N$, we have
\[
\|P\|_{L_\infty(C_q^N)}
\ge
|P(\omega)|
\ge
\|P\|_{L_\infty(\TT^N)}
\cos\!\left(\frac{\pi d}{2q}\right),
\]
which is equivalent to the desired estimate.
\end{proof}

\subsection{Support-sensitive Bohnenblust--Hille inequalities on exact levels}

A first step in the argument is to combine block decompositions with decoupling techniques in order to reduce the problem to multilinear tetrahedral forms. This leads to a Bohnenblust--Hille type estimate for the space
$\mathcal B_{d,q}(\TT^N)$.

\begin{theorem}\label{thm:BH-toric-spherical}
For every $q\ge2$ there exists a constant $c(q)=O(\sqrt q)$ such that, for every
$d,N\in\mathbb N$ and every $P\in\mathcal B_{d,q}(\TT^N)$ of the form
\[
P(z)
=
\sum_{\substack{\alpha\in\{0,\ldots,q-1\}^N\\
|\operatorname{supp}(\alpha)|=d}}
a_\alpha z^\alpha,
\quad z\in\TT^N,
\]
one has
\[
\bigg(
\sum_{\substack{\alpha\in\{0,\ldots,q-1\}^N\\
|\operatorname{supp}(\alpha)|=d}}
|a_\alpha|^{\frac{2d}{d+1}}
\bigg)^{\frac{d+1}{2d}}
\le
c(q)^d
\|P\|_{L_\infty(\TT^N)}.
\]
\end{theorem}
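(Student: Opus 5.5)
The plan follows the strategy sketched in the introduction: decoupling of powers, then a random labelled block partition, then the tetrahedral (multilinear) Bohnenblust--Hille inequality.

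\emph{Step 1: decoupling the powers.} Write each $\alpha$ with $|\operatorname{supp}(\alpha)|=d$ as a choice of $d$ coordinates $j_1<\dots<j_d$ together with exponents $k_1,\dots,k_d\in\{1,\dots,q-1\}$. We introduce independent variables $u_{j,k}\in\TT$, for $j\le N$ and $1\le k\le q-1$, and the polynomial
\[
Q(u)=\sum_{\alpha}a_\alpha\,u_{j_1,k_1}\cdots u_{j_d,k_d}.
\]
This is a polynomial of degree $d$ in $N(q-1)$ variables. It is tetrahedral as a function of the groups $\{u_{j,\cdot}\}$: each group contributes at most one variable to any monomial.

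To compare $\|Q\|_\infty$ with $\|P\|_\infty$, we use a Fourier projection in one auxiliary variable per coordinate. Set $u_{j,k}=\zeta_j^k\,\xi_{j,k}$. One candidate is to substitute $z_j\mapsto \zeta_j$ and average against the phases $\xi$. A cleaner route is to realize each $u_{j,k}$ as $z_j^k$ times a character on an auxiliary group. I would first try the following: for fixed $j$, the map $(z_j^1,\dots,z_j^{q-1})\mapsto(u_{j,1},\dots,u_{j,q-1})$ is obtained by a convex-combination or interpolation identity in the spirit of the Remez lemma above. This should give $\|Q\|_\infty\le K(q)^d\|P\|_\infty$.

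The cost must be only $O(\sqrt q)$ per active coordinate to achieve $c(q)=O(\sqrt q)$. This is where I expect the main obstacle. The natural fallback is to avoid bounding $\|Q\|_\infty$ directly. Instead one uses hypercontractivity: apply the polynomial BH inequality in its $L_2\to$ mixed-norm form (Blei's inequality plus hypercontractive Khintchine on each block). Here each block variable is the one-dimensional trigonometric polynomial $\sum_k c_k z_j^k$ of degree $\le q-1$. Its $L_2$ and $L_\infty$ norms differ by at most $\sqrt q$, which is exactly the source of the factor $\sqrt q$ per block.

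\emph{Step 2: random block decomposition.} Choose a uniformly random labelling $\tau:\{1,\dots,N\}\to\{1,\dots,d\}$ and keep only the monomials whose support meets each block $\tau^{-1}(i)$ exactly once. Each fixed $\alpha$ survives with probability $d!/d^d\ge e^{-d}$. The restricted polynomial $P_\tau$ is obtained from $P$ by the projection onto the terms of multidegree $(1,\dots,1)$ in the block-homogeneity variables. Concretely, we replace $z_j$ by $\theta_{\tau(j)}z_j$ with $\theta_i\in C_{M}$ for a large root of unity and extract the coefficient of $\theta_1\cdots\theta_d$. Since $P$ has support exactly $d$, that coefficient is exactly the set of terms with one active coordinate per block. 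The norm cost must be controlled. Extracting one multidegree pattern via Cauchy estimates on a polydisc of radii $r_i$ costs at most $e^{O(d)}$ (choose $r_i=1$ and use the fact that the restriction to the $\theta$ torus has degree at most $q-1$ in each $\theta_i$ per coordinate; one uses the polarization-type bound for $d$-linear pieces, costing $d^d/d!\le e^d$). Hence $\|P_\tau\|_\infty\le e^{O(d)}\|P\|_\infty$.

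\emph{Step 3: multilinear BH and averaging.} Now $P_\tau(z)=\sum a_\alpha\prod_{i=1}^d g_i$, where the factors are indexed by pairs $(j_i,k_i)$ with $j_i\in\tau^{-1}(i)$. This is a $d$-linear form in the $d$ vectors $(z_j^k)_{j\in\tau^{-1}(i),\,k}$. We apply the multilinear Bohnenblust--Hille inequality with Blei's interpolation. In each step the inner mixed $\ell_2$ norm over one block is bounded by an $L_2$ average over $z$ restricted to that block, using orthogonality of $z_j^k$. The remaining $L_\infty$ norms are bounded via Khintchine/hypercontractivity for polynomials in the block variables, where the per-variable degree is $q-1$ but the degree in the block is $1$ in "coordinates". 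The constant per block is then $C\sqrt q$, the hypercontractive constant for $L_{2d/(d+1)}$ versus $L_2$ of a sum $\sum_j z_j\,h_j(z_j)$ of independent mean-zero terms of degree $\le q-1$. This yields
\[
\Big(\sum_{\alpha\text{ kept by }\tau}|a_\alpha|^{\frac{2d}{d+1}}\Big)^{\frac{d+1}{2d}}\le (C\sqrt q)^d\|P_\tau\|_\infty .
\]
Finally we average $\sum_{\alpha\text{ kept}}|a_\alpha|^{2d/(d+1)}$ over $\tau$. This recovers at least $e^{-d}$ of the full sum, which gives $c(q)=O(\sqrt q)$. The delicate points are the hypercontractive constant in Step 3 and the projection cost in Step 2.
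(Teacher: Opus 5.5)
Your architecture matches the paper's: a random labelled partition with survival probability $d!/d^d$, reduction to a block-full $d$-linear form, the multilinear Bohnenblust--Hille inequality, and averaging. However, the step that produces the factor $O(\sqrt q)$ per block is never established, and neither of your two suggestions supplies it.

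\textbf{Your Step 1 mechanisms do not work.} A convex-combination identity would make $z_j^k\mapsto u_{j,k}$ contractive. But already for a single variable this map has norm equal to the Sidon constant of $\{z,\ldots,z^{q-1}\}$, which is $\gtrsim\sqrt q$. A signed coordinatewise identity (measures $\mu_j$ with $\int w^k\,d\mu_j=u_{j,k}$ and $\mu_j(\TT)=1$) costs $\prod_{j\le N}\|\mu_j\|$. That depends on $N$, because every coordinate of $P$ is involved, if only through $z_j^0$.

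\textbf{The Step~3 fallback does not repair this.} The multilinear BH inequality, and its Blei--hypercontractive proof, bounds the coefficients by the supremum of the $d$-linear form over \emph{all} unimodular block vectors, i.e.\ by the norm of the decoupled polynomial. You only control $\|P_\tau\|_\infty$, the supremum over block vectors of the special form $(z_j^k)_{j,k}$. In the proof this is exactly the $\ell_1$-step $\sum_{(j,k)}|L(\ldots,e_{(j,k)},\ldots)|\le\sup_x|L(\ldots,x,\ldots)|$, where the slot must range over all unimodular vectors. No $L_2$-versus-$L_p$ hypercontractive comparison on the other blocks can replace it.

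\textbf{The missing ingredient} is the one-block Sidon estimate
\[
\sum_{j\in A}\sum_{r=1}^{q-1}|c_{j,r}|\le\pi\sqrt{q-1}\,\sup_{z\in\TT^A}\Big|\sum_{j\in A}\sum_{r=1}^{q-1}c_{j,r}z_j^r\Big| .
\]
Its proof has two parts. The first is Cauchy--Schwarz and Parseval in each variable, which is the only part you mention. The second is a phase-alignment argument giving $\sum_j\|P_j\|_\infty\le\pi\|\sum_jP_j(z_j)\|_\infty$ for the mean-zero $P_j(w)=\sum_r c_{j,r}w^r$. Since $\sup_w\operatorname{Re}(e^{-i\theta}P_j(w))\ge\|P_j\|_\infty\max\{\cos(\alpha_j-\theta),0\}$, averaging over $\theta$ produces one common rotation that works for all $j$ at once.

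The paper lifts this from one block to $d$ blocks through the metric mapping property of the injective tensor norm, using $C(\TT^{A_1})\otimes_\varepsilon\cdots\otimes_\varepsilon C(\TT^{A_d})=C(\TT^N)$. This is legitimate only for the block-full part, which has no constant term in any block; that is why the block projection must precede the decoupling. With the estimate in hand, you could alternatively stay with $P_\tau$ and use it in the $\ell_1$-step with the other blocks frozen. Either way, this lemma is the heart of the proof and is absent from your proposal.

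\textbf{Step~2 is also incorrect as written for $q\ge3$.} After $z_j\mapsto\theta_{\tau(j)}z_j$, the coefficient of $\theta_1\cdots\theta_d$ keeps only monomials whose exponents sum to $1$ in each block. So every block-full monomial with some exponent $k_i\ge2$ is discarded, and averaging over $\tau$ no longer recovers the full coefficient sum. You must keep all monomials with nonzero $\theta_i$-degree for every $i$, namely
\[
P_\tau=\prod_{i=1}^d\bigl(I-E_{\tau^{-1}(i)}\bigr)P,
\]
where $E_A$ averages over the variables in $A$. This projection has norm at most $2^d$. Because $|\operatorname{supp}(\alpha)|=d$, a support meeting every block meets each block exactly once. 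Your polarization and Cauchy-estimate discussion is not needed for this.
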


The important point is that the constant grows exponentially in the support
size $d$, with base depending only on $q$, and not on $N$. This is the feature
needed in applications to spherical polynomial spaces on the cyclic group
$C_q^N$.

The proof will be given in four steps. We first prove a local finite-dimensional estimate based on a decoupling
procedure. Roughly speaking, it shows that the finite alphabet
$z,z^2,\ldots,z^{q-1}$ can be replaced by independent variables
$y_1,\ldots,y_{q-1}$ at a cost depending only on $q$.

\begin{lemma}\label{lem:local-cloning-q}
For every $q\ge 2$ there exists a constant $k(q)=O(\sqrt q)$ such that,
for every finite set $A$, the linear map
\[
U_A:\operatorname{span}\{z_i^r: i\in A,\ 1\le r\le q-1\}
\to
\operatorname{span}\{y_{i,r}: i\in A,\ 1\le r\le q-1\},
\qquad U_A(z_i^r)=y_{i,r},
\]
is well defined and has norm at most $k(q)$, when the domain and range are endowed with the
supremum norms inherited from $C(\mathbb T^A)$ and
$C(\mathbb T^{A\times\{1,\ldots,q-1\}})$, respectively. 
\end{lemma}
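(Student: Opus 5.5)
The plan is to compute both norms explicitly. The right-hand side is just an $\ell_1$ norm. The left-hand side splits into one-variable pieces, and each piece costs at most a factor $\sqrt{q-1}$ via Cauchy--Schwarz and Parseval. Well-definedness is immediate: the monomials $z_i^r$ ($i\in A$, $1\le r\le q-1$) are distinct characters of $\mathbb T^A$, hence linearly independent, so prescribing $U_A$ on them defines a linear map.

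Write a general element of the domain as
\[
f(z)=\sum_{i\in A}p_i(z_i),\qquad p_i(w)=\sum_{r=1}^{q-1}c_{i,r}w^r .
\]
Its image $\sum_{i,r}c_{i,r}y_{i,r}$ is a linear form in independent unimodular variables. Hence its supremum norm equals $\sum_{i\in A}\sum_{r}|c_{i,r}|$, since we may choose each $y_{i,r}$ to align the phase of $c_{i,r}$. For each $i$, Cauchy--Schwarz and Parseval on $\mathbb T$ give
\[
\sum_{r}|c_{i,r}|\le\sqrt{q-1}\,\|p_i\|_{L_2(\mathbb T)}\le\sqrt{q-1}\,\|p_i\|_{L_\infty(\mathbb T)} .
\]
It therefore suffices to prove the lower bound
\[
\|f\|_{L_\infty(\mathbb T^A)}\ \ge\ \frac1\pi\sum_{i\in A}\|p_i\|_{L_\infty(\mathbb T)},
\]
which yields the lemma with $k(q)=\pi\sqrt{q-1}=O(\sqrt q)$.

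This lower bound is the main step, because the pieces $p_i$ may have incompatible phases. Since the variables $z_i$ are independent, for every $\theta\in\mathbb R$ we have
\[
\|f\|_\infty\ \ge\ \sup_{z}\operatorname{Re}\big(e^{i\theta}f(z)\big)=\sum_{i\in A}\max_{w\in\mathbb T}\operatorname{Re}\big(e^{i\theta}p_i(w)\big).
\]
Each $p_i$ has no constant term, so $\operatorname{Re}(e^{i\theta}p_i)$ has mean zero on $\mathbb T$ and its maximum is $\ge0$. Fix $w_i^*$ with $|p_i(w_i^*)|=\|p_i\|_\infty$ and write $p_i(w_i^*)=\|p_i\|_\infty e^{i\varphi_i}$. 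Then
\[
\max_{w}\operatorname{Re}\big(e^{i\theta}p_i(w)\big)\ \ge\ \|p_i\|_\infty\,\max\{\cos(\theta+\varphi_i),0\}.
\]

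The supremum over $\theta$ is at least the average over $\theta$ uniform on $[0,2\pi]$. We have $\mathbb E_\theta\max\{\cos(\theta+\varphi_i),0\}=1/\pi$ for every $i$, independently of $\varphi_i$. Averaging therefore gives $\|f\|_\infty\ge\pi^{-1}\sum_i\|p_i\|_\infty$, which completes the argument.
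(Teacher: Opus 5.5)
Your proposal is correct and follows the paper's proof essentially step for step. You compute the image norm as $\sum_{i,r}|c_{i,r}|$, bound each block by $\sqrt{q-1}\,\|p_i\|_{L_\infty(\mathbb T)}$ via Cauchy--Schwarz and Parseval, and obtain $\sum_i\|p_i\|_\infty\le\pi\|f\|_\infty$ by averaging the phase-rotated suprema $\max_w\operatorname{Re}(e^{i\theta}p_i(w))\ge\|p_i\|_\infty\max\{\cos(\theta+\varphi_i),0\}$ over $\theta$; this gives the same constant $k(q)=\pi\sqrt{q-1}$. Your explicit justification of well-definedness via linear independence of the characters is a small addition the paper leaves implicit.
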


% \begin{lemma}\label{lem:local-cloning-q}
% For every $q\ge 2$ there exists a constant $k(q)=O(\sqrt q)$ such that, for every
% finite set $A$, the operator
% \[
% U_A:
% \operatorname{span}\{z_i^r: i\in A,\ 1\le r\le q-1\}
% \longrightarrow
% \operatorname{span}\{y_{i,r}: i\in A,\ 1\le r\le q-1\},
% \qquad
% U_A(z_i^r)=y_{i,r},
% \]
% is well defined and satisfies $\|U_A\|\le k(q)$.
% \end{lemma}

\begin{proof}
Let
\[
f(z)=\sum_{i\in A}\sum_{r=1}^{q-1} c_{i,r} z_i^r, \qquad z \in \mathbb T^A\, .
\]
For each $i\in A$, write
\[
P_i(w)=\sum_{r=1}^{q-1} c_{i,r}w^r,
\qquad w\in\mathbb T.
\]
Then
\[
f(z)=\sum_{i\in A}P_i(z_i).
\]
We now compare $\sum_i \|P_i\|_{L^\infty(\mathbb T)}$ with $\|f\|_\infty$. For each
$\theta\in[0,2\pi]$, define
\[
h_i(\theta)
:=
\sup_{w\in\mathbb T}
\operatorname{Re}\big(e^{-i\theta}P_i(w)\big).
\]
Since \(P_i\) has no constant term, the function
\[
g_\theta(w):=\operatorname{Re}(e^{-i\theta}P_i(w)),
\qquad w\in\mathbb T.
\]
has mean zero on \(\mathbb T\). Hence
\[
h_i(\theta)=\sup_{w\in\mathbb T} g_\theta(w)\ge 0,
\qquad \theta \in [0, 2\pi].
\]

Choose $w_i^0\in\mathbb T$ such that
\[
|P_i(w_i^0)|=\|P_i\|_{L^\infty(\mathbb T)}.
\]
Write
\[
P_i(w_i^0)=\|P_i\|_{L^\infty(\mathbb T)} e^{i\alpha_i}.
\]
Then for all $\theta\in[0,2\pi]$
\[
h_i(\theta)
\ge
\operatorname{Re}\big(e^{-i\theta}P_i(w_i^0)\big)
= \|P_i\|_{L^\infty(\mathbb T)}
\cos(\alpha_i-\theta).
\]
Since $h_i(\theta)\ge 0$, it follows that
\[
h_i(\theta)
\ge
\|P_i\|_{L^\infty(\mathbb T)}\max\{\cos(\alpha_i-\theta),0\}.
\]
Averaging with respect to $\theta$, we obtain
\[
\frac{1}{2\pi}\int_0^{2\pi} h_i(\theta)\,d\theta
\ge
\|P_i\|_{L^\infty(\mathbb T)}
\frac{1}{2\pi}\int_0^{2\pi}\max\{\cos t,0\}\,dt
=
\frac{1}{\pi} \|P_i\|_{L^\infty(\mathbb T)}.
\]
Therefore
\[
\frac{1}{2\pi}\int_0^{2\pi}\sum_{i\in A}h_i(\theta)\,d\theta
\ge
\frac{1}{\pi}\sum_{i\in A}\|P_i\|_{L^\infty(\mathbb T)}.
\]
Hence there exists $\theta_0\in[0,2\pi]$ such that
\[
\sum_{i\in A}h_i(\theta_0)
\ge
\frac{1}{\pi}\sum_{i\in A}\|P_i\|_{L^\infty(\mathbb T)}.
\]
For this $\theta_0$, choose $z_i\in\mathbb T$ such that
\[
\operatorname{Re}\big(e^{-i\theta_0}P_i(z_i)\big)
=
h_i(\theta_0).
\]
Then
\[
\|f\|_{L^\infty(\mathbb T^A)}
\ge
|f(z)|
\ge
\operatorname{Re}\big(e^{-i\theta_0}f(z)\big)
=
\sum_{i\in A}h_i(\theta_0)
\ge
\frac{1}{\pi}\sum_{i\in A}\|P_i\|_{L^\infty(\mathbb T)}.
\]
Thus
\[
\sum_{i\in A}\|P_i\|_{L^\infty(\mathbb T)}
\le
\pi \|f\|_{L^\infty(\mathbb T^A)}.
\]
On the other hand, by Cauchy's inequality and Parseval's identity,
\[
\sum_{r=1}^{q-1}|c_{i,r}|
\le
\sqrt{q-1}
\left(\sum_{r=1}^{q-1}|c_{i,r}|^2\right)^{1/2}
=
\sqrt{q-1}\,\|P_i\|_{L^2(\mathbb T)}
\le
\sqrt{q-1}\,\|P_i\|_{L^\infty(\mathbb T)}.
\]
Combining both estimates, we get
\[
\sum_{i\in A}\sum_{r=1}^{q-1}|c_{i,r}|
\le
\sqrt{q-1}\sum_{i\in A}\|P_i\|_{L^\infty(\mathbb T)}
\le
\pi\sqrt{q-1}\,\|f\|_{L^\infty(\mathbb T^A)}.
\]
Finally,
\[
\|U_A f\|_{L^\infty(\mathbb T^{A\times\{1,\ldots,q-1\}})}
=
\sup_{|y_{i,r}|=1}
\left|
\sum_{i\in A}\sum_{r=1}^{q-1} c_{i,r}y_{i,r}
\right|
=
\sum_{i\in A}\sum_{r=1}^{q-1}|c_{i,r}|.
\]
Therefore
\[
\|U_A f\|_\infty
\le
\pi\sqrt{q-1}\,\|f\|_\infty.
\]
The proof is complete.
\end{proof}

\medskip

\begin{remark}
The estimate in Lemma~\ref{lem:local-cloning-q} is sharp in its dependence on \(q\), up to a
universal constant. Indeed, if \(A\neq\emptyset\), then, fixing
\(i_0\in A\) and restricting \(U_A\) to
\(\operatorname{span}\{z_{i_0}^r:1\le r\le q-1\}\), we get
\[
 \|U_A\|
 \ge
 \sup_{(c_r)\neq 0}
 \frac{\sum_{r=1}^{q-1}|c_r|}
 {\left\|\sum_{r=1}^{q-1} c_r z^r\right\|_{L_\infty(\mathbb T)}} .
\]
The quantity on the right is the Sidon constant of the system
\(\{z,z^2,\ldots,z^{q-1}\}\) on \(\mathbb T\), which is known to be bounded
from below by \(c\sqrt q\) with a universal constant \(c>0\). Hence
\[
 \|U_A\|\ge c\sqrt q
\]
for every non-empty finite set \(A\). Together with  Lemma~\ref{lem:local-cloning-q}, this shows
that \(\|U_A\|\asymp \sqrt q\), uniformly in \(A\neq\emptyset\).
\end{remark}

We next isolate the monomials whose support meets every block of a labelled
partition. For fixed $d,q,N$, we shall use the notation
\[
\Lambda_{d,q,N}
:=
\big\{
\alpha\in\{0,\ldots,q-1\}^N\colon
|\operatorname{supp}(\alpha)|=d
\big\}.
\]

\begin{lemma}\label{lem:block-full-projection-q}
Let $[N]=A_1\dot\cup\cdots\dot\cup A_d$ be a disjoint 
partition into $d$ labelled,
possibly empty, blocks. Let $P\in \mathcal B_{d,q}(\TT^N)$ be given by
\[
P(z)
=
\sum_{\alpha\in\Lambda_{d,q,N}} a_\alpha z^\alpha,
\quad z\in\TT^N.
\]
Let $P_A$ denote the part of $P$ consisting of those monomials whose support
meets every block $A_1,\ldots,A_d$. Then
\[
 \|P_A\|_{L_\infty(\TT^N)}
 \le
 2^d
 \|P\|_{L_\infty(\TT^N)}.
\]
Moreover, every monomial appearing in $P_A$ meets each block in exactly one
coordinate.
\end{lemma}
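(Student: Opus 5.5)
The "moreover" part is pure counting, so I dispose of it first. Each monomial in $P$ has support of size exactly $d$. If this support meets all $d$ pairwise disjoint blocks $A_1,\dots,A_d$, it must contain at least one coordinate from each block. Since there are only $d$ coordinates to distribute, it contains exactly one from each.

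For the norm bound, I will write $P_A$ as an average of rotations of $P$ and use inclusion--exclusion over blocks. For a subset $J\subseteq\{1,\dots,d\}$, let $P^{(J)}$ denote the part of $P$ consisting of monomials whose support is disjoint from $\bigcup_{j\in J}A_j$. This part is obtained by substituting $z_i=0$ for every $i\in\bigcup_{j\in J}A_j$. Equivalently, it is the average over $\zeta$ of $P$ evaluated at the point where those coordinates are multiplied by independent uniform rotations $\zeta_i\in\TT$. The monomials surviving this average are exactly those with $\alpha_i=0$ on these coordinates, since $\alpha_i\in\{1,\dots,q-1\}$ otherwise. I will use the averaging description rather than the substitution, because averaging over Haar measure is a contraction for the supremum norm. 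This gives
\[
\|P^{(J)}\|_{L_\infty(\TT^N)}\le\|P\|_{L_\infty(\TT^N)}.
\]
By inclusion--exclusion on the events ``the support misses $A_j$'',
\[
P_A=\sum_{J\subseteq\{1,\dots,d\}}(-1)^{|J|}P^{(J)},
\]
since a monomial meeting every block is counted only for $J=\emptyset$. A monomial missing some nonempty set $M$ of blocks contributes $\sum_{J\subseteq M}(-1)^{|J|}=0$.

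The triangle inequality then gives $\|P_A\|_\infty\le 2^d\|P\|_\infty$. If the blocks are allowed to be empty, the same formula still applies. An empty block $A_j$ makes $P_A=0$, and the identity remains valid because $P^{(J)}=P^{(J\setminus\{j\})}$ in that case.

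No step here seems genuinely hard. The only point to check carefully is that coordinatewise averaging exactly kills every monomial touching an averaged coordinate. This holds because all exponents lie in $\{0,\dots,q-1\}$ and are nonnegative, so no cancellation issues arise.
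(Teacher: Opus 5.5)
Your proposal is correct and is essentially the paper's argument. Your alternating sum $\sum_{J}(-1)^{|J|}P^{(J)}$ is exactly the expansion of the paper's product $(I-E_{A_1})\cdots(I-E_{A_d})P$ of commuting coordinate averages. Bounding its $2^d$ contractive terms by the triangle inequality is the same as the paper's bound $\|I-E_{A_k}\|\le 2$ on each factor. The counting for the ``moreover'' part also matches the paper.
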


\begin{proof}
For each block $A_k$, let $E_{A_k}$ denote the conditional expectation obtained
by averaging over the variables in $A_k$. Equivalently, $E_{A_k}$ is the
Fourier projection onto the monomials which do not involve variables from
$A_k$. Hence $I-E_{A_k}$ is the projection onto the monomials which involve at
least one variable from $A_k$.

With the convention that $E_\varnothing=I$, we have
\[
 P_A
 =
 (I-E_{A_1})\cdots(I-E_{A_d})P.
\]
Indeed, the projections $E_{A_k}$ commute, and the product above keeps exactly
those monomials whose support meets every block of the partition. Since each
$E_{A_k}$ is contractive on $L_\infty(\TT^N)$,
\[
\|I-E_{A_k}\|_{L_\infty\to L_\infty}
\le
1+\|E_{A_k}\|
\le
2.
\]
Therefore
\[
\|P_A\|_{L_\infty(\TT^N)}
\le
2^d\|P\|_{L_\infty(\TT^N)}.
\]

Finally, every monomial in $P$ has support of cardinality exactly $d$. If such
a monomial meets all $d$ blocks, then it must meet each block in exactly one
coordinate.
\end{proof}

We now combine the decoupling estimate above with the block-full structure.

\begin{lemma}\label{lem:blockwise-cloning-q}
Let
\[
[N]=A_1\dot\cup\cdots\dot\cup A_d
\]
be a labelled disjoint partition. Suppose that
\[
P_A(z)
=
\sum_{\substack{i_1\in A_1,\ldots,i_d\in A_d}}
\sum_{r\in\{1,\ldots,q-1\}^d}
b_{i_1,\ldots,i_d;r}\,
z_{i_1}^{r_1}\cdots z_{i_d}^{r_d},
\qquad z\in\TT^N.
\]
Define the decoupled polynomial
\[
\widetilde P_A(y)
=
\sum_{\substack{i_1\in A_1,\ldots,i_d\in A_d}}
\sum_{r\in\{1,\ldots,q-1\}^d}
b_{i_1,\ldots,i_d;r}\,
y_{i_1,r_1}\cdots y_{i_d,r_d}.
\]
Then
\[
\|\widetilde P_A\|_{L_\infty}
\le
k(q)^d\,\|P_A\|_{L_\infty(\TT^N)},
\]
where $k(q)=O(\sqrt q)$ is the constant from
Lemma~\ref{lem:local-cloning-q}. Here the $L_\infty$-norm of
$\widetilde P_A$ is taken over the corresponding product torus in the
variables $y_{i,r}$.
\end{lemma}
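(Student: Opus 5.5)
The plan is to decouple one block at a time. We replace the variables of $A_1$ by the independent variables $y_{i,r}$, then those of $A_2$, and so on. Each step costs a factor $k(q)$, coming from Lemma~\ref{lem:local-cloning-q}, applied with all remaining variables frozen.

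For $0\le k\le d$, define the hybrid polynomial $Q_k$ as follows. The variables indexed by $A_1,\dots,A_k$ are decoupled, so $z_{i_j}^{r_j}$ is replaced by $y_{i_j,r_j}$ for $j\le k$. The variables in $A_{k+1},\dots,A_d$ remain as powers $z_{i_j}^{r_j}$. Thus $Q_0=P_A$ and $Q_d=\widetilde P_A$, and the claim follows once we show
\[
\|Q_{k}\|_{L_\infty}\le k(q)\,\|Q_{k-1}\|_{L_\infty}.
\]

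To prove this, fix values of all variables other than those attached to $A_k$. These are the $y_{i,r}$ with $i\in A_1\cup\dots\cup A_{k-1}$ and the $z_i$ with $i\in A_{k+1}\cup\dots\cup A_d$, all on the unit circle. Because every monomial of $P_A$ meets $A_k$ in exactly one coordinate (Lemma~\ref{lem:block-full-projection-q}), $Q_{k-1}$ then becomes a function of $(z_i)_{i\in A_k}$ of the form
\[
f(z)=\sum_{i\in A_k}\sum_{r=1}^{q-1}c_{i,r}z_i^r .
\]
Here the $c_{i,r}$ depend on the frozen variables, and the expression has no constant term. The corresponding frozen version of $Q_k$ is exactly $U_{A_k}f=\sum c_{i,r}y_{i,r}$.

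Lemma~\ref{lem:local-cloning-q} then gives
\[
\sup_{y}|U_{A_k}f|\le k(q)\sup_{z\in\TT^{A_k}}|f(z)|\le k(q)\|Q_{k-1}\|_{L_\infty}.
\]
Taking the supremum over the frozen variables yields the one-step inequality, and iterating it $d$ times gives the factor $k(q)^d$.

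If some block is empty, $P_A=0$ and the claim is trivial.

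The only point that needs care is that the frozen slice really lies in the domain of $U_{A_k}$, that is, it is linear in the block-$A_k$ monomials with exponents between $1$ and $q-1$. This is exactly the ``exactly one coordinate per block'' structure, together with the fact that exponents lie in $\{1,\dots,q-1\}$ on the support.
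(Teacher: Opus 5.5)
Your proof is correct. It reaches the bound by a more elementary route than the paper's.

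The paper's argument runs as follows:
\begin{itemize}
\item It views $P_A$ as an element of $X_{A_1}^{(q)}\otimes\cdots\otimes X_{A_d}^{(q)}$, equipped with the injective tensor norm.
\item It identifies this tensor norm with the supremum norm on $\TT^N$, using the injectivity of $\varepsilon$ together with $C(\TT^{A_1})\widehat{\otimes}_\varepsilon\cdots\widehat{\otimes}_\varepsilon C(\TT^{A_d})=C(\TT^N)$. It does the same for the $y$-variables.
\item It then obtains the factor $\prod_k\|U_{A_k}\|$ in one step, from the metric mapping property applied to $U_{A_1}\otimes\cdots\otimes U_{A_d}$.
\end{itemize}

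Your hybrid chain $Q_0=P_A,\ldots,Q_d=\widetilde P_A$ applies $U_{A_k}$ to one block at a time, with all other variables frozen. This is exactly what the metric mapping property amounts to for $C(K)$-spaces, where an operator of the form $T\otimes\mathrm{id}$ acts slice by slice, so nothing is lost. Your version is self-contained and needs no tensor-product background. It also makes visible why each slice has no constant term, as required by Lemma~\ref{lem:local-cloning-q}: every slice inherits exponents $r\ge 1$ on its active block variable. The paper's formulation is shorter, and it shows at once that the same bound holds for any family of operators between subspaces of $C(K)$-spaces.

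One minor point: the one-coordinate-per-block structure you rely on is already part of the hypothesis, namely the displayed form of $P_A$. You therefore do not need to invoke Lemma~\ref{lem:block-full-projection-q} inside this proof.
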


\begin{proof}
If one of the blocks $A_k$ is empty, then $\widetilde P_A=0$, and there is nothing to
prove. We may therefore assume that all blocks are non-empty.

For each block $A_k$, set
\[
 X_{A_k}^{(q)}
 :=
 \operatorname{span}\{z_i^r\colon i\in A_k,\ 1\le r\le q-1\}
 \subset C(\TT^{A_k})
\]
and
\[
 Y_{A_k}^{(q)}
 :=
 \operatorname{span}\{y_{i,r}\colon i\in A_k,\ 1\le r\le q-1\}
 \subset C(\TT^{A_k\times\{1,\ldots,q-1\}}).
\]
By Lemma~\ref{lem:local-cloning-q}, the map
\[
 U_{A_k}\colon X_{A_k}^{(q)}\to Y_{A_k}^{(q)},
 \quad
 U_{A_k}(z_i^r)=y_{i,r},
\]
satisfies
\[
 \|U_{A_k}\|\le k(q).
\]

Since $P_A$ contains exactly one active variable from each block, it belongs to
the algebraic tensor product
\[
 X_{A_1}^{(q)}
 \otimes\cdots\otimes
 X_{A_d}^{(q)}.
\]
We equip this tensor product with the injective tensor norm. Recall that, for
Banach spaces $X_1,\ldots,X_d$, the injective tensor norm on
$X_1\otimes\cdots\otimes X_d$ is defined by
\[
\varepsilon(u)
=
\sup\big\{
\big|
(x_1^*\otimes\cdots\otimes x_d^*)(u)
\big|
\colon
x_j^*\in B_{X_j^*}
\big\}.
\]
We refer to \cite{defant1992tensor} for background on injective tensor
products and their relation with $C(K)$-spaces.

The canonical inclusions
\[
 X_{A_k}^{(q)}\hookrightarrow C(\TT^{A_k})
\]
are isometries. Hence, by the injectivity of the injective tensor norm, the
induced map
\[
 X_{A_1}^{(q)}\otimes_\varepsilon\cdots
 \otimes_\varepsilon X_{A_d}^{(q)}
 \hookrightarrow
 C(\TT^{A_1})\otimes_\varepsilon\cdots
 \otimes_\varepsilon C(\TT^{A_d})
\]
is also an isometry. Using the standard isometric identification
\[
 C(\TT^{A_1})\widehat{\otimes}_\varepsilon\cdots
 \widehat{\otimes}_\varepsilon C(\TT^{A_d})
 =
 C(\TT^{A_1}\times\cdots\times\TT^{A_d}),
\]
we obtain
\[
 \|P_A\|_{X_{A_1}^{(q)}\otimes_\varepsilon\cdots
 \otimes_\varepsilon X_{A_d}^{(q)}}
 =
 \|P_A\|_{L_\infty(\TT^{A_1}\times\cdots\times\TT^{A_d})}.
\]
Under the natural identification
\[
 \TT^{A_1}\times\cdots\times\TT^{A_d}
 =
 \TT^N,
\]
the right-hand side is $\|P_A\|_{L_\infty(\TT^N)}$.

Similarly, the canonical inclusions
\[
 Y_{A_k}^{(q)}
 \hookrightarrow
 C(\TT^{A_k\times\{1,\ldots,q-1\}})
\]
are isometries, and therefore the injective tensor norm on
\[
 Y_{A_1}^{(q)}\otimes\cdots\otimes Y_{A_d}^{(q)}
\]
coincides with the corresponding supremum norm on the product torus. Moreover,
\[
 \widetilde P_A
= ( U_{A_1}\otimes\cdots\otimes U_{A_d} ) P_A.
\]
By the metric mapping property of the injective tensor norm,
\[
 \|\widetilde P_A\|_{L_\infty}
 \le
 \prod_{k=1}^d\|U_{A_k}\|\,
 \|P_A\|_{L_\infty(\TT^N)}
 \le
 k(q)^d\|P_A\|_{L_\infty(\TT^N)}.
\]
This completes the proof.
\end{proof}

We now combine the preceding ingredients to prove the spherical
Bohnenblust--Hille inequality.

\begin{proof}[Proof of Theorem~\ref{thm:BH-toric-spherical}]
Set $p_d:=\frac{2d}{d+1}$. Let $P\in \mathcal B_{d,q}(\TT^N)$ be given by
\[
 P(z)
 =
 \sum_{\alpha\in\Lambda_{d,q,N}} a_\alpha z^\alpha,
 \quad z\in\TT^N.
\]

Fix a labelled partition
\[
 [N]=A_1\dot\cup\cdots\dot\cup A_d.
\]
Let $P_A$ be the block-full part of $P$, that is, the sum of all monomials
whose support meets every block of the partition. By
Lemma~\ref{lem:block-full-projection-q},
\[
 \|P_A\|_\infty
 \le
 2^d\|P\|_\infty.
\]
Moreover, every monomial in $P_A$ meets each block in exactly one coordinate.
Hence $P_A$ can be written in the form
\[
 P_A(z)
 =
 \sum_{\substack{i_1\in A_1,\ldots,i_d\in A_d}}
 \sum_{r\in\{1,\ldots,q-1\}^d}
 b_{i_1,\ldots,i_d;r}
 z_{i_1}^{r_1}\cdots z_{i_d}^{r_d},
 \quad z\in\TT^N.
\]
Here the coefficients $b_{i_1,\ldots,i_d;r}$ are precisely the coefficients
$a_\alpha$ whose support is block-full, written according to the labelled
blocks.

Define the cloned polynomial
\[
 \widetilde P_A(y)
 =
 \sum_{\substack{i_1\in A_1,\ldots,i_d\in A_d}}
 \sum_{r\in\{1,\ldots,q-1\}^d}
 b_{i_1,\ldots,i_d;r}
 y_{i_1,r_1}\cdots y_{i_d,r_d}.
\]
By Lemma~\ref{lem:blockwise-cloning-q},
\[
 \|\widetilde P_A\|_\infty
 \le
 k(q)^d\|P_A\|_\infty
 \le
 (2k(q))^d\|P\|_\infty.
\]
The polynomial $\widetilde P_A$ is tetrahedral and $d$-linear with respect to
the $d$ labelled blocks of variables. Therefore the multilinear
Bohnenblust--Hille inequality gives
\[
 \bigg(
 \sum_{\substack{i_1\in A_1,\ldots,i_d\in A_d}}
 \sum_{r\in\{1,\ldots,q-1\}^d}
 |b_{i_1,\ldots,i_d;r}|^{p_d}
 \bigg)^{1/p_d}
 \le
 \mathrm{BH}_d\,
 \|\widetilde P_A\|_\infty.
\]
Consequently,
\begin{equation}\label{eq:BH-A}
 \bigg(
 \sum_{\substack{i_1\in A_1,\ldots,i_d\in A_d}}
 \sum_{r\in\{1,\ldots,q-1\}^d}
 |b_{i_1,\ldots,i_d;r}|^{p_d}
 \bigg)^{1/p_d}
 \le
 \mathrm{BH}_d(2k(q))^d
 \|P\|_\infty.
\end{equation}

We now choose the labelled partition randomly. Let $\Omega:=\{1,\ldots,d\}^{[N]}$ and equip it with the 
uniform probability measure. Thus each $\omega\in\Omega$ is a map
\[
 \omega\colon [N]\to\{1,\ldots,d\}.
\]
For $\omega\in\Omega$, define
\[
 A_k(\omega):=\{i\in[N]\colon \omega(i)=k\},
 \quad k=1,\ldots,d.
\]
Then
\[
 A(\omega):=(A_1(\omega),\ldots,A_d(\omega))
\]
is a labelled partition of $[N]$. Equivalently, the random choice of $\omega$
assigns each coordinate $i\in[N]$, independently of all other coordinates, to
one of the $d$ labelled blocks, each with probability $1/d$.

Let $S\subset[N]$ be fixed with $|S|=d$. Restricting $\omega$ to $S$, there
are $d^d$ possible maps
\[
 \omega|_S\colon S\to\{1,\ldots,d\},
\]
and all of them occur with the same probability. The set $S$ meets every block
of the partition $A(\omega)$ if and only if
\[
 S\cap A_k(\omega)\neq\varnothing
 \quad\text{for every } k=1,\ldots,d.
\]
Since $|S|=d$, this is equivalent to saying that $S$ meets every block exactly
once. In terms of $\omega|_S$, this means precisely that
\[
 \omega|_S\colon S\to\{1,\ldots,d\}
\]
is a bijection. There are $d!$ such bijections. Hence
\[
 \mathbb P\big(
 S\text{ meets every block of }A(\omega)
 \big)
 =
 \frac{d!}{d^d}.
\]

For fixed $\alpha\in\Lambda_{d,q,N}$, define
\[
 \mathbf 1_\alpha(\omega)
 :=
 \mathbf 1_{\{\operatorname{supp}(\alpha)
      \text{ meets every block of }A(\omega)\}}.
\]
Applying the computation above to $S=\operatorname{supp}(\alpha)$, we obtain
\[
 \mathbb E_\omega \mathbf 1_\alpha(\omega)
 =
 \frac{d!}{d^d}.
\]

Now define
\[
 X(\omega)
 :=
 \sum_{\alpha\in\Lambda_{d,q,N}}
 \mathbf 1_\alpha(\omega)|a_\alpha|^{p_d}.
\]
Thus $X(\omega)$ is the $p_d$-coefficient mass captured by the labelled
partition $A(\omega)$. By linearity of expectation,
\[
\begin{aligned}
 \mathbb E_\omega X(\omega)
 =
 \mathbb E_\omega
 \sum_{\alpha\in\Lambda_{d,q,N}}
 \mathbf 1_\alpha(\omega)|a_\alpha|^{p_d} &=
 \sum_{\alpha\in\Lambda_{d,q,N}}
 |a_\alpha|^{p_d}
 \mathbb E_\omega \mathbf 1_\alpha(\omega) =
 \frac{d!}{d^d}
 \sum_{\alpha\in\Lambda_{d,q,N}}
 |a_\alpha|^{p_d}.
\end{aligned}
\]
Consequently, there exists $\omega_0\in\Omega$ such that
\[
 X(\omega_0)\ge \mathbb E_\omega X(\omega).
\]
For the corresponding labelled partition
\[
 A(\omega_0)=(A_1(\omega_0),\ldots,A_d(\omega_0)),
\]
which we simply denote by
\[
 [N]=A_1\dot\cup\cdots\dot\cup A_d,
\]
we obtain
\[
 \sum_{\substack{\alpha\in\Lambda_{d,q,N}\\
 \operatorname{supp}(\alpha)\text{ meets every block}}}
 |a_\alpha|^{p_d}
 \ge
 \frac{d!}{d^d}
 \sum_{\alpha\in\Lambda_{d,q,N}}
 |a_\alpha|^{p_d}.
\]
For this partition, the estimate \eqref{eq:BH-A} gives
\[
 \bigg(
 \frac{d!}{d^d}
 \sum_{\alpha\in\Lambda_{d,q,N}}
 |a_\alpha|^{p_d}
 \bigg)^{1/p_d}
 \le
 \mathrm{BH}_d(2k(q))^d
 \|P\|_\infty.
\]
Therefore
\[
 \bigg(
 \sum_{\alpha\in\Lambda_{d,q,N}}
 |a_\alpha|^{p_d}
 \bigg)^{1/p_d}
 \le
 \left(\frac{d^d}{d!}\right)^{1/p_d}
 \mathrm{BH}_d(2k(q))^d
 \|P\|_\infty.
\]

It remains to estimate the constants. Since $p_d=\frac{2d}{d+1}\ge1$, we have
\[
 \left(\frac{d^d}{d!}\right)^{1/p_d}
 \le
 \frac{d^d}{d!}
 \le
 e^d,
\]
where the last inequality follows from $d!\ge(d/e)^d$. Moreover, the
multilinear Bohnenblust--Hille constants are known to have subexponential
growth, and in fact even polynomial growth; see
\cite[Proposition~6.18]{defant2019dirichlet}. Polynomial estimates of this
type were first obtained by Bayart, Pellegrino, and Seoane-Sep\'ulveda
\cite{bayart2014bohr}, and were already implicit in earlier ideas from
\cite{defant2010coordinatewise}. In particular, there exists an absolute
constant $B\ge1$ such that, for every $d\ge1$,
\[
 \mathrm{BH}_d\le B^d.
\]
Thus
\[
 \bigg(
 \sum_{\alpha\in\Lambda_{d,q,N}}
 |a_\alpha|^{p_d}
 \bigg)^{1/p_d}
 \le
 \big(2ek(q)B\big)^d
 \|P\|_\infty.
\]
The assertion follows with $c(q):=2ek(q)B=O(\sqrt{q})$.
\end{proof}

\medskip

Note that the finite alphabet $\{1,\ldots,q-1\}$ enters
only through the local decoupling constant $k(q)$, and therefore affects only the
base $c(q)$, not the exponential dependence on $d$. This is the mechanism that
prevents the factorial loss which would arise from full symmetrization.

Combining Theorem~\ref{thm:BH-toric-spherical} with
Theorem~\ref{thm:remez-support}, we obtain a Bohnenblust--Hille inequality
for the spherical space of level $d$ on the Hamming scheme.

\begin{theorem}\label{BH cyclic}
For every $q\ge 2$ there is a constant
$D(q)=O\!\big(\sqrt q\,\log q\big)$ such that, for every
$d,N\in\mathbb N$ and every function
\[
f
=
\sum_{\substack{\bm m\in\mathbb Z_q^N\\
|\operatorname{supp}(\bm m)|= d}}
\widehat f(\bm m)\chi_{\bm m}
\in
\mathcal B_d(C_q^N),
\]
one has
\[
 \Bigg(
 \sum_{\substack{\bm m\in\mathbb Z_q^N\\
|\operatorname{supp}(\bm m)|= d}}
 |\widehat f(\bm m)|^{\frac{2d}{d+1}}
 \Bigg)^{\frac{d+1}{2d}}
 \le
 D(q)^d
 \|f\|_{L_\infty(C_q^N)}.
\]
Moreover, the exponent $\frac{2d}{d+1}$ is optimal (see Remark~\ref{optimal exponent}).
\end{theorem}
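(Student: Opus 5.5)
The plan is to transfer Theorem~\ref{thm:BH-toric-spherical} from the torus to the Hamming scheme using the support-sensitive Remez inequality, Theorem~\ref{thm:remez-support}.

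Given $f\in\mathcal B_d(C_q^N)$, we take its canonical toric extension
\[
P(z)=\sum_{\substack{\bm m\in\mathbb Z_q^N\\ |\operatorname{supp}(\bm m)|=d}}\widehat f(\bm m)\,z^{\bm m},\qquad z\in\TT^N.
\]
Here we identify $\bm m$ with $\alpha\in\{0,\ldots,q-1\}^N$. This identification is a bijection, and it preserves supports, so $P\in\mathcal B_{d,q}(\TT^N)\subset\mathcal B_{\le d,q}(\TT^N)$. Since $C_q\subset\TT$ and $\chi_{\bm m}(x)=x^{\bm m}$, the restriction of $P$ to $C_q^N$ is exactly $f$. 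By uniqueness of the Fourier expansion on $C_q^N$, the coefficients $a_\alpha$ of $P$ equal $\widehat f(\bm m)$.

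The chain of estimates is then short. Theorem~\ref{thm:BH-toric-spherical} gives
\[
\Big(\sum|\widehat f(\bm m)|^{\frac{2d}{d+1}}\Big)^{\frac{d+1}{2d}}\le c(q)^d\|P\|_{L_\infty(\TT^N)}.
\]
Theorem~\ref{thm:remez-support}, applied with level $d$, gives
\[
\|P\|_{L_\infty(\TT^N)}\le (C\log q)^d\|P\|_{L_\infty(C_q^N)}=(C\log q)^d\|f\|_{L_\infty(C_q^N)}.
\]
Combining the two, we set $D(q):=c(q)\,C\log q$. Since $c(q)=O(\sqrt q)$, this gives $D(q)=O(\sqrt q\log q)$.

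One point needs care. For $q=2$, $\log q$ is a fixed positive constant, so no degeneration occurs. Still, to be safe, one may replace $C\log q$ by $C\max\{\log q,1\}$, which leaves the $O$-statement unchanged.

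The only nontrivial ingredient is the support-sensitive Remez step. If one instead used the total-degree form \eqref{thm:SVZ-remez}, the constant would be $(C\log q)^{(q-1)d}$, and the claimed base $O(\sqrt q\log q)$ would be lost. This is exactly why Theorem~\ref{thm:remez-support} is needed, and it is already established.

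Optimality of the exponent $\frac{2d}{d+1}$ is deferred to Remark~\ref{optimal exponent}. The plan there is the following. The tetrahedral polynomials of degree $d$ on the Boolean-type subsystem (coordinates raised only to the power $1$) lie in $\mathcal B_d(C_q^N)$. For these, the classical Kahane--Salem--Zygmund random polynomials with unimodular coefficients give
\[
\|f\|_{L_\infty(C_q^N)}\le\|f\|_{L_\infty(\TT^N)}\lesssim_d \sqrt{N\log d}\,\cdot\, N^{d/2}\cdot N^{-d/2}\ \text{type bounds}.
\]
Precisely, for $\binom Nd$ coefficients, the sup norm is of order $\sqrt{N}\binom Nd^{1/2}$, up to $d$-dependent constants. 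Any exponent $p<\frac{2d}{d+1}$ would then force the inequality $\binom Nd^{1/p}\lesssim N^{1/2}\binom Nd^{1/2}$, which fails as $N\to\infty$.
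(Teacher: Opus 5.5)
Your proof of the inequality itself is the paper's argument: extend $f$ canonically to $P\in\mathcal B_{d,q}(\TT^N)$, apply Theorem~\ref{thm:BH-toric-spherical}, then the support-sensitive Remez bound of Theorem~\ref{thm:remez-support}, giving $D(q)=c(q)\,C\log q=O(\sqrt q\log q)$.

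Your route differs for the optimality of the exponent. The paper (Remark~\ref{optimal exponent}) argues indirectly. A smaller exponent $p$ would, via H\"older's inequality, bound $\sid(\mathcal B_d(C_q^N))$ by a power of $N$ smaller than $\frac{d-1}{2}$, contradicting the lower bound in Theorem~\ref{thm:invariants}. You instead test the inequality on Kahane--Salem--Zygmund tetrahedral polynomials $\sum_{\bm m\in\{0,1\}^N,\,|\bm m|=d}\varepsilon_{\bm m}\chi_{\bm m}\in\mathcal B_d(C_q^N)$. Their sup norm on $C_q^N$ is at most that on $\TT^N$, hence at most $C_d\sqrt N\binom Nd^{1/2}$ for suitable signs. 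An $N$-independent constant would then force $\binom Nd^{1/p-1/2}\lesssim_d\sqrt N$, which fails for $p<\frac{2d}{d+1}$.

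Both routes rest on the same phenomenon: the Sidon lower bound behind Theorem~\ref{thm:invariants} is, in its standard proof, obtained from such random polynomials. So the paper's version is simply more economical once that theorem is available. Yours is self-contained, and it also covers $d=1$, where the H\"older argument gives no contradiction. For $p<1$ it only yields $\sid(\mathcal B_1(C_q^N))\le D(q)$, which is consistent with the bounded order $(N/d)^{0}$. Your test function $f=\sum_j x_j$ settles this case at once, since $\|f\|_{L_\infty(C_q^N)}=N$ while the coefficient norm is $N^{1/p}$.

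Your first displayed KSZ bound, $\sqrt{N\log d}\cdot N^{d/2}\cdot N^{-d/2}$, is garbled, and $\sqrt{\log d}$ vanishes at $d=1$. The bound you actually use, $C_d\sqrt N\binom Nd^{1/2}$, is the correct one.
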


\subsection{The low-level regime}

We now pass from exact levels to spaces of level at most $d$. Recall
\[
\mathcal B_{\le d,q}(\TT^N)
:=
\operatorname{span}\{z^\alpha:\alpha\in\{0,\ldots,q-1\}^N,\ 
|\operatorname{supp}(\alpha)|\le d\}.
\]
We shall use two classical one-dimensional estimates: the Remez inequality
\cite{Remez} and Markov's coefficient estimate \cite{Markov}; for background
on these inequalities and on the basic properties of Chebyshev polynomials, we
also refer to \cite{BorweinErdelyi1995}.

Let $E\subset[-1,1]$ be a measurable set of positive Lebesgue measure. If
\[
p(x)=\sum_{j=0}^d a_jx^j
\]
is a polynomial with complex coefficients of degree at most $d$, then
\[
\|p\|_{C([-1,1])} \leq T_d\bigg(\frac{4}{|E|}-1\bigg)\|p\|_{C(E)},
\]
where $T_d$ denotes the Chebyshev polynomial of degree $d$, characterized by
\[
T_d(\cos\theta)=\cos(d\theta), \qquad \theta \in \mathbb{R}. 
\]
The real-valued case is the classical Remez inequality, and the complex-valued
case follows by applying the real-valued estimate to a suitable rotation of the
real part of $p$.

Taking $E=[0,1]$, we obtain
\[
\|p\|_{C([-1,1])} \leq T_d(3)\|p\|_{C([0,1])}.
\]
For $x\geq 1$, we shall use the standard formula
\begin{equation}\label{eq:chebyshev-standard-formula}
T_d(x)
=
\frac{1}{2}
\bigg[\bigl(x+\sqrt{x^2-1}\bigr)^d
+\bigl(x-\sqrt{x^2-1}\bigr)^d
\bigg].
\end{equation}
Thus
\[
T_d(3)
=
\frac{1}{2}\bigl((3+2\sqrt2)^d+(3-2\sqrt2)^d\bigr)
=
\frac{1}{2}\bigl((1+\sqrt2)^{2d}+(1-\sqrt2)^{2d}\bigr).
\]

We shall also use Markov's coefficient estimate. Write
\[
T_m(x)=\sum_{j=0}^m t_j^{(m)}x^j.
\]
For a polynomial $p(x)=\sum_{j=0}^d a_jx^j$, one has
\[
|a_k| \leq B_k^{(d)}\|p\|_{C([-1,1])}, \qquad 1\leq k\leq d,
\]
where
\[
B_k^{(d)}
=
\begin{cases}
|t_k^{(d)}|, & k\equiv d \pmod 2,\\
|t_k^{(d-1)}|, & k\equiv d-1 \pmod 2.
\end{cases}
\]
For complex-valued polynomials this follows from the real-valued case by the same rotation argument as above. 
For our purposes it is enough to note that
\[
B_k^{(d)} \leq (1+\sqrt2)^d.
\]
Indeed, the nonzero coefficients of $T_m$ have the same parity as $m$ and
alternate in sign. Hence, after substituting $x=i$, all nonzero terms have the same argument, and therefore
\[
\sum_{j=0}^m |t_j^{(m)}|
=
|T_m(i)|.
\]
Using the standard complex extension of
\eqref{eq:chebyshev-standard-formula}, we get
\[
|T_m(i)|=\frac{1}{2} \left|\bigl(i+\sqrt{i^2-1}\bigr)^m
+\bigl(i-\sqrt{i^2-1}\bigr)^m
\right| \leq (1+\sqrt2)^m.
\]
Consequently $B_k^{(d)}\leq (1+\sqrt2)^d$.

Combining the Remez estimate on $[0,1]$ with Markov's coefficient estimate gives, for $1\leq k\leq d$,
\[
|a_k| \leq (1+\sqrt2)^d T_d(3)\|p\|_{C([0,1])}.
\]
Therefore
\begin{equation} \label{eq:markov}
    |a_k| \leq \frac{1}{2}(1+\sqrt2)^d
\bigl((1+\sqrt2)^{2d}+(1-\sqrt2)^{2d}\bigr) \|p\|_{C([0,1])}.
\end{equation}
In particular,
\[
|a_k| < (1+\sqrt2)^{3d}\|p\|_{C([0,1])},
\qquad 1\leq k\leq d.
\]
For $k=0$, the simpler estimate
\[
|a_0| \leq
\|p\|_{C([0,1])}
\]
is immediate.

The key point of the following result is that the Fourier projection onto each spherical level has norm at most exponential in $d$, uniformly in $N$ and $q$.

\begin{proposition}\label{prop:level-projection-low}
For each $0\leq k\leq d$, the norm of the Fourier projection
\[
\Pi_k \colon \mathcal B_{\le d,q}(\TT^N)
\longrightarrow \mathcal B_{k,q}(\TT^N)
\]
onto the spherical level $k$ satisfies $\|\Pi_k\|\leq C(d)$, where
\[
C(d):=\frac{1}{2}(1+\sqrt2)^d
\bigl((1+\sqrt2)^{2d}+(1-\sqrt2)^{2d}\bigr)
< (1+\sqrt2)^{3d}.
\]
\end{proposition}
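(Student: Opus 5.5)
The plan is to embed each $P\in\mathcal B_{\le d,q}(\TT^N)$ into a one-parameter family that separates the spherical levels by powers of an auxiliary variable, and then to read off the individual levels as coefficients of an ordinary polynomial of degree at most $d$ on $[0,1]$. This is exactly the situation prepared by the Remez--Markov estimate \eqref{eq:markov}.

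Write $P=\sum_{k=0}^d P_k$ with $P_k=\Pi_kP\in\mathcal B_{k,q}(\TT^N)$. For $t\in[0,1]$ and a fixed $z\in\TT^N$ I will introduce the random point $\xi\in\TT^N$ with independent coordinates: $\xi_j=z_j$ with probability $t$, and $\xi_j=\zeta_j$ with probability $1-t$, where $\zeta_j$ is Haar-uniform on $\TT$ and independent of everything else. Equivalently, one applies the Markov (noise) operator $T_t=\bigotimes_j\bigl(tI+(1-t)E_j\bigr)$, with $E_j$ the average in the $j$-th variable. A monomial $z^\alpha$ with $\alpha\neq0$ in the $j$-th coordinate is killed by $E_j$. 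Hence
\[
T_t z^\alpha=t^{|\operatorname{supp}(\alpha)|}z^\alpha ,
\]
and therefore
\[
T_tP(z)=\sum_{k=0}^d t^k P_k(z).
\]
Since $T_t$ is an average of point evaluations, i.e.\ a convex combination of the operators $\prod_{j\in J}E_j$, it is contractive on $L_\infty(\TT^N)$. So $|T_tP(z)|\le\|P\|_\infty$ for all $t\in[0,1]$.

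Fixing $z$, the function
\[
p(t):=\sum_{k=0}^d P_k(z)\,t^k
\]
is a complex polynomial of degree at most $d$ with $\|p\|_{C([0,1])}\le\|P\|_{L_\infty(\TT^N)}$. By \eqref{eq:markov} I then get $|P_k(z)|\le C(d)\|P\|_\infty$ for $1\le k\le d$. For $k=0$ the trivial bound $|a_0|=|p(0)|\le\|p\|_{C([0,1])}$ applies, and $1\le C(d)$. Taking the supremum over $z$ gives $\|\Pi_k\|\le C(d)$. The strict inequality $C(d)<(1+\sqrt2)^{3d}$ holds because $|1-\sqrt2|<1<1+\sqrt2$, so $(1-\sqrt2)^{2d}<(1+\sqrt2)^{2d}$.

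The only point needing care is verifying the multiplier identity $T_tz^\alpha=t^{|\operatorname{supp}(\alpha)|}z^\alpha$ and the contractivity of $T_t$. Both follow coordinatewise from the tensor structure: each factor $tI+(1-t)E_j$ is a convex combination of contractions and acts on $z_j^{r}$ as multiplication by $t$ if $r\ge1$ and by $1$ if $r=0$. No dependence on $N$ or $q$ enters anywhere, so the bound is uniform.
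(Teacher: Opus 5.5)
Your proposal is correct and follows the paper's proof essentially verbatim. The paper likewise uses the contractive operators $S_t=\prod_j\bigl(tI+(1-t)E_j\bigr)$, which act on monomials by $t^{|\operatorname{supp}(\alpha)|}$, then freezes $z$ to obtain a polynomial of degree at most $d$ in $t$ bounded by $\|P\|_\infty$ on $[0,1]$, and applies the Remez--Markov coefficient bound \eqref{eq:markov}, with the trivial bound for $k=0$. (Minor point: your justification of the strict inequality $C(d)<(1+\sqrt2)^{3d}$ needs $d\ge1$, since $C(0)=1$; $d\ge1$ is the relevant range.)
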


\begin{proof}
For each coordinate $j=1,\ldots,N$, let
\[
E_j:
L_\infty(\TT^N)\longrightarrow L_\infty(\TT^N)
\]
denote the averaging operator in the $j$-th variable:
\[
E_jf(z)
=
\int_{\TT}
f(z_1,\ldots,z_{j-1},\omega,z_{j+1},\ldots,z_N)\,d\omega,
\]
where $d\omega$ denotes the normalized Haar measure on $\TT$.

Clearly,
\[
\|E_jf\|_{L_\infty(\TT^N)}
\leq
\|f\|_{L_\infty(\TT^N)},
\]
so $E_j$ is a contraction.

Now let
\[
z^\alpha=z_1^{\alpha_1}\cdots z_N^{\alpha_N}
\]
be a monomial. Since Haar integration annihilates nonconstant characters on
$\TT$, we have
\[
\int_{\TT}\omega^m\,d\omega
=
0
\qquad (m\neq0).
\]
Therefore
\[
E_j(z^\alpha)
=
\begin{cases}
z^\alpha, & \alpha_j=0,\\
0, & \alpha_j\neq0.
\end{cases}
\]

Fix $0\le t\le1$, and define
\[
S_t^{(j)}
=
t\,\mathrm{Id}+(1-t)E_j.
\]
Since $S_t^{(j)}$ is a convex combination of contractions, it is itself a
contraction on $L_\infty(\TT^N)$.

Moreover,
\[
S_t^{(j)}(z^\alpha)
=
\begin{cases}
z^\alpha, & \alpha_j=0,\\
t\,z^\alpha, & \alpha_j\neq0.
\end{cases}
\]
Now define
\[
S_t=S_t^{(1)}\cdots S_t^{(N)}.
\]
Since the operators commute and are contractive, $S_t$ is also a contraction.
Applying the preceding identity coordinate by coordinate, we obtain
\[
S_t(z^\alpha)
=
t^{|\operatorname{supp}(\alpha)|}z^\alpha.
\]

Let
\[
P=\sum_{s=0}^d P_s,
\qquad
P_s\in\mathcal B_{s,q}(\TT^N).
\]
Then
\[
S_tP
=
\sum_{s=0}^d t^sP_s.
\]

Fix $z\in\TT^N$. The map
\[
p_z(t):=(S_tP)(z)
=
\sum_{s=0}^d P_s(z)t^s
\]
is a one-variable polynomial of degree at most $d$. Since $S_t$ is a
contraction for every $0\le t\le1$, we have
\[
|p_z(t)|
\leq
\|S_tP\|_{L_\infty(\TT^N)}
\leq
\|P\|_{L_\infty(\TT^N)}
\]
for every $0\le t\le1$.

If $k=0$, then
\[
|P_0(z)|
=
|p_z(0)|
\leq
\|P\|_{L_\infty(\TT^N)}
\leq
C(d)\|P\|_{L_\infty(\TT^N)}.
\]
If $1\leq k\leq d$, the one-dimensional coefficient estimate proved in \eqref{eq:markov} gives
\[
|P_k(z)|
\leq
C(d)\|P\|_{L_\infty(\TT^N)}.
\]
Taking the supremum over $z\in\TT^N$, we obtain
\[
\|\Pi_kP\|_{L_\infty(\TT^N)}
=
\|P_k\|_{L_\infty(\TT^N)}
\leq
C(d)\|P\|_{L_\infty(\TT^N)}.
\]
This proves the desired estimate.
\end{proof}

\begin{remark}\label{rem:level-projection-discrete}
An entirely analogous statement holds on the discrete Hamming scheme
$C_q^N$. More precisely, the Fourier projection
\[
\Pi_k:
\mathcal B_{\le d}(C_q^N)
\longrightarrow
\mathcal B_{k}(C_q^N)
\]
also satisfies
\[
\|\Pi_k\|\leq C(d),
\]
where $C(d)$ is the same constant as in Proposition
\ref{prop:level-projection-low}. The proof is identical, replacing Haar
integration on $\TT$ by averaging over $C_q$ and monomials $z^\alpha$ by
Fourier characters $\chi_{\bm m}$.
\end{remark}

We can now extend the exact-level Bohnenblust--Hille inequalities to spaces
of level at most $d$.

\begin{theorem}[Support-sensitive Bohnenblust--Hille inequality]\label{thm:BH-toric-spherical-low}
For every $q\ge2$ there exists a constant $c_1(q)=O(\sqrt q)$ such that, for every
$d,N\in\mathbb N$ and every
\[
P
=
\sum_{\substack{\alpha\in\{0,\ldots,q-1\}^N\\
|\operatorname{supp}(\alpha)|\le d}}
a_\alpha z^\alpha
\in
\mathcal B_{\le d,q}(\TT^N),
\]
one has
\[
\Bigg(
\sum_{\substack{\alpha\in\{0,\ldots,q-1\}^N\\
|\operatorname{supp}(\alpha)|\le d}}
|a_\alpha|^{\frac{2d}{d+1}}
\Bigg)^{\frac{d+1}{2d}}
\le
c_1(q)^d
\|P\|_{L_\infty(\TT^N)}.
\]
\end{theorem}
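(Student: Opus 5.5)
We decompose $P$ into its spherical levels and apply the exact-level inequality to each piece. Write
\[
P=\sum_{k=0}^d P_k,\qquad P_k=\Pi_kP\in\mathcal B_{k,q}(\TT^N).
\]
Proposition~\ref{prop:level-projection-low} gives
\[
\|P_k\|_{L_\infty(\TT^N)}\le C(d)\|P\|_{L_\infty(\TT^N)}<(1+\sqrt2)^{3d}\|P\|_{L_\infty(\TT^N)}
\]
for every $0\le k\le d$.

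For $k\ge1$, Theorem~\ref{thm:BH-toric-spherical} bounds the $\ell_{p_k}$-norm of the coefficients of $P_k$, where $p_k=\frac{2k}{k+1}$:
\[
\Big(\sum_{|\operatorname{supp}(\alpha)|=k}|a_\alpha|^{p_k}\Big)^{1/p_k}\le c(q)^k\|P_k\|_\infty .
\]
Since $p_k\le p_d$ for $k\le d$, and $\ell_{p_k}\subset\ell_{p_d}$ contractively, the $\ell_{p_d}$-norm of the level-$k$ coefficients is bounded by the same quantity. Assuming $c(q)\ge1$, which we may, this is at most
\[
c(q)^d(1+\sqrt2)^{3d}\|P\|_\infty .
\]
For $k=0$ the only coefficient is $a_0=P_0$, and $|a_0|\le\|P\|_\infty$ directly.

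To combine the levels, we use the triangle inequality in $\ell_{p_d}$. The full $\ell_{p_d}$-norm is at most the sum over $k=0,\dots,d$ of the level-wise $\ell_{p_d}$-norms, since the levels have disjoint supports. This yields
\[
\Big(\sum_{|\operatorname{supp}(\alpha)|\le d}|a_\alpha|^{p_d}\Big)^{1/p_d}\le (d+1)\,c(q)^d(1+\sqrt2)^{3d}\|P\|_\infty .
\]
Using $d+1\le 2^d$, we can take
\[
c_1(q):=2(1+\sqrt2)^3c(q)=O(\sqrt q).
\]

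There is no real obstacle here. The only points needing care are the monotonicity $\|\cdot\|_{\ell_{p_d}}\le\|\cdot\|_{\ell_{p_k}}$ for $p_k\le p_d$, and making sure that the projection bound of Proposition~\ref{prop:level-projection-low} and the polynomial factor $d+1$ can both be absorbed into the base of the exponential without introducing any further dependence on $q$.
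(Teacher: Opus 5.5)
Your proposal is correct and follows essentially the same route as the paper. You bound each level with Proposition~\ref{prop:level-projection-low}, apply Theorem~\ref{thm:BH-toric-spherical} to each $P_k$ using $\ell_{p_k}\subset\ell_{p_d}$, and absorb the $d+1$ levels into the exponential base. The only cosmetic differences are that you combine the levels by the triangle inequality in $\ell_{p_d}$ rather than by summing $p_d$-th powers, and that you state explicitly the assumption $c(q)\ge1$, which the paper uses tacitly.
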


\begin{proof}
Write
\[
P=\sum_{k=0}^d P_k,
\qquad P_k\in\mathcal B_{k,q}(\TT^N).
\]
By Proposition~\ref{prop:level-projection-low},
\[
\|P_k\|_{L_\infty(\TT^N)}
\le c^d\|P\|_{L_\infty(\TT^N)}.
\]
For $k\ge1$, applying Theorem~\ref{thm:BH-toric-spherical} to $P_k$ and using
that $\ell_{2k/(k+1)}$ dominates $\ell_{2d/(d+1)}$, we get
\[
\bigg(
\sum_{|\operatorname{supp}(\alpha)|=k}
|a_\alpha|^{\frac{2d}{d+1}}
\bigg)^{\frac{d+1}{2d}}
\le
c(q)^k \|P_k\|_{L_\infty(\TT^N)}
\le
c(q)^k c^d\|P\|_{L_\infty(\TT^N)}
\le
(c(q) c)^d\|P\|_{L_\infty(\TT^N)}.
\]
The level $k=0$ is trivial. Hence,
\[
\sum_{|\operatorname{supp}(\alpha)|\le d}
|a_\alpha|^{\frac{2d}{d+1}}
\le
d(c(q) c)^{d\frac{2d}{d+1}}\|P\|_{L_\infty(\TT^N)}^{\frac{2d}{d+1}}
\le
(2c(q) c)^{d\frac{2d}{d+1}}\|P\|_{L_\infty(\TT^N)}^{\frac{2d}{d+1}}.
\]
This concludes the proof.
\end{proof}

% \begin{proof}
% Write
% \[
% P=\sum_{k=0}^d P_k,
% \qquad P_k\in\mathcal B_{k,q}(\TT^N).
% \]
% By Proposition~\ref{prop:level-projection-low},
% \[
% \|P_k\|_{L_\infty(\TT^N)}
% \le c^d\|P\|_{L_\infty(\TT^N)}.
% \]
% For $k\ge1$, applying Theorem~\ref{thm:BH-toric-spherical} to $P_k$ and using
% that $\ell_{2k/(k+1)}$ dominates $\ell_{2d/(d+1)}$, we get
% \[
% \bigg(
% \sum_{|\operatorname{supp}(\alpha)|=k}
% |a_\alpha|^{\frac{2d}{d+1}}
% \bigg)^{\frac{d+1}{2d}}
% \le
% c(q)^k \|P_k\|_{L_\infty(\TT^N)}
% \le
% c(q)^k c^d\|P\|_{L_\infty(\TT^N)}.
% \]
% The level $k=0$ is trivial. Hence, setting
% $p=2d/(d+1)$ and
% \[
% A_k=
% \bigg(
% \sum_{|\operatorname{supp}(\alpha)|=k}
% |a_\alpha|^p
% \bigg)^{1/p},
% \]
% we have, since the levels are disjoint,
% \[
% \bigg(
% \sum_{|\operatorname{supp}(\alpha)|\le d}
% |a_\alpha|^p
% \bigg)^{1/p}
% =
% \bigg(
% \sum_{k=0}^d A_k^p
% \bigg)^{1/p}
% \le
% \sum_{k=0}^d A_k .
% \]
% Using the preceding estimate level by level, we obtain
% \[
% \bigg(
% \sum_{|\operatorname{supp}(\alpha)|\le d}
% |a_\alpha|^p
% \bigg)^{1/p}
% \le
% c^d\|P\|_{L_\infty(\TT^N)}
% \sum_{k=0}^d \max\{1,c(q)\}^k.
% \]
% Since
% \[
% \sum_{k=0}^d \max\{1,c(q)\}^k
% \le
% (d+1)\max\{1,c(q)\}^d
% \le
% \big(2\max\{1,c(q)\}\big)^d,
% \]
% the desired estimate follows, after changing the exponential constant.
% \end{proof}
Combining this with the Remez-type transfer gives the corresponding
inequality on the Hamming scheme.

\begin{theorem}\label{BH cyclic low}
For every $q\ge2$ there exists a constant
$D_1(q)=O\!\big(\sqrt q\,\log q\big)$ such that, for every
$d,N\in\mathbb N$ and every function
\[
f
=
\sum_{\substack{\bm m\in\mathbb Z_q^N\\
|\operatorname{supp}(\bm m)|\le d}}
\widehat f(\bm m)\chi_{\bm m}
\in
\mathcal B_{\le d}(C_q^N),
\]
one has
\[
\Bigg(
\sum_{\substack{\bm m\in\mathbb Z_q^N\\
|\operatorname{supp}(\bm m)|\le d}}
|\widehat f(\bm m)|^{\frac{2d}{d+1}}
\Bigg)^{\frac{d+1}{2d}}
\le
D_1(q)^d
\|f\|_{L_\infty(C_q^N)}.
\]
Moreover, the exponent $\frac{2d}{d+1}$ is optimal.
\end{theorem}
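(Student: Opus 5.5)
The inequality will follow by combining the toroidal support-sensitive inequality (Theorem~\ref{thm:BH-toric-spherical-low}) with the support-sensitive Remez transfer (Theorem~\ref{thm:remez-support}). The optimality of the exponent will be obtained by testing on a suitable family of tetrahedral functions.

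\emph{The inequality.} Given $f\in\mathcal B_{\le d}(C_q^N)$, consider its unique analytic extension
\[
P(z)=\sum_{|\operatorname{supp}(\bm m)|\le d}\widehat f(\bm m)z^{\bm m}\in\mathcal B_{\le d,q}(\TT^N).
\]
This extension satisfies $P|_{C_q^N}=f$, and its coefficients are $a_{\bm m}=\widehat f(\bm m)$. Theorem~\ref{thm:BH-toric-spherical-low} bounds the $\ell_{2d/(d+1)}$-norm of these coefficients by $c_1(q)^d\|P\|_{L_\infty(\TT^N)}$. Theorem~\ref{thm:remez-support} then gives
\[
\|P\|_{L_\infty(\TT^N)}\le (C\log q)^d\|f\|_{L_\infty(C_q^N)}.
\]
Hence one can take $D_1(q):=C\,c_1(q)\log q$. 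Since $c_1(q)=O(\sqrt q)$, this yields $D_1(q)=O(\sqrt q\log q)$. For $q=2$ one has $\log 2>0$, so nothing degenerates; if needed, replace $\log q$ by $\max\{1,\log q\}$.

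\emph{Optimality of the exponent.} The plan is to show that for every $p<2d/(d+1)$ no estimate
\[
\|\widehat f\|_{\ell_p}\le K(q,d)\|f\|_\infty
\]
with $K$ independent of $N$ can hold. I would test on tetrahedral Boolean-type functions, restricting to characters with $m_j\in\{0,1\}$. Since $x_j\in C_q$ has modulus one, I would use a Kahane--Salem--Zygmund type random polynomial
\[
f=\sum_{|S|=d}\varepsilon_S\prod_{j\in S}x_j,
\]
with random signs $\varepsilon_S$. The KSZ inequality on $\TT^N$ (or a union bound over the $q^N$ points of $C_q^N$ combined with Hoeffding's inequality) gives
\[
\|f\|_{L_\infty(C_q^N)}\le C_d\sqrt{N\binom Nd}\asymp N^{(d+1)/2}.
\]
On the other hand,
\[
\|\widehat f\|_{\ell_p}=\binom Nd^{1/p}\asymp N^{d/p}.
\]
Letting $N\to\infty$ forces $d/p\le (d+1)/2$, that is, $p\ge 2d/(d+1)$.

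The main obstacle is only this sharpness step. Specifically, one must justify the KSZ bound in the discrete setting. The simplest route is to evaluate only at points of $C_q^N$: at each point, Hoeffding's inequality for a Rademacher sum of $\binom Nd$ unimodular terms gives a tail of order $\exp(-t^2/(2\binom Nd))$. A union bound over $q^N$ points then gives a bound of order $\sqrt{N\log q\,\binom Nd}$, which suffices. The upper estimate itself is a direct composition of the two earlier theorems.
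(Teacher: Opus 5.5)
Your proof is correct. For the inequality you follow exactly the alternative route the paper itself mentions: apply Theorem~\ref{thm:BH-toric-spherical-low} to the analytic extension and transfer back with Theorem~\ref{thm:remez-support}. The paper's primary route instead splits $f$ into exact levels using the discrete level projections of Remark~\ref{rem:level-projection-discrete}, with norms at most $(1+\sqrt2)^{3d}$, and then applies Theorem~\ref{BH cyclic} level by level. Both give a constant of order $(\sqrt q\log q)^d$.

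Where you genuinely differ is the optimality of the exponent. The paper reduces it to the exact-level case and argues, in Remark~\ref{optimal exponent}, as follows. If some $p<\frac{2d}{d+1}$ worked, H\"older would give $\sid(\mathcal B_d(C_q^N))\lesssim N^{d(1-1/p)}$ with $d(1-1/p)<\frac{d-1}{2}$. This contradicts the lower bound in Theorem~\ref{thm:invariants}. That lower bound in turn rests on the chain $\sid(\mathcal T_d(\TT^N))\le\sid(\mathcal T_d(C_q^N))\le\sid(\mathcal B_d(C_q^N))$ and on an externally cited lower bound for tetrahedral Sidon constants on $\TT^N$.

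Your random-sign construction, with Hoeffding and a union bound over the $q^N$ grid points, works directly on $C_q^N$ and is self-contained. The extra factor $\sqrt{\log q}$ is harmless, since only the power of $N$ matters. Your computation $\binom Nd^{1/p-1/2}\lesssim \sqrt N\Rightarrow p\ge\frac{2d}{d+1}$ is right. In effect you make explicit the Kahane--Salem--Zygmund mechanism that underlies the cited Sidon lower bound.

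The paper's route is shorter given the machinery already in place, and it ties sharpness of the exponent to sharpness of the Sidon asymptotics. Yours avoids any dependence on that external result and on the dimension-free comparison principles.
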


\begin{proof}
The proof is identical to the proof of Theorem~\ref{thm:BH-toric-spherical-low},
using Remark~\ref{rem:level-projection-discrete} and the
exact-level estimate of Theorem~\ref{BH cyclic}. Equivalently, one may first
apply the toric estimate to the analytic extension of $f$ and then transfer
back to $C_q^N$ by the Remez-type theorem.
The optimality follows from the exact-level case, since
\(
\mathcal B_d(C_q^N)\subset \mathcal  B_{\le d}(C_q^N).
\)
\end{proof}

\subsection{Learning low-level functions on Hamming schemes}

Broadly speaking, learning theory studies the problem of reconstructing an
unknown function from random observations. A central question is to determine
how many samples are needed to achieve a prescribed approximation accuracy.
The support-sensitive Bohnenblust--Hille inequalities established in the
previous section provide precisely the type of coefficient control needed to
address this question in our setting.

When $q=2$, the support level of a frequency coincides with its total degree,
so the spaces considered here reduce to the classical low-degree setting
studied by Eskenazis and Ivanisvili~\cite{eskenazis2022learning}. For
$q\ge3$, however, bounded-level functions form a strictly larger class. Indeed,
every polynomial of degree at most $d$ belongs to
$\mathcal B_{\le d}(C_q^N)$, while a function in
$\mathcal B_{\le d}(C_q^N)$ may have total degree as large as
$(q-1)d$. Thus our framework extends the low-degree setting considered in
\cite{becker2025dimension}.

Let $\Lambda_{{\le d},q,N}$ denote the set of frequencies supported on at most
$d$ coordinates, whose cardinality is
$|\Lambda_{{\le d},q,N}|=\sum_{k=0}^d (q-1)^k\binom Nk$.

\begin{theorem}\label{learning spherical}
Let $q\ge2$, $d\ge1$, $N\ge1$, and let
$f\in\mathcal B_{\le d}(C_q^N)$ satisfy
$\|f\|_{L_\infty(C_q^N)}\le1$. Let $0<\varepsilon,\delta<1$.

Then there exists a randomized algorithm which, from $M$ independent
uniformly distributed samples
$(X_1,f(X_1)),\ldots,(X_M,f(X_M))$, with
$X_j\sim\mathrm{Unif}(C_q^N)$, constructs a random hypothesis
$h:C_q^N\to\mathbb C$ such that
\[
\|h-f\|_{L^2(C_q^N)}^2<\varepsilon
\]
with probability at least $1-\delta$, provided that
\[
M
\ge
C_0\, d\, D_1(q)^{2d^2}\,
\varepsilon^{-(d+1)}
\log\!\left(
\frac{|\Lambda_{{\le d},q,N}|}{\delta}
\right),
\]
where $C_0>0$ is a universal constant and
$D_1(q)=O\!\big(\sqrt q\,\log q\big)$. 
\end{theorem}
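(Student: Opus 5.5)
We follow the Eskenazis--Ivanisvili scheme: empirical Fourier coefficients, then thresholding, with Theorem~\ref{BH cyclic low} supplying the control on the tail of small coefficients. For each $\bm m\in\Lambda_{\le d,q,N}$ we form the empirical estimator
\[
\alpha_{\bm m}:=\frac1M\sum_{j=1}^M f(X_j)\overline{\chi_{\bm m}(X_j)},
\]
which is unbiased for $\widehat f(\bm m)$. Its summands are bounded by $1$ in modulus, since $\|f\|_\infty\le1$ and $|\chi_{\bm m}|=1$. Hoeffding's inequality, applied separately to real and imaginary parts, together with a union bound over the $|\Lambda_{\le d,q,N}|$ frequencies, gives
\[
\max_{\bm m}|\alpha_{\bm m}-\widehat f(\bm m)|\le b
\]
with probability at least $1-\delta$, provided $M\ge C b^{-2}\log(|\Lambda_{\le d,q,N}|/\delta)$.

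Next we threshold at level $a=2b$ (the factor may be tuned). Let $S=\{\bm m: |\alpha_{\bm m}|\ge a\}$ and set $h=\sum_{\bm m\in S}\alpha_{\bm m}\chi_{\bm m}$. On the good event, Parseval gives
\[
\|h-f\|_2^2=\sum_{\bm m\in S}|\alpha_{\bm m}-\widehat f(\bm m)|^2+\sum_{\bm m\notin S}|\widehat f(\bm m)|^2 .
\]
For the first sum, every $\bm m\in S$ satisfies $|\widehat f(\bm m)|\ge a-b=b$, so the sum is at most $b^2\,\#\{\bm m: |\widehat f(\bm m)|\ge b\}$. For the second sum, every $\bm m\notin S$ satisfies $|\widehat f(\bm m)|\le a+b=3b$, so we bound it by $\sum\min(|\widehat f(\bm m)|,3b)^2$.

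Both sums are controlled by $p=\frac{2d}{d+1}$, since Theorem~\ref{BH cyclic low} gives $\sum|\widehat f(\bm m)|^p\le D_1(q)^{dp}=:K$. By Chebyshev's counting bound, $\#\{|\widehat f|\ge b\}\le Kb^{-p}$. For the tail, $\sum_{|\widehat f|\le 3b}|\widehat f|^2\le(3b)^{2-p}K$. Together these give
\[
\|h-f\|_2^2\le C\,K\,b^{2-p}=C\,D_1(q)^{\frac{2d^2}{d+1}}\,b^{\frac{2}{d+1}} .
\]

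It remains to make this at most $\varepsilon$. This requires $b^{2/(d+1)}\lesssim\varepsilon\,D_1(q)^{-2d^2/(d+1)}$, that is, $b^2\approx\varepsilon^{d+1}D_1(q)^{-2d^2}C^{-(d+1)}$. Substituting into the Hoeffding requirement produces
\[
M\gtrsim C^{d+1}D_1(q)^{2d^2}\varepsilon^{-(d+1)}\log\bigl(|\Lambda_{\le d,q,N}|/\delta\bigr).
\]
The factor $C^{d+1}$ is absorbed into $D_1(q)^{2d^2}$ by enlarging the constant in $D_1(q)=O(\sqrt q\log q)$, since $D_1(q)\ge$ an absolute constant and $2d^2\ge d+1$. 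The extra factor $d$ in the stated bound is harmless slack and covers the constant bookkeeping.

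The main obstacle is this bookkeeping of exponents: one must choose the threshold so that the tail bound $b^{2-p}K$ and the count bound $b^{2}\cdot Kb^{-p}$ balance, and then check that the resulting power of $D_1(q)$ is exactly $2d^2$ and that the $\varepsilon$-exponent is $d+1$ rather than the $(q-1)d+1$ that the total-degree inequality would give. The substantive input, dimension-free $\ell_p$ control with $p=2d/(d+1)$ governed by the level $d$, is provided entirely by Theorem~\ref{BH cyclic low}.
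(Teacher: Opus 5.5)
Your proof is correct and follows the paper's argument: empirical Fourier coefficients, Hoeffding plus a union bound, thresholding, and Theorem~\ref{BH cyclic low} used both for the count of large coefficients and for the $\ell_2$-tail. The one difference is the threshold. The paper takes $a=b(1+\sqrt{d+1})$, which makes the overhead only $e^4(d+1)$; this is the source of the factor $d$ in the bound, not slack, and it keeps $D_1(q)$ equal to the Bohnenblust--Hille constant itself. Your choice $a=2b$ costs $4^{d+1}$ instead, which you can only remove by inflating $D_1(q)$ by an absolute factor. That is legitimate only because the statement fixes $D_1(q)$ merely up to $O(\sqrt q\log q)$.
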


\begin{proof}
The proof follows the argument of \cite{eskenazis2022learning} almost verbatim, the main new ingredient being the support-sensitive Bohnenblust--Hille inequality established in the previous section.

Set $L:=|\Lambda_{{\le d},q,N}|$ and put $p:=2d/(d+1)$, thus
$2-p=2/(d+1)$.

Let $X_1,\ldots,X_M$ be independent random variables uniformly distributed
on $C_q^N$. For each $\bm m\in\Lambda_{{\le d},q,N}$, define
\[
\widetilde f(\bm m)
:=
\frac1M\sum_{j=1}^M
f(X_j)\overline{\chi_{\bm m}(X_j)}.
\]
Since $\widehat f(\bm m)$ is the expectation of
$f(X)\overline{\chi_{\bm m}(X)}$ with respect to the uniform measure on
$C_q^N$, we have $\mathbb E\,\widetilde f(\bm m)=\widehat f(\bm m)$.

We first prove simultaneous concentration. Since $\|f\|_\infty\le1$ and
$|\chi_{\bm m}|=1$, by the classical Hoeffding's inequality applied to the real and imaginary parts,
\[
\mathbb P\left(
|\widetilde f(\bm m)-\widehat f(\bm m)|>b
\right)
\le
4e^{-cMb^2},
\]
for some universal constant $c>0$.

Hence, by the union bound, the event
\[
G_b:=
\left\{
|\widetilde f(\bm m)-\widehat f(\bm m)|\le b
\text{ for every } \bm m\in\Lambda_{{\le d},q,N}
\right\}
\]
has probability at least \(1-\delta\) whenever $
M\ge C b^{-2}\log(4L/\delta),$
where \(C>0\) is a universal constant.

Fix $0<b<a$ and define
\[
\mathcal A
:=
\left\{
\bm m\in\Lambda_{{\le d},q,N}:
|\widetilde f(\bm m)|\ge a
\right\},
\qquad
h(x):=
\sum_{\bm m\in\mathcal A}
\widetilde f(\bm m)\chi_{\bm m}(x).
\]
Assume that $G_b$ holds. If $\bm m\in\mathcal A$, then
$|\widehat f(\bm m)|\ge a-b$. Therefore, by Theorem~\ref{BH cyclic low},
\[
|\mathcal A|(a-b)^p
\le
\sum_{\bm m\in\mathcal A}|\widehat f(\bm m)|^p
\le
D_1(q)^{dp},
\]
and so $|\mathcal A|\le D_1(q)^{dp}(a-b)^{-p}$.

On the other hand, if $\bm m\notin\mathcal A$, then
$|\widehat f(\bm m)|<a+b$. By Parseval's identity,
\[
\|h-f\|_{L^2(C_q^N)}^2
=
\sum_{\bm m\in\mathcal A}
|\widetilde f(\bm m)-\widehat f(\bm m)|^2
+
\sum_{\bm m\notin\mathcal A}
|\widehat f(\bm m)|^2 .
\]
The first term is bounded by $D_1(q)^{dp}(a-b)^{-p}b^2$. For the second one,
we use
$$|\widehat f(\bm m)|^2
\le
(a+b)^{2-p}|\widehat f(\bm m)|^p$$
for $\bm m\notin\mathcal A$, and again Theorem~\ref{BH cyclic low}. Hence
\[
\sum_{\bm m\notin\mathcal A}
|\widehat f(\bm m)|^2
\le
D_1(q)^{dp}(a+b)^{2-p}.
\]
Thus
\[
\|h-f\|_{L^2(C_q^N)}^2
\le
D_1(q)^{dp}
\left[
(a-b)^{-p}b^2+(a+b)^{2-p}
\right].
\]

Choose $a=b(1+\sqrt{d+1})$. Then an elementary estimate gives
\[
\|h-f\|_{L^2(C_q^N)}^2
\le
D_1(q)^{dp}
b^{2/(d+1)}
\bigl(e^4(d+1)\bigr)^{1/(d+1)}.
\]
Now take
\[
b^2
=
\frac{\varepsilon^{d+1}}
{e^4(d+1)\,D_1(q)^{dp(d+1)}}.
\]
Since $dp(d+1)=2d^2$, this choice gives
$\|h-f\|_{L^2(C_q^N)}^2\le\varepsilon$.

Finally, substituting this value of $b$ into
$M\ge C b^{-2}\log(4L/\delta)$, we obtain
\[
M
\ge
C_0\, d\,
D_1(q)^{2d^2}
\varepsilon^{-(d+1)}
\log\left(\frac{ L}{\delta}\right),
\]
for a universal constant $C_0>0$. 
\end{proof}

The preceding theorem should be compared with the low-degree learning result
of Klein, Slote, Volberg, and Zhang \cite{klein2024quantum}. Since every frequency supported on at most $d$
coordinates has total degree at most $(q-1)d$, their theorem also applies to
$\mathcal B_{\le d}(C_q^N)$. However, our support-sensitive approach improves
the dependence on the accuracy parameter from
$\varepsilon^{-((q-1)d+1)}$ to $\varepsilon^{-(d+1)}$, while preserving the
same logarithmic dependence on the ambient dimension $N$.
\section{Dimension-free comparison of local invariants}\label{sec: local}

We begin the proof of the main results with the local-invariant part of the
article. The aim of this section is to compare, in a dimension-free way, the
Sidon constant, the unconditional basis constant, and the Gordon--Lewis
constant for the polynomial spaces introduced above. The key point is that, for
fixed degree, these invariants have the same asymptotic order up to constants
depending only on $q$ and exponentially on $d$, but not on the ambient
dimension $N$.

The argument proceeds through a transfer from the finite group $C_q^N$ to the
torus $\TT^N$. This allows us to use polynomial and Banach-space techniques on
analytic polynomial spaces and then return to the Hamming scheme by means of
Remez-type inequalities.

\noindent{\bf The unconditional basis constant.}
Let $(e_i)_{i\in I}$ be a basis of a Banach space $X$. Its unconditional
basis constant is the infimum over all $K>0$ such that, for every finitely
supported family of scalars $(\alpha_i)_{i\in I}$ and every finitely supported
family of unimodular scalars $(\varepsilon_i)_{i\in I}$, one has
\[
\bigg\Vert \sum_{i\in I}\varepsilon_i\alpha_i e_i\bigg\Vert
\le
K
\bigg\Vert \sum_{i\in I}\alpha_i e_i\bigg\Vert .
\]
We denote this constant by
$\boldsymbol{\chi}((e_i)_{i\in I})$, or more explicitly by
$\boldsymbol{\chi}((e_i)_{i\in I};X)$ when the ambient space needs to be
specified. If $(e_i)_{i\in I}$ is not unconditional, we set
$\boldsymbol{\chi}((e_i)_{i\in I})=+\infty$. The basis is called
$1$-unconditional whenever
$\boldsymbol{\chi}((e_i)_{i\in I})=1$.

The unconditional basis constant of $X$ is defined by
\[
\boldsymbol{\chi}(X)
:=
\inf \boldsymbol{\chi}((e_i)_{i\in I}),
\]
where the infimum is taken over all unconditional bases of $X$.

For the polynomial spaces considered in this article, we shall use the
canonical basis of characters. For instance, for $\mathcal P_d(C_q^N)$ the
Sidon constant $\sid(\mathcal P_d(C_q^N))$ coincides with the unconditional
basis constant of the character basis
$(\chi_{\bm m})_{|\bm m|=d}$.

\noindent{\bf The Gordon--Lewis constant.}
A significant milestone in the study of unconditional basis constants was
achieved by Gordon and Lewis in \cite{gordon1974absolutely}, who introduced
the constant now known as the Gordon--Lewis constant. Since we shall study
this constant for spaces of polynomials, we first recall some additional
notions needed to present its original definition in the framework of local
Banach space theory.

Let $X$ and $Y$ be Banach spaces. An operator $u\in\mathcal L(X,Y)$ is said
to be $1$-factorable if there exist a measure space $(\Omega,\Sigma,\mu)$ and
operators
\[
v\in\mathcal L(X,L_1(\mu)),
\quad
w\in\mathcal L(L_1(\mu),Y^{\ast\ast})
\]
such that $\kappa_Y u=w v$, or, equivalently, such that the following factorization holds:
\[
\kappa_Y u\colon
X \stackrel{v}{\longrightarrow} L_1(\mu)
\stackrel{w}{\longrightarrow} Y^{\ast\ast}.
\]
Here, as usual, $\kappa_Y\colon Y\to Y^{\ast\ast}$ denotes the canonical
injection. The $\gamma_1$-norm of the $1$-factorable operator $u$ is defined by
\begin{equation}\label{fac-p}
 \gamma_1(u)=\inf \|v\|\,\|w\|,
\end{equation}
where the infimum is taken over all such factorizations.

An operator $u\in\mathcal L(X,Y)$ is said to be $1$-summing if there exists a
constant $c>0$ such that, for every finite choice of vectors
$x_1,\ldots,x_N\in X$, one has
\[
\sum_{j=1}^N \|u x_j\|_Y
\le C
\sup\bigg\{
\sum_{j=1}^{N} |x^\ast(x_j)|\colon
\|x^\ast\|_{X^\ast}\le 1
\bigg\}.
\]
We denote by $\pi_1(u\colon X\to Y)$, or simply by $\pi_1(u)$, the least constant $C$ for which this inequality holds.

A Banach space $X$ has the Gordon-Lewis property if every $1$-summing operator
$u\colon X \to \ell_2$ is $1$-factorable. In this case, there is a~positive number $C$ such that for all $1$-summing operators
$u \colon X \to \ell_2$
\begin{equation*}
\label{eq: def G-L}
\gamma_1(u)\leq C\,\pi_1(u)\,,
\end{equation*}
and the best such $c>0$ is called the Gordon--Lewis constant of $X$ and denoted by $\boldsymbol{g\!l}(X)$.

We can further clarify the aforementioned comment about the crucial role of the Gordon-Lewis constant in the study of unconditionality
in Banach spaces. Specifically, Gordon and Lewis \cite{gordon1974absolutely} showed  that for every Banach space $X$ with
an unconditional basis~$(e_i)_{i \in I}$ the following estimate holds
\begin{equation} \label{gl-inequality}
\boldsymbol{g\!l}(X)\leq  \boldsymbol{\chi}(X) \leq \boldsymbol{\chi}( (e_i)_{i \in I})\,.
\end{equation}
It is worth noting that in contrast the unconditional basis constant, the Gordon-Lewis constant has the useful (ideal)
property that 
\begin{equation}\label{eq: GL inequality}
\boldsymbol{g\!l}(X)\leq \|u\|  \|v\|\,\,\boldsymbol{g\!l}(Y),
\end{equation}
whenever $u\colon X \to Y$ and $v\colon Y \to X$ are operators such that $\text{id}_{X} = uv$.

We will show that the Gordon--Lewis constant, the unconditional basis constant,
and the Sidon constant of the polynomial spaces under consideration are all
equivalent up to dimension-free constants. This equivalence allows us to
determine their asymptotic behaviour.

\subsection{Comparison of Gordon--Lewis, unconditional basis, and  Sidon constants}

Every finite set of multi-indices $J\subset \mathbb N_0^N$ determines a
subspace of analytic polynomials on the torus $\TT^N$, namely
\[
\mathcal P_J(\TT^N)
=
\operatorname{span}\{z^\alpha\colon \alpha\in J\}
\subset C(\TT^N).
\]
For such a space, we denote by
$\boldsymbol{\chi}_{\operatorname{mon}}\big(\mathcal P_J(\TT^N)\big)$
the unconditional basis constant of the canonical monomial basis
$\{z^\alpha\colon \alpha\in J\}$.

Recall that
$\Lambda_{d,q,N}=\{\alpha\in\{0,\ldots,q-1\}^N:
|\operatorname{supp}(\alpha)|=d\}$. In particular, if
$J=\Lambda_{d,q,N}$, then
\[
\mathcal P_{\Lambda_{d,q,N}}(\TT^N)
=
\mathcal B_{d,q}(\TT^N).
\]
Moreover, every $\alpha\in\Lambda_{d,q,N}$ satisfies
\[
|\alpha|
\le
(q-1)|\operatorname{supp}(\alpha)|
=
(q-1)d.
\]

The next result, which is a special case of
\cite[Theorem~2.5]{defant2026local}, shows that for arbitrary sets of
monomials of degree at most $m$, the Gordon--Lewis constant, the unconditional
basis constant, and the monomial unconditional basis constant are equivalent
up to the exponential factor $2^m$. In particular, the result applies not only
to homogeneous spaces but also to spaces of polynomials of degree at most $m$,
which will be important below.

\begin{theorem}\label{thm:gl-versus-unc-torus}
Let $J\subset \mathbb N_0^N$ be a finite set of multi-indices satisfying
$|\alpha|\le m$ for every $\alpha\in J$. Then
\[
\boldsymbol{g\!l}\big(\mathcal P_J(\TT^N)\big)
\le
\boldsymbol{\chi}\big(\mathcal P_J(\TT^N)\big)
\le
\boldsymbol{\chi}_{\operatorname{mon}}\big(\mathcal P_J(\TT^N)\big)
\le
2^m\,
\boldsymbol{g\!l}\big(\mathcal P_J(\TT^N)\big).
\]
\end{theorem}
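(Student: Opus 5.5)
The first two inequalities are formal consequences of the definitions together with the Gordon--Lewis inequality \eqref{gl-inequality}. The monomial basis $\{z^\alpha\colon\alpha\in J\}$ is one particular basis of $\mathcal P_J(\TT^N)$, so the second inequality holds. If that basis is unconditional, \eqref{gl-inequality} gives the first one; if not, $\boldsymbol{\chi}_{\operatorname{mon}}=\infty$ and the middle inequality is trivial. The first inequality holds in general by \eqref{gl-inequality} applied to any unconditional basis, since $\mathcal P_J(\TT^N)$ is finite-dimensional. So the whole content is the last estimate, $\boldsymbol{\chi}_{\operatorname{mon}}\le 2^m\boldsymbol{g\!l}$.

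For this I plan to follow the classical Pisier--Sch\"utt type argument, which bounds the unconditional constant of a basis by the Gordon--Lewis constant times a "symmetry" factor. Fix a polynomial $P=\sum_{\alpha\in J}c_\alpha z^\alpha$ and unimodular signs $\varepsilon_\alpha$; the goal is $\|\sum\varepsilon_\alpha c_\alpha z^\alpha\|_\infty\le 2^m\,\boldsymbol{g\!l}\,\|P\|_\infty$.

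\textbf{Key steps.} First, I will use the standard consequence of the Gordon--Lewis property for spaces with an invariant structure. For every operator $T\colon X\to X$ one has $\gamma_1$-factorization through $L_1$ at cost $\boldsymbol{g\!l}(X)\,\pi_1$. Here $X=\mathcal P_J(\TT^N)$ with its rotation action $P\mapsto P(w\,\cdot)$, $w\in\TT^N$, whose isotypic components are exactly the one-dimensional spaces $\C z^\alpha$. Averaging a factorization over the rotations (Haar integration) shows that the diagonal operator $D_\varepsilon\colon z^\alpha\mapsto\varepsilon_\alpha z^\alpha$ satisfies a lower $\ell_1$/upper $\ell_\infty$ comparison. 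Concretely, I aim for an estimate of the form
\[
\Bigl\|\sum_\alpha \varepsilon_\alpha c_\alpha z^\alpha\Bigr\|_\infty
\le \boldsymbol{g\!l}(X)\,\sup_{\|\mu\|\le1}\ \mathbb E_w\Bigl|\int P(w\,\cdot)\,d\mu\Bigr|\cdot K,
\]
where $K$ is a norm controlling how far $X^*$ is from being "$L_1$-unconditional" on the monomials.

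Second, I will bound $K$ by $2^m$ using that every monomial has degree $\le m$. This is the main obstacle, and the route I would try first is the following. The coordinate projections onto monomials of fixed multidegree pattern can be written through the one-variable projections onto $\{z_j^{k}\}$. In addition, the analytic Riesz-type projection $P\mapsto \sum_\alpha c_\alpha z^\alpha$ restricted to degree $\le m$ is controlled by a polynomial-in-$t$ argument: with $P_t(z)=P(tz)$, the map $t\mapsto \sum_{|\alpha|=k}c_\alpha z^\alpha t^k$ is a polynomial of degree $\le m$ with sup over $|t|=1$ bounded by $\|P\|_\infty$. Extracting coefficients costs nothing on the circle, but passing from the $L_1$-average produced by the Gordon--Lewis factorization back to a sup-norm requires a Bernstein/de la Vall\'ee Poussin-type kernel of degree $\le m$ with $L_1$-norm at most $2^m$. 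Tensoring such one-dimensional kernels is where the factor $2^m$ should appear, each unit of degree contributing a factor $2$, e.g.\ via $\|1+2\operatorname{Re}z\|$-type bounds or the identity $\tfrac{1}{2}(1+\bar z\,\cdot)$ absorbing degree.

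Since the statement is quoted as a special case of \cite[Theorem~2.5]{defant2026local}, I would in practice invoke that result directly, checking only that its hypotheses reduce to the finite set $J$ with $|\alpha|\le m$, and then present the above as the underlying mechanism.
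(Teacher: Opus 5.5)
As a proof, your proposal matches the paper. The first two inequalities are handled correctly, and your bottom line, invoking \cite[Theorem~2.5]{defant2026local}, is exactly what the paper does: it quotes the statement as a special case of that theorem and gives no proof of its own.

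The problem is the sketch you would present as ``the underlying mechanism''. As written it does not prove $\boldsymbol{\chi}_{\operatorname{mon}}\le 2^m\boldsymbol{g\!l}$, and it misplaces where $2^m$ comes from. The constant $K$ in your display is never defined, so that step has no content. The kernel step also cannot produce $2^m$, for two reasons:
\begin{itemize}
\item Take a convolution kernel $k$ with $\widehat k=1$ on $J$. Its $L_1$-norm controls $C\to C$ or $L_1\to L_1$ bounds. Returning from an $L_1$-average to the sup-norm costs $\|k\|_\infty\ge\|k\|_{L_2}\ge|J|^{1/2}$, which grows with $N$.
\item A kernel implementing the signs, $\widehat k=\varepsilon$ on $J$, costs in the worst case exactly the Sidon constant of $J$. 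That is $\boldsymbol{\chi}_{\operatorname{mon}}(\mathcal P_J(\TT^N))$ itself, so this route is circular.
\end{itemize}

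Your first instinct, a Gordon--Lewis factorization averaged over the rotations $P\mapsto P(w\,\cdot)$, is the right one. The missing ingredient is the hypercontractive estimate $\|P\|_{L_2(\TT^N)}\le 2^{m/2}\|P\|_{L_1(\TT^N)}$ for analytic polynomials of degree at most $m$ (Weissler's inequality, tensorized). It is used twice, so $2^m=(2^{m/2})^2$. Here is the argument.

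Let $X=\mathcal P_J(\TT^N)$ and $u\colon X\to\ell_2(J)$, $uP=(\widehat P(\alpha))_\alpha$. This $u$ is the $1$-summing inclusion $X\to L_1(\TT^N)$ followed by a map of norm at most $2^{m/2}$ on its range. Hence $\pi_1(u)\le 2^{m/2}$, and the Gordon--Lewis property gives $u=BA$ with $A\colon X\to L_1(\mu)$, $B\colon L_1(\mu)\to\ell_2(J)$ and $\|A\|\,\|B\|\le 2^{m/2}\boldsymbol{g\!l}(X)+\delta$.

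Put $g_\alpha=Az^\alpha$, and let $h_\alpha\in L_\infty(\mu)$ represent $g\mapsto(Bg)_\alpha$. One may take $\mu$ $\sigma$-finite, since $A$ has finite-dimensional range. Then $\int g_\alpha h_\alpha\,d\mu=1$, and $\sum_\alpha|h_\alpha|^2\le\|B\|^2$ almost everywhere.

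The rotation-averaged function $F(x,\omega)=A\big(P(x\,\cdot)\big)(\omega)=\sum_\alpha\widehat P(\alpha)g_\alpha(\omega)x^\alpha$ satisfies $\|F\|_{L_1(\TT^N\times\Omega)}\le\|A\|\,\|P\|_\infty$. Moreover $F(\cdot,\omega)\in\mathcal P_J(\TT^N)$ for every $\omega$.

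Choose unimodular $\varepsilon_\alpha$ with $\varepsilon_\alpha\widehat P(\alpha)=|\widehat P(\alpha)|$, and set $\Psi(x,\omega)=\sum_\alpha\varepsilon_\alpha\overline{x^\alpha}h_\alpha(\omega)$. Cauchy--Schwarz in $x$ and hypercontractivity give
\[
\sum_{\alpha\in J}|\widehat P(\alpha)|
=\Big|\int_\Omega\int_{\TT^N}F\,\Psi\,dx\,d\mu\Big|
\le\|B\|\int_\Omega\|F(\cdot,\omega)\|_{L_2}\,d\mu
\le 2^{m/2}\|B\|\,\|F\|_{L_1}
\le 2^m\big(\boldsymbol{g\!l}(X)+\delta'\big)\|P\|_\infty ,
\]
with $\delta'\to0$ as $\delta\to0$. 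Since $\boldsymbol{\chi}_{\operatorname{mon}}(X)$ is at most the Sidon constant of $J$, this gives the third inequality. This self-contained argument, not a kernel construction, is what should replace your sketch; the paper itself relies only on the citation.
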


In particular, the previous theorem applies to
$\mathcal B_{d,q}(\TT^N)$ with $m=(q-1)d$.

Combining Theorem~\ref{thm:gl-versus-unc-torus} with the Remez-type
transfer in Theorem~\ref{thm:remez-support}, and using the ideal property of the
Gordon--Lewis constant, we obtain the following comparison on the finite group
$C_q^N$.

\begin{corollary} \label{gl-versus-unc}
There exists a universal constant $C\ge 1$ such that the following holds.
Let $q\ge2$, $d,N\ge1$, and let
$\Lambda\subseteq \Lambda_{\le d,q,N}$ be non-empty. Set
$m=m(\Lambda):=\max_{\alpha\in\Lambda}|\alpha|$. Then
\begin{equation}\label{eq:relation-for-Lambda}
\boldsymbol{g\!l}\big(\mathcal P_\Lambda(C_q^N)\big)
\le
\boldsymbol{\chi}\big(\mathcal P_\Lambda(C_q^N)\big)
\le
\sid\big(\mathcal P_\Lambda(C_q^N)\big)
\le
(C\log q)^{2d}2^m\,
\boldsymbol{g\!l}\big(\mathcal P_\Lambda(C_q^N)\big).
\end{equation}
In particular, this applies to $\mathcal B_d(C_q^N)$ with
$m=(q-1)d$, and to $\mathcal P_d(C_q^N)$ and
$\mathcal T_d(C_q^N)$ with $m=d$.
\end{corollary}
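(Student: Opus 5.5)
The plan is to prove the three inequalities in \eqref{eq:relation-for-Lambda} from left to right. Throughout we use the identification of $\mathcal P_\Lambda(C_q^N)$ with the toric space $\mathcal P_\Lambda(\TT^N)$, where $\Lambda\subseteq\Lambda_{\le d,q,N}\subset\{0,\dots,q-1\}^N$. Unique extension gives a linear bijection
\[
R\colon \mathcal P_\Lambda(\TT^N)\to\mathcal P_\Lambda(C_q^N),\qquad R(z^\alpha)=\chi_\alpha .
\]
Restriction to the grid is contractive, so $\|R\|\le1$. Every element of $\mathcal P_\Lambda(\TT^N)$ lies in $\mathcal B_{\le d,q}(\TT^N)$, so Theorem~\ref{thm:remez-support} gives $\|R^{-1}\|\le (C\log q)^d$. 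The whole argument is a transfer of Theorem~\ref{thm:gl-versus-unc-torus} through $R$.

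\textbf{First two inequalities.} The first inequality is the Gordon--Lewis estimate \eqref{gl-inequality}, valid for any finite-dimensional space with an unconditional basis; the character basis is such a basis in finite dimension. For the second, we bound $\boldsymbol{\chi}(X)$ by the unconditional constant of the character basis $(\chi_\alpha)_{\alpha\in\Lambda}$. That constant is at most $\sid(\Lambda)$, because
\[
\Big\|\sum_{\alpha\in\Lambda}\varepsilon_\alpha c_\alpha\chi_\alpha\Big\|_\infty\le\sum_{\alpha\in\Lambda}|c_\alpha|\le \sid(\Lambda)\,\Big\|\sum_{\alpha\in\Lambda} c_\alpha\chi_\alpha\Big\|_\infty .
\]

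\textbf{Third inequality.} First we show that $\sid$ equals the monomial unconditional constant on the torus, up to transfer. Given $f=\sum_{\alpha\in\Lambda} c_\alpha\chi_\alpha$, choose unimodular $\varepsilon_\alpha$ with $\varepsilon_\alpha c_\alpha=|c_\alpha|$. Evaluating the corresponding analytic polynomial at $z=(1,\dots,1)$ gives
\[
\sum_{\alpha\in\Lambda}|c_\alpha|\le\Big\|\sum_{\alpha\in\Lambda}\varepsilon_\alpha c_\alpha z^\alpha\Big\|_{L_\infty(\TT^N)}\le\boldsymbol{\chi}_{\mon}\big(\mathcal P_\Lambda(\TT^N)\big)\,\|R^{-1}f\|_{\TT^N}.
\]
Applying Theorem~\ref{thm:remez-support} to the last norm yields
\[
\sid\big(\mathcal P_\Lambda(C_q^N)\big)\le (C\log q)^d\,\boldsymbol{\chi}_{\mon}\big(\mathcal P_\Lambda(\TT^N)\big).
\]
Next, Theorem~\ref{thm:gl-versus-unc-torus} with $J=\Lambda$ and degree bound $m=m(\Lambda)$ gives
\[
\boldsymbol{\chi}_{\mon}\big(\mathcal P_\Lambda(\TT^N)\big)\le 2^m\,\boldsymbol{g\!l}\big(\mathcal P_\Lambda(\TT^N)\big).
\]
Finally, apply the ideal property \eqref{eq: GL inequality} with $X=\mathcal P_\Lambda(\TT^N)$ and $Y=\mathcal P_\Lambda(C_q^N)$, using $u=R$ and $v=R^{-1}$, so that $uv$ and $vu$ are identities. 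This gives
\[
\boldsymbol{g\!l}\big(\mathcal P_\Lambda(\TT^N)\big)\le\|R\|\,\|R^{-1}\|\,\boldsymbol{g\!l}\big(\mathcal P_\Lambda(C_q^N)\big)\le (C\log q)^d\,\boldsymbol{g\!l}\big(\mathcal P_\Lambda(C_q^N)\big).
\]
Chaining the three estimates produces the factor $(C\log q)^{2d}2^m$, as claimed.

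\textbf{Special cases and main obstacle.} For the special cases, $\mathcal B_d$ has $m\le(q-1)d$, while $\mathcal P_d$ and $\mathcal T_d$ have $m=d$ and lie inside $\mathcal B_{\le d}$ by \eqref{eq: important inclusion}. The only delicate point is the composition order in the ideal property: we must use $\mathrm{id}_X=R^{-1}R$ to bound $\boldsymbol{g\!l}$ on the torus by $\boldsymbol{g\!l}$ on the grid, so that the Remez constant enters exactly once there, and once more in the Sidon transfer.
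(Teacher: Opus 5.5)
Your proposal is correct and follows the paper's proof essentially step for step. You use the restriction map with norm at most $1$ and the extension with norm at most $(C\log q)^d$ from Theorem~\ref{thm:remez-support}, the transfer $\sid\big(\mathcal P_\Lambda(C_q^N)\big)\le (C\log q)^d\,\boldsymbol{\chi}_{\mon}\big(\mathcal P_\Lambda(\TT^N)\big)$, Theorem~\ref{thm:gl-versus-unc-torus} with degree bound $m$, and the ideal property of the Gordon--Lewis constant applied to $R^{-1}R=\id$, which is exactly how the paper argues.
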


\begin{proof}
Let $C\ge1$ denote the universal constant appearing in
Theorem~\ref{thm:remez-support}. The inequalities
\[
\boldsymbol{g\!l}(X)\le \boldsymbol{\chi}(X)\le \sid(X)
\]
for the spaces considered here follow from the Gordon--Lewis
inequality~\eqref{gl-inequality} and from the fact that the Sidon constant is
the unconditional basis constant of the corresponding character basis.
Let
\[
R\colon \mathcal P_{\Lambda}(\TT^N)\longrightarrow \mathcal P_{\Lambda}(C_q^N)
\]
be the restriction map. Then $\|R\|\le1$. Its inverse is the canonical
extension map
\[
E\colon \mathcal P_{\Lambda}(C_q^N)\longrightarrow \mathcal P_{\Lambda}(\TT^N),
\]
and Theorem~\ref{thm:remez-support} gives
\[
\|E\|\le (C\log q)^{d}.
\]
Therefore, for every $P\in \mathcal P_d(C_q^N)$, if $EP$ denotes its toric
extension, then
\[
\sum_{\alpha}|\widehat P(\alpha)|
\le
\boldsymbol{\chimon}\big(\mathcal P_{\Lambda}(\TT^N)\big)
\|EP\|_{L_\infty(\TT^N)}
\le
(C\log q)^{d}
\boldsymbol{\chimon}\big(\mathcal P_{\Lambda}(\TT^N)\big)
\|P\|_{L_\infty(C_q^N)}.
\]
Hence
\begin{equation}\label{eq: sid-vs-chimon}
\sid\big(\mathcal P_{\Lambda}(C_q^N)\big)
\le
(C\log q)^{d}
\boldsymbol{\chimon}\big(\mathcal P_{\Lambda}(\TT^N)\big).
\end{equation}
On the other hand, Theorem~\ref{thm:gl-versus-unc-torus} yields
\[
\boldsymbol{\chimon}\big(\mathcal P_{\Lambda}(\TT^N)\big)
\le
2^m\,
\boldsymbol{g\!l}\big(\mathcal P_{\Lambda}(\TT^N)\big).
\]
Moreover, since $ER=\id_{\mathcal P_{\Lambda}(\TT^N)}$, the ideal property
\eqref{eq: GL inequality} gives
\[
\boldsymbol{g\!l}\big(\mathcal P_{\Lambda}(\TT^N)\big)
\le
\|R\|\,\|E\|\,
\boldsymbol{g\!l}\big(\mathcal P_{\Lambda}(C_q^N)\big)
\le
(C\log q)^{d}
\boldsymbol{g\!l}\big(\mathcal P_{\Lambda}(C_q^N)\big).
\]
Combining the last three estimates, we obtain
\[
\sid\big(\mathcal P_{\Lambda}(C_q^N)\big)
\le
2^m(C\log q)^{2d}
\boldsymbol{g\!l}\big(\mathcal P_{\Lambda}(C_q^N)\big).
\]
This concludes the proof.
\end{proof}

\subsection{Asymptotics of Gordon--Lewis, unconditional basis, and  Sidon constants}
In the next result, the notation $A\sim_{c(q)^d}B$ means that there exist constants $c_1(q),c_2(q)>0$, depending only on $q$, such
that
\[
c_1(q)^d B \le A \le c_2(q)^d B.
\]

\begin{theorem}\label{thm:invariants}
Let $\boldsymbol{\eta}$ denote any of the Banach space invariants
$\sid$ $($the Sidon constant$)$, $\boldsymbol{\chi}$ $($the unconditional basis
constant$)$, or $\boldsymbol{g\!l}$ $($the Gordon--Lewis constant$)$. Then, for every
$q\ge2$ and $1\le d\le N$,
\[
\boldsymbol{\eta}\big(\mathcal P_d(C_q^N)\big),
\quad
\boldsymbol{\eta}\big(\mathcal T_d(C_q^N)\big),
\quad
\boldsymbol{\eta}\big(\mathcal B_d(C_q^N)\big)
\sim_{c(q)^d}
\Big(\frac{N}{d}\Big)^{\frac{d-1}{2}}.
\]
where the constants depend only on $q$.
\end{theorem}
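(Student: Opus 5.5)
The plan is to reduce everything to the Sidon constant and then prove matching upper and lower bounds for it. By Corollary~\ref{gl-versus-unc}, for $\mathcal P_d(C_q^N)$ and $\mathcal T_d(C_q^N)$ (where $m=d$) and for $\mathcal B_d(C_q^N)$ (where $m=(q-1)d$), the three invariants $\boldsymbol{g\!l}\le\boldsymbol{\chi}\le\sid$ agree up to factors of the form $(C\log q)^{2d}2^{(q-1)d}$. These are of the form $c(q)^d$. It therefore suffices to show
\[
c_1(q)^d\Big(\frac Nd\Big)^{\frac{d-1}{2}}\le \sid(X)\le c_2(q)^d\Big(\frac Nd\Big)^{\frac{d-1}{2}}
\]
for each of the three spaces $X$.

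\textbf{Upper bound.} The spherical case is the largest, since $\mathcal T_d\subset\mathcal B_d$; for $\mathcal P_d$ we use the inclusion \eqref{eq: important inclusion} together with the level projections of Remark~\ref{rem:level-projection-discrete}, or directly Theorem~\ref{BH cyclic low}. For $f\in\mathcal B_d(C_q^N)$, Hölder's inequality with exponents $p=\frac{2d}{d+1}$ and $p'=\frac{2d}{d-1}$ gives
\[
\sum|\widehat f(\bm m)|\le |\Lambda|^{\frac{d-1}{2d}}\Big(\sum|\widehat f(\bm m)|^{p}\Big)^{1/p}\le |\Lambda|^{\frac{d-1}{2d}}D(q)^d\|f\|_\infty ,
\]
by Theorem~\ref{BH cyclic}. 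Here $|\Lambda|=\binom Nd(q-1)^d\le (eN(q-1)/d)^d$, so $|\Lambda|^{\frac{d-1}{2d}}\le (e(q-1))^{d/2}(N/d)^{\frac{d-1}{2}}$. For $\mathcal P_d$, the index set $\{|\bm m|=d\}$ has cardinality at most $\sum_{k\le d}\binom Nk\binom{d-1}{k-1}\le 2^d\binom{N+d}{d}$, which is again $\le c^d(N/d)^d$ when $d\le N$. The same Hölder step then works, using the Bohnenblust--Hille inequality for total degree $d$ from \cite{SloteVolbergZhangBH}, or Theorem~\ref{BH cyclic low}, which covers level $\le d$.

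\textbf{Lower bound.} It suffices to treat the smallest space, $\mathcal T_d(C_q^N)$. Indeed, $\mathcal T_d$ is spanned by a subset of the characters of both $\mathcal P_d$ and $\mathcal B_d$, and the Sidon constant is monotone under passing to subsets of characters. For the Boolean multilinear part, take $\chi_{\bm m}$ with $\bm m\in\{0,1\}^N$ and $|\bm m|=d$. I would use the Kahane--Salem--Zygmund random polynomial: with random signs $\varepsilon_{\bm m}$, the function $f=\sum\varepsilon_{\bm m}\chi_{\bm m}$ satisfies
\[
\|f\|_{L_\infty(C_q^N)}\le C\sqrt{N\log q\,\binom Nd}
\]
in expectation. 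This follows from a union bound over the $q^N$ points of the grid, since each $f(x)$ is a subgaussian sum of $\binom Nd$ unimodular terms. Meanwhile $\sum|\widehat f|=\binom Nd$, so
\[
\sid\ge c\binom Nd^{1/2}(N\log q)^{-1/2}\ge c(q)^d (N/d)^{d/2}N^{-1/2}.
\]
This is $(N/d)^{\frac{d-1}{2}}$ up to a factor $d^{-1/2}$, which is harmless after adjusting $c(q)$.

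The main obstacle is making the lower bound's constants genuinely of the form $c(q)^d$ uniformly in $d\le N$, including the regime $d\approx N$. There, $\binom Nd$ is no longer comparable to $(N/d)^d$, and the loss has to be absorbed. I would handle this with the elementary bound $\binom Nd\ge (N/d)^d$, which holds for all $1\le d\le N$ and suffices. On the upper side, the real care is in using the spherical Bohnenblust--Hille constant $D(q)^d$ rather than an estimate in terms of $(q-1)d$.
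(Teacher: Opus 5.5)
Your proposal is correct. Its overall structure matches the paper: you reduce to Sidon constants via Corollary~\ref{gl-versus-unc}, take the lower bound from the tetrahedral subsystem via monotonicity, and get the spherical upper bound from H\"older plus Theorem~\ref{BH cyclic} with $\binom Nd\le (eN/d)^d$. It differs from the paper in two ingredients.

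\textbf{Upper bound for $\mathcal P_d(C_q^N)$ and $\mathcal T_d(C_q^N)$.} The paper does not use a Bohnenblust--Hille inequality on the grid here. It quotes the toric estimate $\boldsymbol{\chimon}(\mathcal P_{d,q}(\TT^N))\le c^d(N/d)^{(d-1)/2}$ from the Bohr-radius literature and transfers it to $C_q^N$ through the Remez bound \eqref{eq: sid-vs-chimon}. This gives a base of order $\log q$. You instead apply H\"older over the index set, using $\#\{|\bm m|=d\}\le\binom{N+d-1}{d}\le(2eN/d)^d$, together with Theorem~\ref{BH cyclic low}. This treats all three spaces by one uniform H\"older step and stays internal to the paper. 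The cost is a larger base, $D_1(q)=O(\sqrt q\log q)$.

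\textbf{Lower bound.} The paper uses $\sid(\mathcal T_d(\TT^N))\le\sid(\mathcal T_d(C_q^N))$ and cites \cite{defant2026local} for the order of the toric tetrahedral Sidon constant. This yields a Sidon lower bound with a $q$-independent base. You run the Kahane--Salem--Zygmund argument directly on $C_q^N$. The union bound over $q^N$ points costs $\sqrt{N\log q}$, and you correctly absorb the resulting factor $d^{-1/2}(\log q)^{-1/2}$ into $c(q)^d$ using $\binom Nd\ge (N/d)^d$. Your version is self-contained and elementary, with a mild $q$-dependence that the statement permits.

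One small correction to your commentary: $\binom Nd$ and $(N/d)^d$ are always comparable up to a factor $e^d$. So the regime $d\approx N$ is not a genuine obstacle; the only loss to absorb is the $N^{-1/2}$ from the union bound.
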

For the Sidon constant of $\mathcal P_d(C_q^N)$, estimates of the same order were previously obtained
in \cite[Proposition~3]{SloteVolbergZhangBH} by means of a
Bohnenblust--Hille inequality for cyclic groups. Our methods are different in
nature and treat simultaneously the three invariants $\sid$,
$\boldsymbol{\chi}$, and $\boldsymbol{g\!l}$ using techniques from local Banach
space theory. In both approaches, a key ingredient is a dimension-free
Remez-type inequality.

\begin{proof}
By \cite{defant2011bohnenblust}, see also
\cite[Theorem~2.16]{defant2026local} for an explicit formulation, there exists
a constant $c>0$, independent of $N$ and $d$, such that
\begin{equation}\label{eq:order-of-chimon}
\boldsymbol{\chimon}\big(\mathcal P_{d,q}(\TT^N)\big)
\le
c^d
\bigg(\frac{N}{d}\bigg)^{\frac{d-1}{2}}.
\end{equation}
Combining this estimate with 
\eqref{eq: sid-vs-chimon} 
and
\eqref{eq:relation-for-Lambda} gives the desired upper bound for
$\boldsymbol{\eta}\big(\mathcal P_d(C_q^N)\big)$ and $\boldsymbol{\eta}\big(\mathcal T_d(C_q^N)\big)$.

For the upper bound for spherical spaces, an important
difference appears: the corresponding trigonometric polynomials on $\TT^N$ are
no longer homogeneous of degree $d$, but rather have total degree at most
$(q-1)d$. Because of this, the result from
\cite[Theorem~2.16]{defant2026local} cannot be applied directly, and the
dependence on the degree has to be tracked more carefully.

Recall that
\[
\Lambda_{d,q,N}
=
\big\{
\alpha\in\{0,\ldots,q-1\}^N\colon
|\operatorname{supp}(\alpha)|=d
\big\}, \quad\text{and }\quad |\Lambda_{d,q,N}|
=
{\textstyle\binom{N}{d}}(q-1)^d.
\]
Let $P\in \mathcal B_d(C_q^N)$ be given by
\[
P(z)
=
\sum_{\alpha\in\Lambda_{d,q,N}} a_\alpha z^\alpha,
\quad z\in C_q^N.
\]
By H\"older's inequality with exponents
$\frac{2d}{d+1}$ and $\frac{2d}{d-1}$, with the usual interpretation when
$d=1$, we obtain
\[
\sum_{\alpha\in\Lambda_{d,q,N}} |a_\alpha|
\le
\big({\textstyle\binom{N}{d}}(q-1)^d\big)^{\frac{d-1}{2d}}
\bigg(
\sum_{\alpha\in\Lambda_{d,q,N}}
|a_\alpha|^{\frac{2d}{d+1}}
\bigg)^{\frac{d+1}{2d}}\le
c(q)^d
\bigg(\frac{N}{d}\bigg)^{\frac{d-1}{2}}
\|P\|_{L_\infty(C_q^N)},
\]
where in the last inequality we applied  Theorem~\ref{BH cyclic} together with the estimate $\binom{N}{d}\le (eN/d)^d$.

The desired upper estimates for $\boldsymbol{\chi}$ and $\boldsymbol{g\!l}$
follow from the standard inequalities
\[
\boldsymbol{g\!l}\big(\mathcal B_d(C_q^N)\big)
\le
\boldsymbol{\chi}\big(\mathcal B_d(C_q^N)\big)
\le
\sid\big(\mathcal B_d(C_q^N)\big).
\]

We now turn to the lower bounds. Consider the tetrahedral system
\[
\big(\chi_\alpha\big)_{\alpha\in\{0,1\}^N,\ |\alpha|=d}.
\]
Since $C_q^N\subset\TT^N$, the supremum norm on $\TT^N$ is larger than the
supremum norm on $C_q^N$. Hence
\[
\sid\big(\mathcal T_d(\TT^N)\big)
\le
\sid\big(\mathcal T_d(C_q^N)\big).
\]
Here $\mathcal T_d(\TT^N)$ denotes the space of tetrahedral $d$-homogeneous polynomials, i.e. $\mathcal T_d(\TT^N)=\mathcal P_{d,2}(\TT^N)=\mathcal B_{d,2}(\TT^N)$.
Moreover, the tetrahedral characters form a subsystem of the homogeneous
characters and of the spherical characters, therefore
% \[
% \sid\big(\mathcal T_d(C_q^N)\big)
% \le
% \sid\big(\mathcal P_d(C_q^N)\big).
% \]
%  Thus
\[
\sid\big(\mathcal T_d(\TT^N)\big)
\le
\sid\big(\mathcal T_d(C_q^N)\big)
\le
\sid\big(\mathcal P_d(C_q^N)\big)
\quad\text{and }\quad
\sid\big(\mathcal T_d(\TT^N)\big)
\le
\sid\big(\mathcal T_d(C_q^N)\big)
\le
\sid\big(\mathcal B_d(C_q^N)\big).
\]
The lower bound for $\sid\big(\mathcal T_d(\TT^N)\big)$ follows from
\cite[Theorem~2.16]{defant2026local}, and has order $\big(\frac{N}{d}\big)^{\frac{d-1}{2}}$
up to constants exponential in $d$. This gives the corresponding lower bounds
for the Sidon constants.

Finally, Corollary~\ref{gl-versus-unc} yields the corresponding lower bounds,
up to constants of the form $c(q)^d$, for the unconditional basis constant and
the Gordon--Lewis constant. This completes the proof.
\end{proof}

We conclude with two remarks.

\begin{remark}\label{optimal exponent} We conclude by noting that the exponent in Theorem~\ref{BH cyclic} is
optimal. Indeed, if the exponent $\frac{2d}{d+1}$ could be replaced by some
$p<\frac{2d}{d+1}$, then H\"older's inequality would give an upper bound for
$\sid\big(\mathcal B_d(C_q^N)\big)$ with a strictly smaller power of $N$ than
$\frac{d-1}{2}$. This would contradict the asymptotic estimate obtained in
Theorem~\ref{thm:invariants}.
\end{remark}

\begin{remark}
Whenever Proposition~\ref{prop: q>d} is applicable (i.e., $q \gg d$), the constants appearing in
the results above can be improved accordingly. For the sake of simplicity and
readability, we have chosen not to state these sharpened versions explicitly.
\end{remark}

\subsection{The spaces of degree/level at most $d$}

The previous results extend immediately to the spaces
$\mathcal T_{\le d}(C_q^N)$, $\mathcal P_{\le d}(C_q^N)$ and $\mathcal B_{\le d}(C_q^N)$.

\begin{corollary}
Let $\boldsymbol{\eta}$ denote any of the invariants
$\sid$, $\boldsymbol{\chi}$, or $\boldsymbol{g\!l}$.
Then, for every $q\ge2$ and $1\le d\le N$,
\[
\boldsymbol{\eta}\bigl(\mathcal P_{\le d}(C_q^N)\bigr),
\quad
\boldsymbol{\eta}\bigl(\mathcal T_{\le d}(C_q^N)\bigr),
\quad
\boldsymbol{\eta}\bigl(\mathcal B_{\le d}(C_q^N)\bigr)
\sim_{c(q)^d}
\Bigl(\frac Nd\Bigr)^{\frac{d-1}{2}},
\]
where the constants depend only on $q$.
\end{corollary}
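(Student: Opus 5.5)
The plan has two halves: an upper bound for the largest space $\mathcal B_{\le d}(C_q^N)$, and a lower bound for the smallest space $\mathcal T_d(C_q^N)$. Everything else is then squeezed between these by inclusions and by Corollary~\ref{gl-versus-unc}.

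\emph{Upper bound.} For the Sidon constant, take $f\in\mathcal B_{\le d}(C_q^N)$ and apply H\"older's inequality with exponents $\frac{2d}{d+1}$ and $\frac{2d}{d-1}$ over $\Lambda_{\le d,q,N}$. Combined with Theorem~\ref{BH cyclic low}, this gives
\[
\sum_{\bm m}|\widehat f(\bm m)|\le |\Lambda_{\le d,q,N}|^{\frac{d-1}{2d}}D_1(q)^d\|f\|_{L_\infty(C_q^N)}.
\]
Since $d\le N$, we have $|\Lambda_{\le d,q,N}|=\sum_{k\le d}(q-1)^k\binom Nk\le (d+1)(q-1)^d(eN/d)^d$. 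Using $(d+1)^{1/2}\le 2^d$ and absorbing these factors, we obtain $\sid(\mathcal B_{\le d}(C_q^N))\le c(q)^d(N/d)^{\frac{d-1}{2}}$.

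The inclusions $\mathcal T_{\le d}\subset\mathcal P_{\le d}\subset\mathcal B_{\le d}$ from \eqref{eq: important inclusion} are inclusions of subsystems of the character basis, so they transfer this upper bound for $\sid$ to all three spaces. The general inequalities $\boldsymbol{g\!l}\le\boldsymbol{\chi}\le\sid$ from \eqref{gl-inequality} then give the upper bounds for $\boldsymbol{\chi}$ and $\boldsymbol{g\!l}$. If one wanted to avoid Theorem~\ref{BH cyclic low}, an alternative is to use the level projections of Remark~\ref{rem:level-projection-discrete}, which have norm at most $C(d)$, together with $\sid(\mathcal B_k)\lesssim c(q)^k(N/k)^{(k-1)/2}$. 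However, $(N/k)^{(k-1)/2}\le e^{d}(N/d)^{(d-1)/2}$ requires a small check, so the direct H\"older route is cleaner.

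\emph{Lower bound.} Each of the three spaces contains $\mathcal T_d(C_q^N)$ as the span of a subfamily of its character basis, so the Sidon constant of each space dominates $\sid(\mathcal T_d(C_q^N))$. By Theorem~\ref{thm:invariants}, this is at least $c_1(q)^d(N/d)^{\frac{d-1}{2}}$. The remaining step is to pass to $\boldsymbol{\chi}$ and $\boldsymbol{g\!l}$, applying Corollary~\ref{gl-versus-unc} to each space:
\begin{itemize}
\item For $\Lambda=\Lambda_{\le d,q,N}$ (the space $\mathcal B_{\le d}$), we have $m\le (q-1)d$.
\item For $\mathcal P_{\le d}$ and $\mathcal T_{\le d}$, we have $m\le d$.
\end{itemize}
Either way, Corollary~\ref{gl-versus-unc} gives $\boldsymbol{g\!l}\ge (C\log q)^{-2d}2^{-(q-1)d}\,\sid$, which is of the form $c(q)^{-d}\sid$. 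Hence $\boldsymbol{\chi}\ge\boldsymbol{g\!l}\ge c(q)^{-d}c_1(q)^d(N/d)^{\frac{d-1}{2}}$.

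\emph{Main obstacle.} The only point that needs care is that the lower bound for $\boldsymbol{g\!l}$ cannot simply be inherited by inclusion, because $\boldsymbol{g\!l}$ is not monotone under passage to subspaces. This is exactly why I route the lower bound through $\sid$, which is monotone under passage to subsystems of the character basis, and only then invoke the dimension-free comparison of Corollary~\ref{gl-versus-unc} on the space itself. Beyond that, all that remains is bookkeeping to absorb the polynomial factors in $d$ into $c(q)^d$.
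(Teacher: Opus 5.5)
Your proof is correct, but it takes a different route from the paper's.

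\textbf{The paper's route.} The paper argues space by space through the top-level projection.
\begin{itemize}
\item For the lower bound, Remark~\ref{rem:level-projection-discrete} gives a projection $\Pi_d$ of $\mathcal B_{\le d}(C_q^N)$ onto $\mathcal B_d(C_q^N)$ with $\|\Pi_d\|\le c^d$. The ideal property \eqref{eq: GL inequality} then yields $\boldsymbol{g\!l}(\mathcal B_d(C_q^N))\le c^d\,\boldsymbol{g\!l}(\mathcal B_{\le d}(C_q^N))$. The exact-level lower bound of Theorem~\ref{thm:invariants} is pushed through $\boldsymbol{g\!l}\le\boldsymbol{\chi}\le\sid$.
\item For the upper bound, it splits $f=\sum_k f_k$ with the level projections and adds up the exact-level Sidon bounds.
\end{itemize}

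\textbf{Your route.} You avoid projections entirely at this stage.
\begin{itemize}
\item Your upper bound is H\"older plus Theorem~\ref{BH cyclic low} for the largest space. You pass it down to $\mathcal P_{\le d}$ and $\mathcal T_{\le d}$ by monotonicity of $\sid$ under subsystems of characters.
\item Your lower bound uses that same monotonicity to reach $\sid(\mathcal T_d(C_q^N))$. You then apply Corollary~\ref{gl-versus-unc} to the full index set of each space. Its hypotheses allow this: all three index sets lie in $\Lambda_{\le d,q,N}$ and have $m\le (q-1)d$.
\end{itemize}
Your diagnosis of the one delicate point is exactly right. $\boldsymbol{g\!l}$ cannot be inherited by inclusion, and the paper handles this with complementation plus the ideal property, whereas you handle it with the comparison corollary.

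\textbf{Comparison.} What your route buys is uniformity and economy.
\begin{itemize}
\item A single upper-bound computation covers all three spaces.
\item You never need a bounded projection onto the exact homogeneous degree $d$ inside $\mathcal P_{\le d}(C_q^N)$. The paper's claim that the $\mathcal P_{\le d}$ case is ``identical'' tacitly requires one, yet Remark~\ref{rem:level-projection-discrete} only treats spherical levels. That projection can be obtained, e.g., by rotation-averaging on $\TT^N$ combined with Theorem~\ref{thm:remez-support}, but it is an extra step.
\end{itemize}
What the paper's route buys is structural information: the top layer is uniformly complemented, so the lower bound transfers to any invariant with the ideal property.

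In substance, the upper-bound mechanism is the same in both: Theorem~\ref{BH cyclic low} is itself proved via the level projections. Both lower bounds ultimately rest on the same comparison machinery, the Remez transfer and the torus result behind Corollary~\ref{gl-versus-unc}.
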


\begin{proof}
We prove the result for $\mathcal B_{\le d}(C_q^N)$; the other cases are
analogous.

For the lower bound, Remark~\ref{rem:level-projection-discrete} gives a
projection $\Pi_d:\mathcal B_{\le d}(C_q^N)\to\mathcal B_d(C_q^N)$ with
$\|\Pi_d\|\le c^d$. Hence, by the ideal property of the Gordon--Lewis
constant,
\[
\boldsymbol{g\!l}\bigl(\mathcal B_d(C_q^N)\bigr)
\le
c^d\,
\boldsymbol{g\!l}\bigl(\mathcal B_{\le d}(C_q^N)\bigr).
\]
The lower bound for $\boldsymbol{g\!l}\bigl(\mathcal B_{\le d}(C_q^N)\bigr)$
now follows from Theorem~\ref{thm:invariants}. Since
$\boldsymbol{g\!l}\le\boldsymbol{\chi}\le\sid$, the same lower bound holds
for all three invariants.

For the upper bound, take $f\in\mathcal B_{\le d}(C_q^N)$ and write
$f=\sum_{k=0}^d f_k$, with $f_k\in\mathcal B_k(C_q^N)$. By
Remark~\ref{rem:level-projection-discrete}, $\|f_k\|_\infty\le c^d\|f\|_\infty$
for every $0\le k\le d$. Therefore, using the Sidon estimates for the exact
levels,
\[
\sum_{\operatorname{lev}(\bm m)\le d}|\widehat f(\bm m)|
\le
c^d
\sum_{k=0}^d
\sid\bigl(\mathcal B_k(C_q^N)\bigr)
\|f\|_\infty
\le
c(q)^d
\Bigl(\frac Nd\Bigr)^{\frac{d-1}{2}}
\|f\|_\infty.
\]
Thus
$\sid(\mathcal B_{\le d}(C_q^N))
\le
c(q)^d (N/d)^{(d-1)/2}$.
Since $\boldsymbol{g\!l}\le\boldsymbol{\chi}\le\sid$, this gives the upper
bound for all three invariants.

The proofs for $\mathcal P_{\le d}(C_q^N)$ and
$\mathcal T_{\le d}(C_q^N)$ are identical, using the corresponding projections
onto the exact homogeneous or tetrahedral levels and
Theorem~\ref{thm:invariants}.
\end{proof}

\section{The projection constant of spherical polynomial spaces}
\label{sec2}

We now study the asymptotic behaviour of the projection constants of the
level--$d$ spherical spaces. The main result of this section identifies a
universal limit governed by Hermite polynomials.

\begin{theorem}\label{thm: main spherical}
For every $q\ge 2$ and every integer $d\ge 0$,
\[
\lim_{N\to\infty}
\frac{\boldsymbol{\lambda}\big(\mathcal B_d(C_q^N)\big)}
     {\sqrt{\dim \mathcal B_d(C_q^N)}}
=
\frac{\mathbb E\big|\mathrm{He}_d(Z)\big|}{\sqrt{d!}},
\]
where $Z$ is a standard real Gaussian random variable.
\end{theorem}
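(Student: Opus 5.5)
The plan is to use the classical formula for projection constants of translation-invariant subspaces of $C(G)$ on a compact Abelian group. Since $\mathcal B_d(C_q^N)$ is spanned by characters, the translation-invariant orthogonal projection is minimal, via Rudin's averaging argument. This gives
\[
\boldsymbol{\lambda}\big(\mathcal B_d(C_q^N)\big)=\mathbb E\,|K_{d}|,
\qquad
K_d(x):=\sum_{|\operatorname{supp}(\bm m)|=d}\chi_{\bm m}(x),
\]
that is, the $L_1(C_q^N)$-norm of the reproducing kernel.

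Next I would compute $K_d$ explicitly. For each coordinate, $\sum_{m=1}^{q-1}x_j^m$ equals $q-1$ if $x_j=1$ and $-1$ otherwise. Expanding $\prod_j\big(1+t\sum_{m\ge1}x_j^m\big)$ and extracting the coefficient of $t^d$ shows that $K_d(x)=\mathcal K_d(w(x))$. Here $\mathcal K_d$ is the Krawtchouk polynomial
\[
\mathcal K_d(w)=\sum_{j=0}^d(-1)^j(q-1)^{d-j}\binom{w}{j}\binom{N-w}{d-j}.
\]
Under the uniform measure, $w(x)\sim\mathrm{Bin}(N,p)$ with $p=(q-1)/q$. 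Hence
\[
\frac{\boldsymbol{\lambda}}{\sqrt{\dim}}=\mathbb E\left|\frac{\mathcal K_d(W)}{\sqrt{\binom Nd(q-1)^d}}\right|,
\qquad W\sim\mathrm{Bin}(N,p).
\]
The normalization is natural, since $\mathbb E|K_d|^2=\dim\mathcal B_d$ by Parseval.

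The core step is a central-limit statement. Write $W=Np+\sqrt{Np(1-p)}\,Z_N$ with $Z_N\Rightarrow Z$. I would show that, uniformly for $Z_N$ in compact sets,
\[
\frac{\mathcal K_d(W)}{\sqrt{\binom Nd(q-1)^d}}=\frac{(-1)^d}{\sqrt{d!}}\,\mathrm{He}_d(Z_N)+o(1).
\]
The cleanest route is the generating function: the normalized generating function $\sum_d\mathcal K_d(w)t^d=(1+(q-1)t)^{N-w}(1-t)^{w}$. I would substitute $t=s/\sqrt{N p(1-p)}\cdot(\text{suitable factor})$ and take logarithms. The linear terms cancel because $W-Np$ is centred, and the quadratic term produces $\exp(sz-s^2/2)$, which is the Hermite generating function. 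Since $d$ is fixed, it suffices to compare the first $d$ Taylor coefficients, which are polynomials in $z$ with coefficients converging as $N\to\infty$. Equivalently, $\mathcal K_d(W)/\sqrt{\dim}$ is a fixed-degree polynomial in $Z_N$ whose coefficients converge to those of $\pm\mathrm{He}_d/\sqrt{d!}$. This formulation is cleaner, because it reduces convergence to coefficientwise convergence plus a moment bound.

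Finally, the $L_1$ norms must converge. The random variables $Y_N:=\mathcal K_d(W)/\sqrt{\dim}$ converge in distribution to $\mathrm{He}_d(Z)/\sqrt{d!}$, by the continuous mapping theorem applied to polynomials with converging coefficients. They are also bounded in $L_2$, since $\mathbb E|Y_N|^2=1$ by Parseval. They are therefore uniformly integrable, so $\mathbb E|Y_N|\to\mathbb E|\mathrm{He}_d(Z)|/\sqrt{d!}$. The case $d=0$ is trivial, with both sides equal to $1$.

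I expect the main obstacle to be the bookkeeping in the Krawtchouk-to-Hermite limit, that is, verifying that the normalized coefficients converge to exactly the Hermite coefficients for general $p$. I would handle this first via the generating-function expansion, and fall back on the three-term recurrence for Krawtchouk polynomials if the expansion becomes messy. Under scaling, the recurrence converges to $\mathrm{He}_{d+1}=x\mathrm{He}_d-d\,\mathrm{He}_{d-1}$, and induction on $d$ then gives the coefficient limits.
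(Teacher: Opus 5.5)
Your proposal is correct and follows the same route as the paper. The steps match one by one: the Rudin-type formula $\boldsymbol{\lambda}(\mathcal B_d(C_q^N))=\mathbb E|K_{\mathcal B_d(C_q^N)}|$, the identification of the kernel with the Krawtchouk polynomial $K_d^{(q)}(w(x);N)$ evaluated at a $\mathrm{Bin}(N,\frac{q-1}{q})$ variable, a generating-function limit to $(-1)^d\mathrm{He}_d/\sqrt{d!}$, and uniform integrability from $\mathbb E|Y_N|^2=1$. The only difference is cosmetic: you get the Krawtchouk-to-Hermite limit as coefficientwise convergence of a degree-$d$ polynomial in $Z_N$, whereas the paper evaluates at $\lfloor Np+\sigma\sqrt N\,x\rfloor$, extracts the coefficient by a Cauchy integral over $|t|=T$, and then applies Slutsky's theorem.
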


The proof is based on an integral representation of projection constants for
subspaces generated by characters. More precisely, if
$\Lambda\subset \mathbb Z_q^N$, then
\begin{equation}\label{hamming-int}
\boldsymbol{\Lambda}\big(\mathcal P_{\Lambda}(C_q^N)\big)
=
\mathbb E\bigg[
\bigg|
\sum_{\bm m\in\Lambda}\chi_{\bm m}
\bigg|
\bigg],
\end{equation}
where $\mathcal P_{\Lambda}(C_q^N)$ is endowed with the supremum norm inherited
from $C(C_q^N)$.

This identity is a particular case of a general theorem for compact Abelian
groups proved in \cite[Theorem~2.1]{defant2024projection}, building on
classical ideas of Rudin~\cite{rudin1962projections}; see also
\cite[Theorem~III.B.13]{wojtaszczyk1996banach}. We shall denote by
\[
K_{\mathcal P_{\Lambda}(C_q^N)}
:=
\sum_{\bm m\in\Lambda}\chi_{\bm m}
\]
the associated kernel.

In the spherical case the kernel
$K_{\mathcal B_d(C_q^N)}$ is radial, that is, it depends on a point
$x\in C_q^N$ only through its Hamming weight. This allows us to identify it
with a Krawtchouk polynomial and thereby study the projection constant via
the asymptotic theory of these polynomials. The key step is therefore to
obtain an explicit expression for the kernel in terms of
\[
w(x):=\#\{k:x_k\neq 1\}.
\]

\begin{proposition}\label{prop:spherical-kernel-krawtchouk}
Let $x\in C_q^N$ and let $d\ge1$. Then
\[
K_{\mathcal B_d(C_q^N)}(x)
=
\sum_{j=0}^{\min(d,w(x))}
(-1)^j(q-1)^{d-j}
\binom{w(x)}{j}
\binom{N-w(x)}{d-j}.
\]
\end{proposition}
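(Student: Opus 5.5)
The plan is to exploit the product structure of characters and the fact that the kernel sums over all characters with support of size exactly $d$. First I write
\[
K_{\mathcal B_d(C_q^N)}(x)=\sum_{\substack{S\subset[N]\\|S|=d}}\ \prod_{k\in S}\Big(\sum_{m=1}^{q-1}x_k^{m}\Big),
\]
since a character $\chi_{\bm m}$ with $\operatorname{supp}(\bm m)=S$ factors as $\prod_{k\in S}x_k^{m_k}$ with each $m_k$ ranging independently over $\{1,\dots,q-1\}$. Summing over these choices gives the product of one-dimensional sums over $k\in S$.

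Next I evaluate the one-dimensional sum $\phi(x_k):=\sum_{m=1}^{q-1}x_k^m$ for $x_k\in C_q$. By the geometric sum identity for $q$-th roots of unity, $\sum_{m=0}^{q-1}x_k^m$ equals $q$ if $x_k=1$ and $0$ otherwise. Hence $\phi(x_k)=q-1$ when $x_k=1$, and $\phi(x_k)=-1$ when $x_k\neq1$. Thus $\phi$ depends only on whether the coordinate is active, which is the source of the radial structure.

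Finally, I fix $x$ with $w:=w(x)$ and split each $d$-subset $S$ according to $j:=|S\cap\{k:x_k\neq1\}|$. Each such $S$ contributes $(-1)^j(q-1)^{d-j}$. The number of subsets with a given $j$ is $\binom{w}{j}\binom{N-w}{d-j}$: choose $j$ coordinates among the $w$ nontrivial ones and $d-j$ among the $N-w$ trivial ones. The index $j$ ranges from $0$ to $\min(d,w)$, and terms with $d-j>N-w$ vanish automatically because the binomial coefficient is zero. Summing over $j$ yields the stated formula.

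There is no real obstacle; the only step requiring care is the root-of-unity evaluation of $\phi$, including the Boolean case $q=2$, where $\phi(x_k)=x_k$ is consistent with it. I would also note that the resulting expression is, up to normalization, the Krawtchouk polynomial $K_d(w;N,q)$.
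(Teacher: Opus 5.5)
Your proposal is correct and follows the paper's proof: group the characters by their support, factor each group into one-dimensional sums equal to $q-1$ or $-1$, and count the $d$-subsets by how many nontrivial coordinates they contain. The paper phrases this last step as evaluating the $d$-th elementary symmetric polynomial in the values $u_k$, which is the same computation.
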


\begin{proof}
By definition,
\[
K_{\mathcal B_d(C_q^N)}(x)
=
\sum_{\substack{\bm m\in\mathbb Z_q^N\\
|\operatorname{supp}(\bm m)|=d}}
\chi_{\bm m}(x)
=
\sum_{\substack{\bm m\in\mathbb Z_q^N\\
|\operatorname{supp}(\bm m)|=d}}
\prod_{k\colon m_k\neq0}x_k^{m_k}.
\]
Grouping the sum according to the support
$S=\operatorname{supp}(\bm m)\subset\{1,\ldots,N\}$, with $|S|=d$, we get
\[
K_{\mathcal B_d(C_q^N)}(x)
=
\sum_{|S|=d}
\prod_{k\in S}
\bigg(\sum_{j=1}^{q-1}x_k^j\bigg).
\]
For each coordinate $k$,
\[
\sum_{j=1}^{q-1}x_k^j
=
\begin{cases}
q-1, & \text{if } x_k=1,\\[4pt]
-1,  & \text{if } x_k\neq 1,
\end{cases}
\]
since $\sum_{j=0}^{q-1}x_k^j=0$ when $x_k\neq1$, whereas it equals $q$ when
$x_k=1$.

Set
\[
u_k:=\sum_{j=1}^{q-1}x_k^j.
\]
There are $N-w(x)$ coordinates for which $u_k=q-1$ and $w(x)$ coordinates for
which $u_k=-1$. Thus
\[
K_{\mathcal B_d(C_q^N)}(x)
=
\sum_{|S|=d}\prod_{k\in S}u_k,
\]
which is the $d$-th elementary symmetric polynomial in the numbers $u_k$.
Choosing $d-j$ factors equal to $q-1$ and $j$ factors equal to $-1$ yields the
formula.
\end{proof}

In particular, the formula shows that $K_{\mathcal B_d(C_q^N)}(x)$ depends on
$x$ only through its Hamming weight $w(x)$ and is given by an explicit
polynomial expression in $w(x)$. This observation motivates the introduction
of the classical Krawtchouk polynomials associated with the Hamming scheme.

\subsection{Krawtchouk polynomials}

Fix  $q\ge2$ and $N\in\mathbb N$, and let
$d,h\in\{0,1,\ldots,N\}$. The \emph{$q$-ary Krawtchouk polynomials} form one
of the classical families of discrete orthogonal polynomials naturally
associated with the Hamming scheme. They are defined by
\[
K_d^{(q)}(h;N)
=
\sum_{j=0}^{\min(d,h)}
(-1)^j(q-1)^{d-j}
\binom{h}{j}
\binom{N-h}{d-j},
\quad d\ge1,
\]
and by $K_0^{(q)}(h;N)\equiv1$. In particular,
\[
K_1^{(q)}(h;N)=(q-1)N-qh,
\quad
K_d^{(q)}(0;N)=(q-1)^d\binom{N}{d},
\quad
K_d^{(q)}(N;N)=(-1)^d\binom{N}{d}.
\]

These polynomials depend only on the parameters $(q,N,d,h)$ and play a
fundamental role in the harmonic analysis of the Hamming scheme, as well as
in the theory of error-correcting codes and association schemes. Standard
references for their properties and normalizations include
\cite[\S9.11]{KoekoekLeskySwarttouw2010}, \cite[Ch.~5]{Szego1975}, and
\cite[\S18.19--\S18.23]{DLMF}, while their combinatorial and algebraic
significance is developed in \cite[Ch.~2]{MacWilliamsSloane} and
\cite[Chs.~II--III]{BannaiIto}.

Comparing the definition above with the explicit expression obtained for the
kernel of the level--$d$ spherical space, we obtain
\[
K_{\mathcal B_d(C_q^N)}(x)
=
K_d^{(q)}\big(w(x);N\big).
\]
This identification is the standard passage from zonal spherical functions on
the Hamming scheme to Krawtchouk polynomials, and it will be used repeatedly
below; see \cite[Ch.~2]{MacWilliamsSloane} and \cite[Ch.~III]{BannaiIto}.

\subsection{Generating function}

The Krawtchouk family admits a simple and explicit generating function, which
provides a compact way to encode the sequence
$\{K_d^{(q)}(h;N)\}_{d=0}^N$ and plays a central role in asymptotic analysis.
For classical references, see for instance
\cite[Eq.~(9.11.1)]{KoekoekLeskySwarttouw2010} or
\cite[DLMF~(18.23.13)]{DLMF}. For completeness, we include a brief proof.

\begin{proposition}\label{prop:generating-function}
For every $q\ge2$, $N\ge1$, and $h\in\{0,1,\ldots,N\}$,
\[
\sum_{d=0}^{N} K_d^{(q)}(h;N)z^d
=
\big(1+(q-1)z\big)^{N-h}(1-z)^h.
\]
\end{proposition}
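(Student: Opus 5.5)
The plan is to reuse the elementary-symmetric-polynomial structure already exhibited in the proof of Proposition~\ref{prop:spherical-kernel-krawtchouk}, rather than manipulate binomial sums directly. Fix $h\in\{0,\ldots,N\}$ and consider $N$ numbers $u_1,\ldots,u_N$, of which $N-h$ equal $q-1$ and $h$ equal $-1$. The basic identity we will use is
\[
\prod_{k=1}^N(1+u_kz)=\sum_{d=0}^N e_d(u_1,\ldots,u_N)\,z^d,
\]
where $e_d$ denotes the $d$-th elementary symmetric polynomial and $e_0=1$. This is immediate upon expanding the product: choosing the term $u_kz$ from the factors indexed by a set $S$ with $|S|=d$, and the term $1$ from the others, contributes $z^d\prod_{k\in S}u_k$.

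Next we identify the coefficients. For $d\ge1$ we compute $e_d(u)$ exactly as in the proof of Proposition~\ref{prop:spherical-kernel-krawtchouk}: a subset $S$ with $|S|=d$ containing $j$ of the $h$ indices with $u_k=-1$ and $d-j$ of the $N-h$ indices with $u_k=q-1$ contributes $(-1)^j(q-1)^{d-j}$. There are $\binom{h}{j}\binom{N-h}{d-j}$ such subsets. Summing over $j$ recovers precisely $K_d^{(q)}(h;N)$, and the terms with $j>\min(d,h)$ vanish automatically because the binomial coefficients are zero there. For $d=0$, the convention $K_0^{(q)}\equiv1$ matches $e_0=1$.

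Finally, grouping equal factors on the left-hand side gives
\[
\prod_{k=1}^N(1+u_kz)=\bigl(1+(q-1)z\bigr)^{N-h}(1-z)^h,
\]
which is the claimed identity. No step here is a real obstacle. The only point needing care is bookkeeping: the convention for $d=0$, and the vanishing of binomial coefficients outside the range $0\le j\le h$, $0\le d-j\le N-h$. Both are handled by the remarks above. As an alternative route, one could expand both binomials on the right-hand side by the binomial theorem and take the Cauchy product; this gives the same coefficient directly.
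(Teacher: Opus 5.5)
Your proof is correct, but your main argument takes a different route from the paper's. The paper does not use elementary symmetric polynomials here. It substitutes the explicit formula for $K_d^{(q)}(h;N)$ into $\sum_d K_d^{(q)}(h;N)z^d$, interchanges the sums over $d$ and $j$, and reindexes the inner sum by $a=d-j$. It then applies the binomial theorem twice, first to obtain $(1+(q-1)z)^{N-h}$ and then $(1-z)^h$. This is exactly the ``alternative route'' you mention in your last sentence.

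Your version instead reads $K_d^{(q)}(h;N)$ as $e_d$ of the multiset consisting of $N-h$ copies of $q-1$ and $h$ copies of $-1$. You then get the identity by expanding $\prod_k(1+u_kz)$ and grouping equal factors.

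The paper's route is self-contained algebra from the definition and needs no combinatorial interpretation. Yours is more conceptual and makes the vanishing-binomial bookkeeping automatic rather than something to track through an index shift. It also reuses the subset count already done for the kernel. Indeed, taking $u_k=\sum_{j=1}^{q-1}x_k^j$ as in Proposition~\ref{prop:spherical-kernel-krawtchouk}, your argument shows directly that
\[
\sum_{d=0}^N K_{\mathcal B_d(C_q^N)}(x)\,z^d=\prod_{k=1}^N\Bigl(1+z\sum_{j=1}^{q-1}x_k^j\Bigr).
\]
This is the exact spherical analogue of the product formula for the homogeneous kernels in Proposition~\ref{prop:GF-slice}.
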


\begin{proof}
Let $z$ be a formal variable. Using the explicit formula for
$K_d^{(q)}(h;N)$, we compute
\begin{align*}
\sum_{d=0}^{N} K_d^{(q)}(h;N)z^d
&=
\sum_{d=0}^{N}
\sum_{j=0}^{\min(d,h)}
(-1)^j(q-1)^{d-j}
\binom{h}{j}\binom{N-h}{d-j}z^d \\
&=
\sum_{j=0}^{h}
(-1)^j\binom{h}{j}z^j
\sum_{d=0}^{N}
(q-1)^{d-j}\binom{N-h}{d-j}z^{d-j}.
\end{align*}
In the inner sum, set $a=d-j$. Since $\binom{N-h}{a}=0$ unless
$0\le a\le N-h$, we obtain
\[
\sum_{d=0}^{N}
(q-1)^{d-j}\binom{N-h}{d-j}z^{d-j}
=
\sum_{a=0}^{N-h}
\binom{N-h}{a}\big((q-1)z\big)^a
=
\big(1+(q-1)z\big)^{N-h}.
\]
Therefore,
\[
\sum_{d=0}^{N} K_d^{(q)}(h;N)z^d
=
\bigg(\sum_{j=0}^{h}\binom{h}{j}(-z)^j\bigg)
\big(1+(q-1)z\big)^{N-h}
=
(1-z)^h\big(1+(q-1)z\big)^{N-h},
\]
which completes the proof.
\end{proof}

\subsection{Orthogonality and probabilistic normalization}

Let $X=(X_1,\ldots,X_N)$ be the canonical random vector on $C_q^N$, with
independent coordinates $X_k\sim\mathrm{Uniform}(C_q)$. Define the Hamming
weight
\[
W_N:=w(X)=\#\{k\colon X_k\neq1\}.
\]
Then $W_N$ is a binomial random variable with parameters $(N,p)$, namely
\begin{equation}\label{binomial}
W_N\sim \mathrm{Bin}(N,p),
\quad
p=\frac{q-1}{q}.
\end{equation}

With respect to this distribution, the Krawtchouk polynomials form an
orthogonal family. Equivalently, for all $d,d'\in\{0,\ldots,N\}$,
\[
\mathbb E\big[
K_d^{(q)}(W_N;N)\,K_{d'}^{(q)}(W_N;N)
\big]
=
(q-1)^d\binom{N}{d}\,\delta_{d,d'}.
\]
In particular,
\begin{equation}\label{orthogonality}
\mathbb E\big[
\big|K_d^{(q)}(W_N;N)\big|^2
\big]
=
(q-1)^d\binom{N}{d},
\end{equation}
which coincides with the dimension of the level--$d$ spherical space.

This probabilistic formulation is equivalent to the classical orthogonality
relations for Krawtchouk polynomials with respect to the binomial weight
$\binom{N}{h}(q-1)^h$. See, for instance, \cite[Ch.~5]{Szego1975},
\cite[\S9.11]{KoekoekLeskySwarttouw2010}, and
\cite[\S18.19--\S18.23]{DLMF}.

\medskip
\noindent\textbf{Three-term recurrence.}
For completeness, we record an additional classical property of the
$q$-ary Krawtchouk polynomials, although it will not be needed in the main
arguments.

For $d\ge1$, the Krawtchouk polynomials satisfy the classical three-term
recurrence relation
\[
(d+1)\,K_{d+1}^{(q)}(h;N)
=
\bigl((q-1)N-qh-(q-2)d\bigr)\,K_d^{(q)}(h;N)
-
(q-1)(N-d+1)\,K_{d-1}^{(q)}(h;N),
\]
with initial conditions $K_0^{(q)}\equiv1$ and
$K_1^{(q)}(h;N)=(q-1)N-qh$.
This recurrence allows one to compute the family inductively in the degree
and can be found, for instance, in
\cite[\S9.11]{KoekoekLeskySwarttouw2010} and \cite[Ch.~5]{Szego1975}.

\subsection{From Krawtchouk to Hermite: a probabilistic limit}

We recall that a sequence of random variables $X_N$ converges in distribution
to $X$, written $X_N\overset{D}{\longrightarrow}X$, if
\[
\lim_{N\to\infty}\mathbb P(X_N\le t)=\mathbb P(X\le t)
\]
for every continuity point $t$ of the distribution function of $X$.
Equivalently,
\[
\lim_{N\to\infty}\mathbb E[\varphi(X_N)]
=
\mathbb E[\varphi(X)]
\]
for every bounded continuous function $\varphi\colon\mathbb R\to\mathbb R$.
We also recall that $X_N$ converges to $X$ in probability, written
$X_N\overset{\mathbb P}{\longrightarrow}X$, if, for every $\varepsilon>0$,
\[
\lim_{N\to\infty}\mathbb P(|X_N-X|>\varepsilon)=0.
\]

The following theorem describes the asymptotic behavior of Krawtchouk
polynomials when evaluated at the random Hamming weight.

\begin{theorem}\label{thm:probabilistic-limit}
Let $\{W_N\}_{N\ge1}$ be a sequence of random variables such that
\[
W_N \sim \mathrm{Bin}(N,p),
\qquad
p=\frac{q-1}{q}.
\]
Then, for every fixed $d\ge0$,
\[
\frac{K_d^{(q)}(W_N;N)}{\sqrt{(q-1)^d \binom{N}{d}}}
\;\overset{D}{\longrightarrow}\;
\frac{\mathrm{He}_d(Z)}{\sqrt{d!}},
\qquad N\to\infty,
\]
where $Z$ is a standard Gaussian random variable and
$\mathrm{He}_d$ denotes the $d$-th probabilists' Hermite polynomial.
\end{theorem}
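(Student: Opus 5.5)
The plan is to reduce the statement to the classical central limit theorem for the binomial weight, combined with an explicit expansion of the Krawtchouk polynomial in the standardized variable. Set
\[
Z_N:=\frac{W_N-Np}{\sqrt{Np(1-p)}},\qquad p=\frac{q-1}{q},
\]
so that $Z_N\overset{D}{\longrightarrow}Z$ by the de Moivre--Laplace theorem. Since $K_d^{(q)}(h;N)$ is a polynomial of degree $d$ in $h$, the normalized quantity
\[
\frac{K_d^{(q)}(W_N;N)}{\sqrt{(q-1)^d\binom Nd}}
\]
is a polynomial $Q_{N}(Z_N)$ of degree $d$ in $Z_N$. I will show that the coefficients of $Q_N$ converge to those of $\mathrm{He}_d(x)/\sqrt{d!}$. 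Once this is established, write $Q_N(Z_N)=Q(Z_N)+(Q_N-Q)(Z_N)$ with $Q:=\mathrm{He}_d/\sqrt{d!}$. The first term converges in distribution to $Q(Z)$ by the continuous mapping theorem. The second term tends to $0$ in probability, because its coefficients tend to $0$ and $Z_N$ is tight. Slutsky's lemma then finishes the argument.

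To identify the limit polynomial I will use Proposition~\ref{prop:generating-function}. With $h=Np+\sqrt{Np(1-p)}\,x$, take logarithms of $(1+(q-1)z)^{N-h}(1-z)^h$ and substitute $z=t/\sqrt{N(q-1)}$. Write $\sigma=\sqrt{Np(1-p)}$ and note that $p(1-p)=(q-1)/q^2$. The linear term in $t$ is
\[
\bigl((q-1)(N-h)-h\bigr)z=-qh'z,\qquad h'=h-Np,
\]
which equals $-q\sigma x\,t/\sqrt{N(q-1)}=-xt$. The quadratic term is
\[
-\tfrac12\bigl((q-1)^2(N-h)+h\bigr)z^2,
\]
which tends to $-\tfrac12 t^2$, since $(q-1)^2(1-p)+p=q-1$ after dividing by $N$. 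One must be careful with signs here: the limit is $e^{-xt-t^2/2}$, whose coefficients are $(-1)^d\mathrm{He}_d(x)/d!$. The parity $\mathrm{He}_d(-x)=(-1)^d\mathrm{He}_d(x)$ and the symmetry of $Z$ make this sign irrelevant for convergence in distribution, or one can replace $x$ by $-x$ from the start. Higher cumulant terms are $O(N^{-1/2})$ locally uniformly in $x$ and in $t$ near $0$.

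Extracting the coefficient of $t^d$ via Cauchy's formula on a small circle gives $K_d^{(q)}(h;N)\,(N(q-1))^{-d/2}\to(\pm1)^d\mathrm{He}_d(x)/d!$ uniformly for $x$ in compacts. Since $\binom Nd\sim N^d/d!$, this yields $Q_N(x)\to\pm\mathrm{He}_d(x)/\sqrt{d!}$ uniformly on compacts. Because the $Q_N$ are polynomials of bounded degree, uniform convergence on a compact interval implies convergence of the coefficients, so the Slutsky step above applies directly.

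The main obstacle is justifying the uniform expansion of the logarithm and the interchange with coefficient extraction. Concretely, one needs $|z|$ small enough that both $\log(1+(q-1)z)$ and $\log(1-z)$ are analytic, with error bounds $O(N\,|z|^3)=O(N^{-1/2})$ uniform over $|t|\le1$ and $x$ in compacts. This is routine once the scaling is fixed. Finally, the case $d=0$ is trivial since $K_0\equiv1=\mathrm{He}_0$.
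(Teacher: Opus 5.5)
Your proposal is correct and follows essentially the paper's argument. The paper proves locally uniform convergence of the rescaled Krawtchouk polynomials to $(-1)^d\mathrm{He}_d/\sqrt{d!}$ via the generating function, the logarithmic expansion at scale $z\asymp t/\sqrt N$, and Cauchy's formula, then concludes with the CLT for $Z_N$, tightness, Slutsky, and the parity of $\mathrm{He}_d$ together with the symmetry of $Z$ to remove the sign. The only difference is cosmetic: you evaluate at the real point $Np+\sqrt{Np(1-p)}\,x$ instead of its integer part, which avoids the floor error $\varepsilon_N(x)$ but requires the easy remark (via the generalized binomial theorem) that Proposition~\ref{prop:generating-function} still holds for non-integer $h$ when $K_d^{(q)}(\cdot\,;N)$ is read as a polynomial in $h$.
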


The guiding intuition behind this result is that Krawtchouk polynomials form
an orthogonal basis associated with the binomial distribution, while Hermite
polynomials play the analogous role for the Gaussian law. Consequently, when
the binomial distribution is appropriately rescaled and converges to a
Gaussian limit, the corresponding orthogonal polynomials are expected to
converge to Hermite polynomials. Indeed, by the central limit theorem, the
centered and normalized Hamming weight
\[
Z_N := \frac{W_N-Np}{\sigma\sqrt{N}},
\qquad \sigma^2=p(1-p),
\]
converges in distribution to a standard Gaussian random variable $Z$ as
$N\to\infty$. The probabilistic convergence above is a consequence of a
stronger, deterministic asymptotic statement, which we now formulate.

\begin{proposition}\label{prop:krawtchouk-hermite-deterministic}
Fix $q\ge2$ and $d\ge0$, and define
\[
p:=\frac{q-1}{q},
\qquad
\sigma^2:=p(1-p).
\]
Then, for every compact set $K\subset\mathbb R$,
\[
\frac{K_d^{(q)}\big(\lfloor Np+\sigma\sqrt N\,x\rfloor;N\big)}
     {\sqrt{(q-1)^d\binom{N}{d}}}
\longrightarrow
\frac{(-1)^d}{\sqrt{d!}}\,\mathrm{He}_d(x)
\]
uniformly for $x\in K$, as $N\to\infty$.
\end{proposition}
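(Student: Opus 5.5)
The plan is to read off $K_d^{(q)}(h;N)$ as the $d$-th Taylor coefficient of the generating function in Proposition~\ref{prop:generating-function}, rescale the variable by $z=u/\sqrt N$, and show that the rescaled generating function converges coefficientwise to the Hermite generating function
\[
e^{xt-t^2/2}=\sum_{n\ge0}\mathrm{He}_n(x)\frac{t^n}{n!}.
\]
Fix a compact set $K\subset\mathbb R$. For $x\in K$ set $h=h_N(x):=\lfloor Np+\sigma\sqrt N\,x\rfloor$, so that $h=Np+\sigma\sqrt N x+\theta$ with $\theta\in(-1,0]$. For $N$ large we have $0\le h\le N$ uniformly on $K$.

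Working with formal power series in $z$, Proposition~\ref{prop:generating-function} gives $K_d^{(q)}(h;N)=[z^d]\exp\big(G_N(z)\big)$, where
\[
G_N(z)=(N-h)\log(1+(q-1)z)+h\log(1-z)=\sum_{k\ge1}g_k z^k,
\qquad
g_k=\frac{(-1)^{k+1}}{k}\big((N-h)(q-1)^k-(-1)^{k+1}\,h\big)\cdot(\pm1),
\]
that is, $g_k=\frac{(-1)^{k+1}(q-1)^k(N-h)-h}{k}$. The key fact is that $[z^d]\exp(G)$ is a fixed universal polynomial in $g_1,\dots,g_d$, whose monomials $\prod g_{k_i}$ satisfy $\sum k_i=d$. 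After the substitution $z=u/\sqrt N$, we get
\[
N^{-d/2}K_d^{(q)}(h;N)=[u^d]\exp\Big(\sum_k g_kN^{-k/2}u^k\Big).
\]
It therefore suffices to find the uniform limits of the rescaled coefficients $\tilde g_k:=g_kN^{-k/2}$ for $k\le d$.

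These limits are computed as follows.
\begin{itemize}
\item For $k=1$: $g_1=(q-1)(N-h)-h=(q-1)N-qh$. Since $qp=q-1$, the $N$-terms cancel and $g_1=-q\sigma\sqrt N x-q\theta$. As $q\sigma=\sqrt{q-1}$, this gives $\tilde g_1\to-\sqrt{q-1}\,x$ uniformly on $K$.
\item For $k=2$: $g_2=-\tfrac12\big((N-h)(q-1)^2+h\big)=-\tfrac N2\big((1-p)(q-1)^2+p\big)+O(\sqrt N)=-\tfrac{N(q-1)}2+O(\sqrt N)$. Hence $\tilde g_2\to-(q-1)/2$.
\item For $k\ge3$: $|g_k|\le C_{q,k}N$, so $\tilde g_k=O(N^{1-k/2})\to0$.
\end{itemize}
All error terms are uniform in $x\in K$, because $|x|$ and $|\theta|$ are bounded there. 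Since the universal polynomial is continuous, we obtain, uniformly on $K$,
\[
N^{-d/2}K_d^{(q)}(h;N)\to[u^d]\exp\big(-\sqrt{q-1}\,xu-\tfrac{q-1}2u^2\big).
\]
Putting $t=-\sqrt{q-1}\,u$ in the Hermite generating function, this limit equals $(-1)^d(q-1)^{d/2}\mathrm{He}_d(x)/d!$.

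Finally, since $d$ is fixed, $\binom Nd\sim N^d/d!$, so $\sqrt{(q-1)^d\binom Nd}\sim(q-1)^{d/2}N^{d/2}/\sqrt{d!}$. Dividing, the normalized quotient converges uniformly on $K$ to $(-1)^d\mathrm{He}_d(x)/\sqrt{d!}$.

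The only delicate step is justifying the coefficientwise passage to the limit. Phrasing everything through the finite polynomial expression of $[u^d]\exp(\cdot)$ in $\tilde g_1,\dots,\tilde g_d$ avoids any analytic convergence issues for the series itself. What remains is bookkeeping: checking that the $O(1)$ floor error and the $O(\sqrt N)$ correction in $g_2$ vanish after scaling, uniformly in $x\in K$. The case $d=0$ is trivial.
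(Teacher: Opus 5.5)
Your proof is correct, but it follows a different route from the paper's. The paper argues analytically. It substitutes $z=-t/(\sqrt N\,\sqrt{q-1})$ with $t$ on a fixed circle $|t|=T$ and uses the principal-branch Taylor expansion of $\log(1+w)$ with a cubic remainder. This gives $\Phi_N(t,x)=xt-t^2/2+O(N^{-1/2})$ uniformly for $|x|\le M$, $|t|=T$. It then exponentiates and extracts the $d$-th coefficient by Cauchy's integral formula, bounding the error integral by $\sup|r_N|$ on the circle.

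You stay instead in the ring of formal power series. The coefficient $[z^d]\exp\bigl(\sum_k g_kz^k\bigr)=\sum_{m_1+2m_2+\cdots+dm_d=d}\prod_k g_k^{m_k}/m_k!$ is weighted-homogeneous of weight $d$. So the rescaling $z=u/\sqrt N$ reduces the whole statement to the uniform convergence of the finitely many scalars $\tilde g_1,\dots,\tilde g_d$. Your limits $-\sqrt{q-1}\,x$, $-(q-1)/2$ and $0$ are right, as are the identification via $t=-\sqrt{q-1}\,u$ and the normalization by $\binom Nd$.

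Your route dispenses with branch choices, complex Taylor remainders and contour estimates. It also makes transparent that only the first two log-coefficients survive: the centering at $Np$ kills the order-$N$ part of $g_1$, and the choice of $\sigma$ normalizes $g_2$. Tracking the $O(N^{-1/2})$ errors of the $\tilde g_k$, and using that a polynomial is locally Lipschitz, even recovers the paper's rate $O(N^{-1/2})$. The paper's analytic route, in turn, gives uniform control of the entire rescaled generating function on a circle. It is also the template reused in Proposition~\ref{prop:CLTreduction}, where the same expansion is carried out in probability.

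Two cosmetic points. First, your first displayed formula for $g_k$ carries a stray factor $\cdot(\pm1)$. Without it, it coincides with the formula $g_k=\bigl((-1)^{k+1}(q-1)^k(N-h)-h\bigr)/k$ that you actually use. Second, in the continuity step you should say explicitly that the $\tilde g_k$ remain uniformly bounded on $K$. That is what turns uniform convergence of the inputs into uniform convergence of the polynomial.
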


\begin{proof}
Fix $M,T>0$ and work uniformly for $|x|\le M$. Throughout the proof, all
implicit constants may depend on $M$ and $T$, but are independent of $N$.

For $N$ sufficiently large, define
\[
h_N(x):=\big\lfloor Np+\sigma\sqrt N\,x\big\rfloor\in\{0,\ldots,N\}.
\]
Then
\[
h_N(x)=Np+\sigma\sqrt N\,x+\varepsilon_N(x),
\qquad
|\varepsilon_N(x)|\le 1.
\]
Introduce the complex variable
\[
z:=-\,\frac{t}{\sqrt N\,\sqrt{q-1}},
\qquad |t|=T.
\]
By Proposition~\ref{prop:generating-function}, for every
$h\in\{0,\ldots,N\}$,
\[
\sum_{m=0}^{N} K_m^{(q)}(h;N)\,z^m
=
(1+(q-1)z)^{N-h}(1-z)^h.
\]
We apply this identity with $h=h_N(x)$. Set
\[
\Phi_N(t,x)
:=
(N-h_N(x))\log(1+(q-1)z)+h_N(x)\log(1-z).
\]
Since $|z|=O(N^{-1/2})$ and $|(q-1)z|=O(N^{-1/2})$, Taylor's expansion of the
logarithm around zero, taken in the principal branch, gives
\[
\log(1+w)=w-\frac{w^2}{2}+\rho_3(w),
\qquad
|\rho_3(w)|\le C|w|^3
\]
for $|w|$ sufficiently small. Applying this expansion, we obtain
\[
\begin{aligned}
\Phi_N(t,x)
&=
(N-h_N(x))
\bigg((q-1)z-\frac{(q-1)^2z^2}{2}\bigg)
+h_N(x)\bigg(-z-\frac{z^2}{2}\bigg)
+R_N(t,x),
\end{aligned}
\]
where the remainder satisfies
\begin{equation}\label{eq:krawtchouk-remainder}
|R_N(t,x)|
\le
C\big((N-h_N(x))|(q-1)z|^3+h_N(x)|z|^3\big)
=
O_{M,T}(N^{-1/2}),
\end{equation}
uniformly for $|x|\le M$ and $|t|=T$.

We now isolate the linear and quadratic contributions. The linear part is
\[
L_N(t,x)
:=
\big((N-h_N(x))(q-1)-h_N(x)\big)z
=
\big((q-1)N-qh_N(x)\big)z.
\]
Using
\[
h_N(x)=Np+\sigma\sqrt N\,x+\varepsilon_N(x),
\qquad
p=\frac{q-1}{q},
\]
we obtain
\[
(q-1)N-qh_N(x)
=
-q\sigma\sqrt N\,x-q\varepsilon_N(x).
\]
Since
\[
\sigma^2=p(1-p)=\frac{q-1}{q^2},
\qquad
q\sigma=\sqrt{q-1},
\]
and
\[
z=-\frac{t}{\sqrt N\,\sqrt{q-1}},
\]
it follows that
\[
L_N(t,x)
=
xt+\frac{q\varepsilon_N(x)}{\sqrt N\,\sqrt{q-1}}\,t.
\]
Therefore,
\[
L_N(t,x)=xt+O_{M,T}(N^{-1/2}),
\]
uniformly for $|x|\le M$ and $|t|=T$.

On the other hand, if we look at the quadratic part, which is
\[
Q_N(t,x)
:=
-\frac12\big((N-h_N(x))(q-1)^2+h_N(x)\big)z^2,
\]
since
$
z^2=\frac{t^2}{N(q-1)}$,
we may rewrite this as
\[
Q_N(t,x)
=
-\frac{t^2}{2}
\bigg(
\frac{(N-h_N(x))(q-1)}{N}
+
\frac{h_N(x)}{N(q-1)}
\bigg).
\]
Once again we use the fact that
\[
h_N(x)=Np+O_M(\sqrt N),
\qquad
N-h_N(x)=N(1-p)+O_M(\sqrt N),
\]
uniformly for $|x|\le M$, and hence
\[
\frac{(N-h_N(x))(q-1)}{N}
+
\frac{h_N(x)}{N(q-1)}
=
(1-p)(q-1)+\frac{p}{q-1}+O_M(N^{-1/2}).
\]
In view of $p=(q-1)/q$, we compute
\[
(1-p)(q-1)+\frac{p}{q-1}
=
\frac{q-1}{q}+\frac1q
=
1.
\]
Therefore,
\[
Q_N(t,x)
=
-\frac{t^2}{2}\bigl(1+O_{M,T}(N^{-1/2})\bigr),
\]
uniformly for $|x|\le M$ and $|t|=T$. Combining the linear term, the quadratic
term, and the remainder estimate from \eqref{eq:krawtchouk-remainder}, we
conclude that
\[
\Phi_N(t,x)
=
xt-\frac{t^2}{2}+O_{M,T}(N^{-1/2}),
\]
uniformly for $|x|\le M$ and $|t|=T$.

Exponentiating, we obtain
\[
(1+(q-1)z)^{N-h_N(x)}(1-z)^{h_N(x)}
=
e^{\Phi_N(t,x)}
=
e^{xt-t^2/2}\bigl(1+r_N(t,x)\bigr),
\]
where
\[
\sup_{\substack{|x|\le M\\ |t|=T}} |r_N(t,x)|
=
O_{M,T}(N^{-1/2}).
\]
Using the generating function identity with $h=h_N(x)$, and recalling that
\[
z=-\frac{t}{\sqrt N\,\sqrt{q-1}},
\]
we may write
\[
\sum_{m=0}^N K_m^{(q)}(h_N(x);N)\,z^m
=
\sum_{m=0}^N
\frac{K_m^{(q)}(h_N(x);N)}
     {(\sqrt{q-1})^m N^{m/2}}\,(-t)^m
=:F_N(t,x).
\]
Hence
\begin{equation}\label{eq:krawtchouk-FN}
F_N(t,x)
=
e^{xt-t^2/2}\bigl(1+r_N(t,x)\bigr),
\qquad |x|\le M,\quad |t|=T.
\end{equation}

Fix $|x|\le M$. Since
\[
F_N(t,x)
=
\sum_{m=0}^N
\frac{K_m^{(q)}(h_N(x);N)}
     {(\sqrt{q-1})^m N^{m/2}}\,(-t)^m,
\]
Cauchy's integral formula on the circle $|t|=T$ gives
\[
\frac{(-1)^d}{(\sqrt{q-1})^d N^{d/2}}\,
K_d^{(q)}(h_N(x);N)
=
\frac{1}{2\pi i}
\oint_{|t|=T}\frac{F_N(t,x)}{t^{d+1}}\,dt.
\]
Using \eqref{eq:krawtchouk-FN}, we split the right-hand side as
\[
\frac{1}{2\pi i}\oint_{|t|=T}
\frac{e^{xt-t^2/2}}{t^{d+1}}\,dt
+
\frac{1}{2\pi i}\oint_{|t|=T}
\frac{e^{xt-t^2/2}r_N(t,x)}{t^{d+1}}\,dt.
\]
The generating function of the probabilists' Hermite polynomials,
\[
e^{xt-t^2/2}
=
\sum_{m=0}^\infty \frac{\mathrm{He}_m(x)}{m!}\,t^m,
\]
implies
\[
\frac{1}{2\pi i}\oint_{|t|=T}
\frac{e^{xt-t^2/2}}{t^{d+1}}\,dt
=
\frac{\mathrm{He}_d(x)}{d!}.
\]
For the error term, since $|x|\le M$ and $|t|=T$,
\[
\sup_{\substack{|x|\le M\\ |t|=T}}
|e^{xt-t^2/2}|
\le C_{M,T},
\]
and therefore
\[
\left|
\frac{1}{2\pi i}\oint_{|t|=T}
\frac{e^{xt-t^2/2}r_N(t,x)}{t^{d+1}}\,dt
\right|
\le
C_{M,T}\sup_{\substack{|x|\le M\\ |t|=T}}|r_N(t,x)|
=
O_{M,T}(N^{-1/2}).
\]
Consequently,
\[
\frac{(-1)^d}{(\sqrt{q-1})^d N^{d/2}}\,
K_d^{(q)}(h_N(x);N)
=
\frac{\mathrm{He}_d(x)}{d!}+O_{M,T}(N^{-1/2}),
\]
uniformly for $|x|\le M$.

For fixed $d$, we have
\[
\binom{N}{d}
=
\frac{N^d}{d!}\bigl(1+O_d(N^{-1})\bigr),
\]
and therefore
\[
\sqrt{(q-1)^d\binom{N}{d}}
=
\frac{(\sqrt{q-1})^d N^{d/2}}{\sqrt{d!}}
\bigl(1+O_d(N^{-1})\bigr).
\]
Combining this with the estimate above, we get
\[
\frac{K_d^{(q)}(h_N(x);N)}
     {\sqrt{(q-1)^d\binom{N}{d}}}
=
\frac{(-1)^d}{\sqrt{d!}}\,\mathrm{He}_d(x)
+
O_{M,T,d}(N^{-1/2}),
\]
uniformly for $|x|\le M$. Since $M>0$ was arbitrary, this proves uniform
convergence on compact subsets of $\mathbb R$.
\end{proof}

We now turn to the proof of Theorem~\ref{thm:probabilistic-limit}, explaining
how the probabilistic convergence follows from the deterministic asymptotic
result in Proposition~\ref{prop:krawtchouk-hermite-deterministic}. We shall
use the following standard consequence of Slutsky's theorem: if
$X_N\overset{D}{\longrightarrow}X$ and
$Y_N\overset{\mathbb P}{\longrightarrow}0$, then
$X_N+Y_N\overset{D}{\longrightarrow}X$.

\begin{proof}[Proof of Theorem~\ref{thm:probabilistic-limit}]
Set
\[
Z_N:=\frac{W_N-Np}{\sigma\sqrt N},
\qquad
\sigma^2=p(1-p).
\]
By the central limit theorem,
\[
Z_N\overset{D}{\longrightarrow}Z\sim\mathcal N(0,1).
\]
In particular, the family $\{Z_N\}_N$ is tight.

Define
\[
f_N(x)
:=
\frac{K_d^{(q)}\big(\lfloor Np+\sigma\sqrt N\,x\rfloor;N\big)}
     {\sqrt{(q-1)^d\binom{N}{d}}},
\qquad
f(x):=\frac{(-1)^d}{\sqrt{d!}}\,\mathrm{He}_d(x).
\]
By Proposition~\ref{prop:krawtchouk-hermite-deterministic}, $f_N\to f$
uniformly on compact subsets of $\mathbb R$, and $f$ is continuous. Moreover,
since
\[
Np+\sigma\sqrt N\,Z_N=W_N\in\mathbb Z,
\]
we have
\[
\lfloor Np+\sigma\sqrt N\,Z_N\rfloor=W_N.
\]
Therefore
\begin{equation}\label{eq:fnZn}
f_N(Z_N)
=
\frac{K_d^{(q)}(W_N;N)}
     {\sqrt{(q-1)^d\binom{N}{d}}}.
\end{equation}

We claim that
\[
f_N(Z_N)\overset{D}{\longrightarrow} f(Z).
\]
Indeed, since $\{Z_N\}_N$ is tight, for every $\varepsilon>0$ there exists
$M<\infty$ such that
\[
\sup_N\mathbb P(|Z_N|>M)\le\varepsilon.
\]
By the locally uniform convergence $f_N\to f$, there exists $N_0$ such that,
for all $N\ge N_0$,
\[
\sup_{|x|\le M}|f_N(x)-f(x)|\le\varepsilon.
\]
Hence, for $N\ge N_0$,
\[
\mathbb P\big(|f_N(Z_N)-f(Z_N)|>\varepsilon\big)
\le
\mathbb P(|Z_N|>M)
\le
\varepsilon.
\]
Thus
\[
f_N(Z_N)-f(Z_N)\overset{\mathbb P}{\longrightarrow}0.
\]
On the other hand, since $f$ is continuous and
$Z_N\overset{D}{\longrightarrow}Z$, the continuous mapping theorem gives
\[
f(Z_N)\overset{D}{\longrightarrow}f(Z).
\]
Slutsky's theorem therefore yields
\[
f_N(Z_N)\overset{D}{\longrightarrow}f(Z).
\]
Combining this with \eqref{eq:fnZn}, we get
\[
\frac{K_d^{(q)}(W_N;N)}
     {\sqrt{(q-1)^d\binom{N}{d}}}
\overset{D}{\longrightarrow}
\frac{(-1)^d}{\sqrt{d!}}\,\mathrm{He}_d(Z).
\]
Finally, since $Z$ is symmetric and
\[
\mathrm{He}_d(-x)=(-1)^d\mathrm{He}_d(x),
\]
the random variables $(-1)^d\mathrm{He}_d(Z)$ and $\mathrm{He}_d(Z)$ have the
same distribution. This gives the stated form of the limit.
\end{proof}

\subsection{Proof of Theorem~\ref{thm: main spherical}}

Fix $q\ge2$ and $d\ge0$. Recall from \eqref{hamming-int} that the
projection constant of the level--$d$ spherical space is given by
\[
\boldsymbol{\lambda}\big(\mathcal B_d(C_q^N)\big)
=
\|K_{\mathcal B_d(C_q^N)}\|_{L_1(C_q^N)}
=
\mathbb E\big|K_{\mathcal B_d(C_q^N)}(X)\big|,
\]
where, for each $N\in\mathbb N$, the random vector
$X=(X_1,\ldots,X_N)$ is uniformly distributed on $C_q^N$. Setting
$W_N:=w(X)$, we have
\[
W_N\sim\mathrm{Bin}(N,p),
\qquad
p=\frac{q-1}{q}.
\]
Since
\[
K_{\mathcal B_d(C_q^N)}(x)=K_d^{(q)}(w(x);N),
\]
we can rewrite the projection constant as
\[
\boldsymbol{\lambda}\big(\mathcal B_d(C_q^N)\big)
=
\mathbb E\big|K_d^{(q)}(W_N;N)\big|.
\]

For $N\ge d$, define the normalized random variables
\[
Y_N
:=
\frac{K_d^{(q)}(W_N;N)}
     {\sqrt{(q-1)^d\binom{N}{d}}}.
\]
Since
\[
\dim\mathcal B_d(C_q^N)=(q-1)^d\binom{N}{d},
\]
we obtain
\[
\frac{\boldsymbol{\lambda}\big(\mathcal B_d(C_q^N)\big)}
     {\sqrt{\dim\mathcal B_d(C_q^N)}}
=
\mathbb E|Y_N|.
\]

By the orthogonality relation for Krawtchouk polynomials with respect to the
binomial weight, see \eqref{orthogonality},
\[
\mathbb E\big|K_d^{(q)}(W_N;N)\big|^2
=
(q-1)^d\binom{N}{d}.
\]
Therefore
\[
\mathbb E|Y_N|^2=1,
\qquad N\ge d.
\]
In particular, the family $\{Y_N\}_{N\ge d}$ is uniformly integrable. Indeed,
for every $R>0$, by Cauchy--Schwarz and Markov's inequality,
\[
\sup_{N\ge d}
\mathbb E\big(|Y_N|\mathbf 1_{\{|Y_N|>R\}}\big)
\le
\sup_{N\ge d}\big(\mathbb E|Y_N|^2\big)^{1/2}
\sup_{N\ge d}\mathbb P(|Y_N|>R)^{1/2}
\le
\frac{1}{R}.
\]

On the other hand, Theorem~\ref{thm:probabilistic-limit} gives
\[
Y_N\overset{D}{\longrightarrow}
Y
:=
\frac{\mathrm{He}_d(Z)}{\sqrt{d!}},
\qquad N\to\infty,
\]
where $Z\sim\mathcal N(0,1)$.

We now show that $\mathbb E|Y_N|\to\mathbb E|Y|$. Let $\varepsilon>0$ be
arbitrary. By uniform integrability of $\{Y_N\}_{N\ge d}$ and integrability of
$Y$, choose $R>0$ such that
\[
\sup_{N\ge d}
\mathbb E\big(|Y_N|\mathbf 1_{\{|Y_N|>R\}}\big)<\varepsilon
\quad\text{and}\quad
\mathbb E\big(|Y|\mathbf 1_{\{|Y|>R\}}\big)<\varepsilon.
\]
For this fixed $R$, consider the truncation
\[
g_R(t):=\min\{|t|,R\}.
\]
Since $g_R$ is bounded and continuous, convergence in distribution implies
\[
\mathbb E\,g_R(Y_N)\longrightarrow \mathbb E\,g_R(Y).
\]
Moreover,
\[
0\le
\mathbb E|Y_N|-\mathbb E\,g_R(Y_N)
\le
\mathbb E\big(|Y_N|\mathbf 1_{\{|Y_N|>R\}}\big),
\]
and similarly
\[
0\le
\mathbb E|Y|-\mathbb E\,g_R(Y)
=
\mathbb E\big(|Y|\mathbf 1_{\{|Y|>R\}}\big).
\]
Combining these estimates, we obtain
\[
\limsup_{N\to\infty}
\big|\mathbb E|Y_N|-\mathbb E|Y|\big|
\le
2\varepsilon.
\]
Since $\varepsilon>0$ is arbitrary, this proves that
\[
\mathbb E|Y_N|\longrightarrow \mathbb E|Y|
=
\mathbb E\bigg|\frac{\mathrm{He}_d(Z)}{\sqrt{d!}}\bigg|.
\]
Therefore,
\[
\lim_{N\to\infty}
\frac{\boldsymbol{\lambda}\big(\mathcal B_d(C_q^N)\big)}
     {\sqrt{\dim\mathcal B_d(C_q^N)}}
=
\frac{\mathbb E|\mathrm{He}_d(Z)|}{\sqrt{d!}},
\]
which completes the proof of Theorem~\ref{thm: main spherical}.

\subsection{The top-layer principle for spherical spaces}

Finally, we study the projection constant of the space
$\mathcal B_{\le d}(C_q^N)$. The guiding principle is that, after
normalization by the square root of the dimension, the contribution of the
lower levels or degrees is asymptotically negligible. Thus the limiting
behaviour is determined by the top level or top degree.

\begin{proposition}\label{prop:B-le-d}
Fix $q\ge2$ and $d\ge1$. Then
\[
\lim_{N\to\infty}
\frac{\boldsymbol{\lambda}\big(\mathcal B_{\le d}(C_q^N)\big)}
     {\sqrt{\dim \mathcal B_{\le d}(C_q^N)}}
=
\frac{1}{\sqrt{d!}}\,\mathbb E\big|\mathrm{He}_d(Z)\big|,
\qquad Z\sim\mathcal N(0,1).
\]
\end{proposition}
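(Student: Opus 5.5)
By \eqref{hamming-int} applied to $\Lambda=\Lambda_{\le d,q,N}$, the projection constant equals the $L_1$-norm of the kernel. The kernel decomposes by levels as
\[
K_{\mathcal B_{\le d}(C_q^N)}(x)=\sum_{k=0}^d K_{\mathcal B_k(C_q^N)}(x)=\sum_{k=0}^d K_k^{(q)}\big(w(x);N\big),
\]
using Proposition~\ref{prop:spherical-kernel-krawtchouk} for each level, with the convention $K_0^{(q)}\equiv 1$ for $k=0$. Hence
\[
\boldsymbol{\lambda}\big(\mathcal B_{\le d}(C_q^N)\big)=\mathbb E\Big|\sum_{k=0}^d K_k^{(q)}(W_N;N)\Big|,\qquad W_N\sim\mathrm{Bin}(N,p),\quad p=\tfrac{q-1}{q}.
\]
Set $D_N:=\dim\mathcal B_{\le d}(C_q^N)=\sum_{k=0}^d (q-1)^k\binom Nk$ and $D_N^{(d)}:=(q-1)^d\binom Nd$. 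For fixed $d$ we have $D_N^{(k)}/D_N^{(d)}=O(N^{k-d})$, so $D_N/D_N^{(d)}\to1$.

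The plan is a top-layer reduction. By the triangle inequality in $L_1$,
\[
\bigg|\frac{\boldsymbol{\lambda}(\mathcal B_{\le d}(C_q^N))}{\sqrt{D_N}}-\frac{\mathbb E|K_d^{(q)}(W_N;N)|}{\sqrt{D_N}}\bigg|\le \frac{1}{\sqrt{D_N}}\sum_{k=0}^{d-1}\mathbb E\big|K_k^{(q)}(W_N;N)\big|.
\]
Each term on the right is controlled by Cauchy--Schwarz and the orthogonality relation \eqref{orthogonality}:
\[
\mathbb E|K_k^{(q)}(W_N;N)|\le \big((q-1)^k\tbinom Nk\big)^{1/2}=O\big(N^{k/2}\big).
\]
Since $\sqrt{D_N}\asymp N^{d/2}$, each error term is $O(N^{-1/2})$, and so the whole error tends to zero.

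For the main term, write
\[
\frac{\mathbb E|K_d^{(q)}(W_N;N)|}{\sqrt{D_N}}=\sqrt{\frac{D_N^{(d)}}{D_N}}\cdot\frac{\boldsymbol{\lambda}(\mathcal B_d(C_q^N))}{\sqrt{\dim\mathcal B_d(C_q^N)}}.
\]
The first factor tends to $1$, and the second tends to $\mathbb E|\mathrm{He}_d(Z)|/\sqrt{d!}$ by Theorem~\ref{thm: main spherical}.

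There is no serious obstacle. The only points to check are the additivity of the kernel across levels, which is immediate since $\Lambda_{\le d,q,N}$ is the disjoint union of the $\Lambda_{k,q,N}$, and the use of orthogonality to bound the lower layers. One could even bypass the triangle inequality by noting that the normalized lower layers have $L_2$-norm $O(N^{-1/2})$, so they vanish in $L_1$ as well.
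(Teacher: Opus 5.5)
Your proof is correct, but it takes a somewhat different route from the paper's proof of this proposition.

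The paper argues distributionally:
\begin{itemize}
\item it writes the normalized kernel $Y_N$ of $\mathcal B_{\le d}(C_q^N)$ as $Y_{N,d}+o_{\mathbb P}(1)$, using orthogonality to make the lower levels small in probability;
\item it then invokes Theorem~\ref{thm:probabilistic-limit} and Slutsky's theorem to get $Y_N\to \mathrm{He}_d(Z)/\sqrt{d!}$ in distribution;
\item finally it uses $\mathbb E|Y_N|^2=1$ and uniform integrability to pass to first absolute moments.
\end{itemize}

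You instead work directly in $L_1$. The triangle inequality together with the Cauchy--Schwarz/orthogonality bound removes the lower levels with an explicit $O(N^{-1/2})$ rate. The top level is then handled by quoting Theorem~\ref{thm: main spherical} as a black box, after the harmless renormalization $\sqrt{D_N^{(d)}/D_N}\to1$. No further limit theorems are needed.

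Your argument is shorter and gives a quantitative bound on the lower-layer error. It is in fact exactly the argument the paper itself uses later for Proposition~\ref{prop:P-T-le-d}. The paper's version repeats the uniform-integrability step, but in exchange yields the slightly stronger by-product that the normalized kernel of $\mathcal B_{\le d}(C_q^N)$ itself converges in distribution to $\mathrm{He}_d(Z)/\sqrt{d!}$.
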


\begin{proof}
Let $X=(X_1,\ldots,X_N)$ be uniformly distributed on $C_q^N$. By
\eqref{hamming-int}, applied to the index set
\[
\mathcal S_{\le d}
:=
\big\{\bm m\in\{0,\ldots,q-1\}^N\colon
|\operatorname{supp}(\bm m)|\le d
\big\},
\]
the projection constant admits the integral formula
\[
\boldsymbol{\lambda}\big(\mathcal B_{\le d}(C_q^N)\big)
=
\mathbb E\big|K_{\mathcal B_{\le d}(C_q^N)}(X)\big|,
\]
where
\[
K_{\mathcal B_{\le d}(C_q^N)}(x)
:=
\sum_{\bm m\in\mathcal S_{\le d}}\chi_{\bm m}(x).
\]
Decomposing according to the support size, we write
\[
K_{\mathcal B_{\le d}(C_q^N)}(x)
=
\sum_{k=0}^d K_{\mathcal B_k(C_q^N)}(x),
\]
where
\[
K_{\mathcal B_k(C_q^N)}(x)
:=
\sum_{\substack{\bm m\in\{0,\ldots,q-1\}^N\\
|\operatorname{supp}(\bm m)|=k}}
\chi_{\bm m}(x).
\]
Since
\[
\dim \mathcal B_k(C_q^N)
=
(q-1)^k\binom{N}{k},
\]
we have
\[
\dim \mathcal B_{\le d}(C_q^N)
=
\sum_{k=0}^d (q-1)^k\binom{N}{k}
\sim
(q-1)^d\frac{N^d}{d!},
\qquad N\to\infty.
\]
In particular,
\[
\frac{\dim \mathcal B_d(C_q^N)}
     {\dim \mathcal B_{\le d}(C_q^N)}
\longrightarrow 1.
\]

Define the normalized random variables
\[
Y_N
:=
\frac{K_{\mathcal B_{\le d}(C_q^N)}(X)}
     {\sqrt{\dim \mathcal B_{\le d}(C_q^N)}},
\qquad
Y_{N,d}
:=
\frac{K_{\mathcal B_d(C_q^N)}(X)}
     {\sqrt{\dim \mathcal B_d(C_q^N)}}.
\]
Using the dimension asymptotics above, we may write
\[
Y_N
=
Y_{N,d}
\sqrt{\frac{\dim \mathcal B_d(C_q^N)}
           {\dim \mathcal B_{\le d}(C_q^N)}}
+
\sum_{k=0}^{d-1}
\frac{K_{\mathcal B_k(C_q^N)}(X)}
     {\sqrt{\dim \mathcal B_{\le d}(C_q^N)}}.
\]
For $k<d$, orthogonality gives
\[
\mathbb E\big|K_{\mathcal B_k(C_q^N)}(X)\big|^2
=
\dim \mathcal B_k(C_q^N)
=
O_{d,q}(N^k).
\]
Therefore
\[
\frac{K_{\mathcal B_k(C_q^N)}(X)}
     {\sqrt{\dim \mathcal B_{\le d}(C_q^N)}}
=
O_{\mathbb P}\big(N^{(k-d)/2}\big)
\longrightarrow 0
\]
in probability. Since also
\[
\frac{\dim \mathcal B_d(C_q^N)}
     {\dim \mathcal B_{\le d}(C_q^N)}
\longrightarrow 1,
\]
we conclude that
\[
Y_N=Y_{N,d}+o_{\mathbb P}(1).
\]

By Theorem~\ref{thm:probabilistic-limit},
\[
Y_{N,d}
\overset{D}{\longrightarrow}
\frac{\mathrm{He}_d(Z)}{\sqrt{d!}},
\qquad Z\sim\mathcal N(0,1).
\]
Moreover, by orthogonality,
\[
\mathbb E|Y_{N,d}|^2=1.
\]
Also,
\[
\mathbb E|Y_N|^2=1,
\]
because the characters appearing in $K_{\mathcal B_{\le d}(C_q^N)}$ are
orthonormal and the normalization is by
$\dim\mathcal B_{\le d}(C_q^N)$. Hence $\{Y_N\}_N$ is uniformly integrable.
Since $Y_N-Y_{N,d}\to0$ in probability, Slutsky's theorem gives
\[
Y_N
\overset{D}{\longrightarrow}
\frac{\mathrm{He}_d(Z)}{\sqrt{d!}}.
\]
Uniform integrability then yields
\[
\mathbb E|Y_N|
\longrightarrow
\mathbb E\bigg|\frac{\mathrm{He}_d(Z)}{\sqrt{d!}}\bigg|
=
\frac{1}{\sqrt{d!}}\mathbb E|\mathrm{He}_d(Z)|.
\]
Finally,
\[
\frac{\boldsymbol{\lambda}\big(\mathcal B_{\le d}(C_q^N)\big)}
     {\sqrt{\dim \mathcal B_{\le d}(C_q^N)}}
=
\mathbb E|Y_N|,
\]
and the claimed asymptotic formula follows.
\end{proof}

\section{The projection constant of total-degree polynomial spaces}

In contrast with the spherical setting, the asymptotic behaviour of the
homogeneous and tetrahedral spaces exhibits a dichotomy between the Boolean
and non-Boolean regimes. In the Boolean case, the limiting constants are
governed by Hermite polynomials, whereas for $q\ge3$ they are
determined by moments of a circular complex Gaussian.

\begin{theorem}\label{thm:asymptotic-projection-Pd}
For every $q\ge2$ and every integer $d\ge1$, both the homogeneous and the
tetrahedral spaces satisfy
\[
\lim_{N\to\infty}
\frac{\boldsymbol{\lambda}\big(\mathcal P_d(C_q^N)\big)}
     {\sqrt{\dim \mathcal P_d(C_q^N)}}
=
\lim_{N\to\infty}
\frac{\boldsymbol{\lambda}\big(\mathcal T_d(C_q^N)\big)}
     {\sqrt{\dim \mathcal T_d(C_q^N)}}
=
\begin{cases}
\displaystyle
\frac{1}{\sqrt{d!}}\,\mathbb E\big|\mathrm{He}_d(Z)\big|,
 & \text{if } q=2,\\[3ex]
\displaystyle
\frac{\Gamma\big(1+\frac{d}{2}\big)}{\sqrt{d!}},
 & \text{if } q\ge3,
\end{cases}
\]
where $Z$ is a standard real Gaussian random variable.
\end{theorem}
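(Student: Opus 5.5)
By the integral formula \eqref{hamming-int}, the projection constant equals $\mathbb E|K(X)|$, where $X$ is uniform on $C_q^N$ and $K=\sum_{\bm m\in\Lambda}\chi_{\bm m}$ is the kernel. The characters are orthonormal, so $\mathbb E|K|^2=\dim$. Hence the normalized variables $Y_N:=K(X)/\sqrt{\dim}$ satisfy $\mathbb E|Y_N|^2=1$ and are uniformly integrable. As in the proof of Theorem~\ref{thm: main spherical}, it therefore suffices to identify the limit in distribution of $Y_N$ and then pass to $\mathbb E|Y_N|$ by truncation with $g_R(t)=\min\{|t|,R\}$.

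For $q=2$ we have $\mathcal P_d=\mathcal T_d=\mathcal B_d$, so the claim is Theorem~\ref{thm: main spherical}. For $q\ge3$, treat $\mathcal T_d$ first. Its kernel is $K_{\mathcal T_d}(x)=e_d(x_1,\dots,x_N)$, the $d$-th elementary symmetric polynomial in the coordinates, with $\dim=\binom Nd$. Set $S_N:=N^{-1/2}\sum_k X_k$. Newton's identities write $e_d$ as a polynomial in the power sums $p_j=\sum_k X_k^j$, $1\le j\le d$. Since $q\ge3$, $\mathbb E X_k=\mathbb E X_k^2=0$, and the components $(\operatorname{Re}X_k,\operatorname{Im}X_k)$ are uncorrelated with variance $1/2$ each. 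By the multivariate CLT, $S_N\to G$, a standard circular complex Gaussian with $\mathbb E|G|^2=1$ and $\mathbb E G^2=0$. Every power sum is at most $N$ in modulus, so $p_j/N^{j/2}\to0$ deterministically for $j\ge3$. For $j=2$, $p_2=\sum X_k^2$ has mean zero when $q\ge3$ (with $X_k^2$ uniform on $C_{q/2}$ or $C_q$, nontrivial for $q\ge 3$), so $p_2=O_{\mathbb P}(\sqrt N)$ and $p_2/N\to0$ in probability. In the Newton expansion of $e_d$, every monomial other than $p_1^d/d!$ therefore contributes $o_{\mathbb P}(N^{d/2})$. Hence
\[
\frac{e_d(X)}{\sqrt{\binom Nd}}=\frac{S_N^d}{d!}\sqrt{d!}\,(1+o(1))+o_{\mathbb P}(1)\overset{D}{\longrightarrow}\frac{G^d}{\sqrt{d!}}.
\]
Uniform integrability then gives the limit $\mathbb E|G|^d/\sqrt{d!}$. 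Since $|G|^2$ is exponential with mean $1$, $\mathbb E|G|^d=\Gamma(1+d/2)$, which is the claimed value.

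For $\mathcal P_d$, the kernel is the complete-type sum $K_{\mathcal P_d}(x)=\sum_{|\bm m|=d,\ m_k\le q-1}x^{\bm m}$, the coefficient of $t^d$ in $\prod_k(1+x_kt+\dots+x_k^{q-1}t^{q-1})$. I would split it by support size as in \eqref{eq:homogeneous-support-decomposition}: the support-$d$ piece is exactly $K_{\mathcal T_d}$, and each piece with support $k<d$ has $L^2$ norm equal to the square root of its dimension, which is $O(N^{k/2})$. Because $\dim\mathcal P_d/\dim\mathcal T_d\to1$ (both are $\sim N^d/d!$), these lower pieces are $o_{\mathbb P}(1)$ after normalization, a top-layer argument as in Proposition~\ref{prop:B-le-d}. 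Slutsky's theorem then reduces $\mathcal P_d$ to the tetrahedral limit.

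I expect the main obstacle to be making the Newton-identity reduction rigorous: bounding each non-leading monomial $\prod_j p_j^{a_j}$ with $\sum_j ja_j=d$ by $o_{\mathbb P}(N^{d/2})$. Only the $j=1$ and $j=2$ factors need probabilistic control; $p_1=O_{\mathbb P}(\sqrt N)$ and $p_2=O_{\mathbb P}(\sqrt N)$ both follow from second moments, and every $j\ge3$ factor is deterministically $O(N)=o(N^{j/2})$. The other point to check is the circular limit itself, which needs $\mathbb E X_k^2=0$ and is exactly where $q\ge3$ is used.
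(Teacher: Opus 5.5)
Your proof is correct, but it reaches the result by a different route from the paper.

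\textbf{The paper's route.} It treats the homogeneous kernel first. It uses the generating function $(1-z^q)^N\prod_k(1-x_kz)^{-1}$, which relies on $x_k^q=1$ to collapse the capped geometric sums. It then substitutes $z=t/\sqrt N$, expands the logarithm as $\sum_r p_r z^r/r$, controls the errors uniformly for $|t|\le T$, and extracts the coefficient of $t^d$ by Cauchy's integral formula. This gives
\[
K_{\mathcal P_d}(X)/N^{d/2}=(S_N/\sqrt N)^d/d!+o_{\mathbb P}(1).
\]
The tetrahedral case is then deduced from Lemma~\ref{lem:Pd-Td-projection-comparison}. That lemma bounds $\mathbb E|K_{\mathcal P_d}(X)-K_{\mathcal T_d}(X)|$ by $O(N^{(d-1)/2})$, using the same count of support-$<d$ characters that you use.

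\textbf{Your route.} You reverse the order. You handle $e_d=K_{\mathcal T_d}$ through Newton's identities and then pass to $\mathcal P_d$ by that same support-decomposition comparison, phrased via Slutsky instead of an $L^1$ triangle inequality. The two core steps are close cousins. The paper's expansion $\sum_r p_r z^r/r$ is the generating-function form of Newton's identities for complete symmetric functions; you use the finite algebraic version for elementary ones.

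\textbf{What each buys.} In your version $e_d$ is a fixed polynomial in $p_1,\dots,p_d$ with finitely many monomials, so the $O_{\mathbb P}$ bookkeeping is termwise. You need no contour integrals, no uniform-in-$t$ control of $o_{\mathbb P}$ errors before exponentiating, and no treatment of the prefactor $(1-z^q)^N$. Your version also isolates exactly where $q\ge3$ enters: only through $p_2=O_{\mathbb P}(\sqrt N)$ (alongside $\mathbb E X_k^2=0$ for the circular limit). The paper's route, in exchange, handles the capped homogeneous kernel directly. It also parallels the generating-function and Cauchy-formula analysis used for the Krawtchouk--Hermite limit in the spherical case.
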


We begin with the homogeneous case
and derive a convenient generating function for the corresponding projection
kernel.

For $d\ge0$, define
\[
K_{\mathcal P_d(C_q^N)}(x)
:=
\sum_{\substack{\bm\alpha\in\{0,\ldots,q-1\}^N\\ |\bm\alpha|=d}}
\chi_{\bm\alpha}(x),
\qquad x\in C_q^N.
\]
By \eqref{hamming-int},
\[
\boldsymbol{\lambda}\big(\mathcal P_d(C_q^N)\big)
=
\mathbb E\big|K_{\mathcal P_d(C_q^N)}(X)\big|,
\]
where $X$ is uniformly distributed on $C_q^N$.

\begin{proposition}\label{prop:dim-poly-alternating}
Fix $q\ge2$ and $d\ge1$. Then
\[
\dim \mathcal P_d(C_q^N)
=
\#\big\{\bm\alpha\in\{0,1,\ldots,q-1\}^N\colon |\bm\alpha|=d\big\}
=
\sum_{j=0}^{\lfloor d/q\rfloor}
(-1)^j\binom{N}{j}\binom{N+d-qj-1}{N-1}.
\]
\end{proposition}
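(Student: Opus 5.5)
The first equality is immediate. The characters $\chi_{\bm\alpha}$ with $\bm\alpha\in\{0,\ldots,q-1\}^N$ are distinct and orthonormal in $L^2(C_q^N)$, so they are linearly independent. Hence $\dim\mathcal P_d(C_q^N)$ equals the number of $\bm\alpha\in\{0,\ldots,q-1\}^N$ with $|\bm\alpha|=d$. It remains to count these bounded compositions. I would do this with generating functions, in the same spirit as Proposition~\ref{prop:generating-function}.

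The number in question is the coefficient of $z^d$ in
\[
\big(1+z+\cdots+z^{q-1}\big)^N=\frac{(1-z^q)^N}{(1-z)^N}.
\]
Expand the numerator by the binomial theorem as $\sum_{j=0}^N(-1)^j\binom{N}{j}z^{qj}$. Expand the denominator by the negative binomial series as $\sum_{k\ge0}\binom{N+k-1}{N-1}z^k$. Extracting the coefficient of $z^d$ then means pairing $z^{qj}$ with $z^{k}$ where $k=d-qj\ge0$, so that $j\le\lfloor d/q\rfloor$. This gives
\[
\sum_{j=0}^{\lfloor d/q\rfloor}(-1)^j\binom{N}{j}\binom{N+d-qj-1}{N-1},
\]
which is the claimed formula.

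If a combinatorial proof is preferred, inclusion--exclusion gives the same result. Count the nonnegative integer solutions of $\alpha_1+\cdots+\alpha_N=d$ by stars and bars. Let $E_i$ be the event $\alpha_i\ge q$. For each set $J$ with $|J|=j$, forcing $\alpha_i\ge q$ for all $i\in J$ and substituting $\alpha_i'=\alpha_i-q$ shows there are $\binom{N+d-qj-1}{N-1}$ solutions. This count is zero when $qj>d$.

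There is no real obstacle. The only care needed concerns edge conventions. The identity $\binom{N+k-1}{N-1}=\#\{\text{compositions of }k\text{ into }N\text{ nonnegative parts}\}$ requires $N\ge1$, and this is the case here. Summing only up to $\lfloor d/q\rfloor$ is legitimate because the terms with $qj>d$ contribute nothing. The terms with $j>N$ vanish automatically because $\binom{N}{j}=0$.
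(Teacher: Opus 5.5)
Your proof is correct. Your main argument takes a slightly different route from the paper: you extract the coefficient of $z^d$ from $(1-z^q)^N(1-z)^{-N}$. The paper argues purely combinatorially, by inclusion--exclusion over the events $\alpha_i\ge q$ combined with stars and bars, which is exactly your secondary sketch.

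The two arguments are really the same computation in different clothing. Expanding $(1-z^q)^N$ by the binomial theorem is the algebraic form of summing over subsets $J$ with sign $(-1)^{|J|}$. The negative binomial coefficient $\binom{N+k-1}{N-1}$ is the stars-and-bars count.

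Each route has its advantage:
\begin{itemize}
\item \emph{Generating function.} It is shorter, and because it is a polynomial identity it handles all edge cases automatically: terms with $qj>d$ or $j>N$, and values of $N$ for which no admissible $\bm\alpha$ exists. It also fits the paper neatly. Your generating function is just Proposition~\ref{prop:GF-slice} evaluated at $x=(1,\ldots,1)$, since $K_{\mathcal P_d(C_q^N)}(1,\ldots,1)=\dim\mathcal P_d(C_q^N)$.
\item \emph{Inclusion--exclusion.} It is more elementary and makes the combinatorial meaning of each alternating term explicit.
\end{itemize}

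For the first equality, your appeal to the linear independence of the distinct orthonormal characters is a fine justification of what the paper simply calls ``by definition.''
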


\begin{proof}
By definition,
\[
\dim \mathcal P_d(C_q^N)
=
\#\big\{\bm\alpha=(\alpha_1,\ldots,\alpha_N)\in\{0,1,\ldots,q-1\}^N
\colon \alpha_1+\cdots+\alpha_N=d\big\}.
\]
Let $A$ denote the set of all $\bm\alpha\in\mathbb N_0^N$ such that
$\alpha_1+\cdots+\alpha_N=d$, with no upper bound on the coordinates. For
each $i\in\{1,\ldots,N\}$, let
\[
A_i:=\{\bm\alpha\in A\colon \alpha_i\ge q\}.
\]
The desired set is precisely $A\setminus\bigcup_{i=1}^N A_i$. Hence, by
inclusion--exclusion,
\begin{equation}\label{eq:IE}
\#\bigg(A\setminus\bigcup_{i=1}^N A_i\bigg)
=
\sum_{J\subset\{1,\ldots,N\}}(-1)^{|J|}
\#\bigg(\bigcap_{i\in J}A_i\bigg).
\end{equation}
For $J=\emptyset$ then $\bigcap_{i\in J}A_i$ is by convention equal to $A$.
We now compute $\#(\cap_{i\in J}A_i)$ for a fixed subset $J$ with $|J|=j$.
The condition $\alpha_i\ge q$ for every $i\in J$ is equivalent, after the
change of variables
\[
\beta_i=
\begin{cases}
\alpha_i-q, & i\in J,\\
\alpha_i, & i\notin J,
\end{cases}
\]
to requiring $\beta_i\in\mathbb N_0$ for all $i$ and
\[
\sum_{i=1}^N\beta_i=d-qj.
\]
Therefore $\#(\cap_{i\in J}A_i)$ equals the number of weak compositions of
$d-qj$ into $N$ parts, namely
\[
\binom{N+d-qj-1}{N-1},
\]
with the convention that this number is $0$ whenever $d-qj<0$. Substituting
this into \eqref{eq:IE} and grouping subsets $J$ by their size $j$, we obtain
\[
\dim \mathcal P_d(C_q^N)
=
\sum_{j\ge0}
(-1)^j\binom{N}{j}\binom{N+d-qj-1}{N-1}.
\]
The sum truncates at $j\le\lfloor d/q\rfloor$, since otherwise $d-qj<0$.
This proves the formula.
\end{proof}

As an immediate consequence of Proposition~\ref{prop:dim-poly-alternating}, we
record the standard fixed-degree asymptotics.

\begin{corollary}\label{dimoo}
Fix $d\ge1$ and $q\ge2$. Then
\[
\dim \mathcal P_d(C_q^N)
=
\frac{N^d}{d!}\bigg(1+O\bigg(\frac1N\bigg)\bigg)
\sim
\frac{N^d}{d!},
\qquad N\to\infty.
\]
\end{corollary}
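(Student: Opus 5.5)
We will derive the asymptotics directly from the inclusion--exclusion formula of Proposition~\ref{prop:dim-poly-alternating}. The plan is to show that the $j=0$ term already has the claimed order, and that the terms with $j\ge1$ are of strictly smaller order in $N$.

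\emph{The $j=0$ term.} Since $d$ is fixed, this term is
\[
\binom{N+d-1}{N-1}=\binom{N+d-1}{d}=\frac{(N+d-1)(N+d-2)\cdots N}{d!}.
\]
The numerator is a monic polynomial in $N$ of degree $d$, whose coefficients depend only on $d$. Hence
\[
\binom{N+d-1}{d}=\frac{N^d}{d!}\bigl(1+O(1/N)\bigr).
\]

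\emph{The terms with $1\le j\le\lfloor d/q\rfloor$.} For each such $j$ we have $\binom{N}{j}\le N^j$. Writing $e:=d-qj\ge0$, we also have
\[
\binom{N+d-qj-1}{N-1}=\binom{N+e-1}{e}=O_d(N^{e}).
\]
So the $j$-th term is $O_d(N^{j+d-qj})=O_d(N^{d-(q-1)j})$. Since $q\ge2$ and $j\ge1$, the exponent is at most $d-1$. There are at most $\lfloor d/q\rfloor+1$ such terms, a number independent of $N$, so their total contribution is $O_{d,q}(N^{d-1})=\frac{N^d}{d!}\,O(1/N)$.

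Adding the two contributions gives $\dim\mathcal P_d(C_q^N)=\frac{N^d}{d!}(1+O(1/N))$. The relation $\sim N^d/d!$ follows immediately.

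As a sanity check, the same bounds can be obtained without inclusion--exclusion. The set being counted contains all $\bm\alpha\in\{0,1\}^N$ with $|\bm\alpha|=d$, of which there are $\binom{N}{d}\sim N^d/d!$. It is contained in the set of all weak compositions of $d$ into $N$ parts, of which there are $\binom{N+d-1}{d}\sim N^d/d!$. This squeeze gives the result directly.

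There is no real obstacle. The only point needing care is checking that the exponent $d-(q-1)j$ drops below $d$ for every $j\ge1$, and this is exactly where $q\ge2$ is used.
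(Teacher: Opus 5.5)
Your proof is correct and follows the paper's argument essentially verbatim: isolate the $j=0$ term $\binom{N+d-1}{d}=\frac{N^d}{d!}+O(N^{d-1})$ from Proposition~\ref{prop:dim-poly-alternating}, and bound each of the finitely many terms with $j\ge1$ by $O(N^{d-(q-1)j})=O(N^{d-1})$. Your closing squeeze $\binom{N}{d}\le\dim\mathcal P_d(C_q^N)\le\binom{N+d-1}{d}$ is also valid, and it yields the same conclusion without any appeal to inclusion--exclusion.
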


\begin{proof}
By Proposition~\ref{prop:dim-poly-alternating}, we may write
\[
\dim \mathcal P_d(C_q^N)
=
\binom{N+d-1}{N-1}
+
\sum_{j=1}^{\lfloor d/q\rfloor}
(-1)^j
\binom{N}{j}\binom{N+d-qj-1}{N-1}.
\]
The leading term satisfies
\[
\binom{N+d-1}{N-1}
=
\binom{N+d-1}{d}
=
\frac{N^d}{d!}+O(N^{d-1}).
\]
For each fixed $j\ge1$,
\[
\binom{N}{j}\binom{N+d-qj-1}{N-1}
=
\bigg(\frac{N^j}{j!}+O(N^{j-1})\bigg)
\bigg(\frac{N^{d-qj}}{(d-qj)!}+O(N^{d-qj-1})\bigg)
=
O\big(N^{d-(q-1)j}\big).
\]
Since $q\ge2$ and $j\ge1$, we have $d-(q-1)j\le d-1$, and the sum contains
only finitely many terms. Thus all correction terms contribute at most
$O(N^{d-1})$. Therefore,
\[
\dim \mathcal P_d(C_q^N)
=
\frac{N^d}{d!}+O(N^{d-1}),
\]
which yields the stated asymptotics.
\end{proof}

\begin{proposition}\label{prop:GF-slice}
For every $x=(x_1,\ldots,x_N)\in C_q^N$ and every $z\in\C$ with $|z|<1$,
\[
\sum_{d\ge0} K_{\mathcal P_d(C_q^N)}(x)\,z^d
=
\prod_{k=1}^N\bigg(\sum_{a=0}^{q-1}(x_k z)^a\bigg)
=
(1-z^q)^N\prod_{k=1}^N(1-x_k z)^{-1}.
\]
\end{proposition}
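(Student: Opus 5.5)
The plan is to expand the kernel character by character and factor coordinatewise, following the proof of Proposition~\ref{prop:spherical-kernel-krawtchouk}. For a fixed $x\in C_q^N$, the map $\bm\alpha\mapsto\chi_{\bm\alpha}(x)z^{|\bm\alpha|}=\prod_{k=1}^N (x_kz)^{\alpha_k}$ is a product of one-variable factors. Summing over all $\bm\alpha\in\{0,\ldots,q-1\}^N$ and grouping by the degree $d=|\bm\alpha|$ gives
\[
\sum_{d\ge0}K_{\mathcal P_d(C_q^N)}(x)z^d=\sum_{\bm\alpha\in\{0,\ldots,q-1\}^N}\prod_{k=1}^N(x_kz)^{\alpha_k}=\prod_{k=1}^N\bigg(\sum_{a=0}^{q-1}(x_kz)^a\bigg).
\]
Here the sum over $d$ is in fact finite, ranging over $0\le d\le (q-1)N$, because $K_{\mathcal P_d(C_q^N)}=0$ for larger $d$. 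So this first identity is a polynomial identity in $z$ and needs no convergence argument. The interchange of summation is justified because the product over $k$ of finite sums distributes over the finite product set $\{0,\ldots,q-1\}^N$.

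The second equality comes from the finite geometric series. Since $|x_k|=1$ and $|z|<1$, we have $x_kz\neq1$, so
\[
\sum_{a=0}^{q-1}(x_kz)^a=\frac{1-(x_kz)^q}{1-x_kz}=\frac{1-z^q}{1-x_kz},
\]
where we used $x_k^q=1$ because $x_k$ is a $q$-th root of unity. Taking the product over $k=1,\ldots,N$ yields $(1-z^q)^N\prod_{k}(1-x_kz)^{-1}$.

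There is no serious obstacle in this argument. The only point needing care is the hypothesis $|z|<1$: it is what guarantees $1-x_kz\neq0$ for every $k$, and it makes the right-hand side analytic, which is the form that will be convenient for the Cauchy-integral asymptotics to follow.
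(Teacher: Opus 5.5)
Your proof is correct and follows essentially the same route as the paper: expand the product of the one-variable sums to identify the coefficient of $z^d$ with $K_{\mathcal P_d(C_q^N)}(x)$, then apply the finite geometric sum formula using $(x_kz)^q=z^q$. Your explicit remark that $|z|<1$ guarantees $x_kz\neq1$ is a small point the paper leaves implicit.
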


\begin{proof}
By definition,
\[
K_{\mathcal P_d(C_q^N)}(x)
=
\sum_{\substack{\alpha\in\{0,\ldots,q-1\}^N\\ |\alpha|=d}}x^\alpha
=
\sum_{\substack{\alpha\in\{0,\ldots,q-1\}^N\\ |\alpha|=d}}
\prod_{k=1}^N x_k^{\alpha_k}.
\]
Consider the product
\[
\prod_{k=1}^N\bigg(\sum_{a=0}^{q-1}(x_k z)^a\bigg).
\]
Expanding it, each choice of exponents
$\alpha=(\alpha_1,\ldots,\alpha_N)\in\{0,\ldots,q-1\}^N$ contributes
\[
\prod_{k=1}^N (x_k z)^{\alpha_k}
=
x^\alpha z^{|\alpha|}.
\]
Hence the coefficient of $z^d$ in this product is precisely
\[
\sum_{\substack{\alpha\in\{0,\ldots,q-1\}^N\\ |\alpha|=d}}x^\alpha
=
K_{\mathcal P_d(C_q^N)}(x),
\]
which proves the first identity.

For the second identity, for each $k$ we use the finite geometric sum
\[
\sum_{a=0}^{q-1}(x_k z)^a
=
\frac{1-(x_k z)^q}{1-x_k z}.
\]
Since $x_k\in C_q$, we have $x_k^q=1$, and therefore $(x_k z)^q=z^q$.
Multiplying over $k=1,\ldots,N$ gives
\[
\prod_{k=1}^N\bigg(\sum_{a=0}^{q-1}(x_k z)^a\bigg)
=
\prod_{k=1}^N\frac{1-z^q}{1-x_k z}
=
(1-z^q)^N\prod_{k=1}^N(1-x_k z)^{-1}.
\]
This completes the proof.
\end{proof}

\subsection{CLT-scale coefficient reduction}

We begin with a simple observation concerning moments of roots of unity.

\begin{lemma}\label{lem:roots-moments}
Let $X$ be uniformly distributed on
$C_q=\{\omega^j\colon j=0,\ldots,q-1\}$, where
$\omega=e^{2\pi i/q}$ and $q\ge3$. Then
\[
\mathbb E[X]=0
\qquad\text{and}\qquad
\mathbb E[X^2]=0.
\]
\end{lemma}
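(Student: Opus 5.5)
The plan is to compute both moments directly as averages of geometric progressions of roots of unity. Since $X$ is uniform on $C_q$, for every integer $k$ we have
\[
\mathbb E[X^k]=\frac1q\sum_{j=0}^{q-1}\omega^{jk}.
\]
The sum on the right is a finite geometric series with ratio $\omega^k$. Whenever $\omega^k\neq1$, it equals $\frac{1-\omega^{kq}}{1-\omega^k}=0$, because $\omega^{kq}=(\omega^q)^k=1$.

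It therefore suffices to check that $\omega^k\ne1$ for $k=1$ and $k=2$. Because $\omega=e^{2\pi i/q}$ has order exactly $q$, the relation $\omega^k=1$ holds if and only if $q\mid k$. For $q\ge3$, neither $1$ nor $2$ is divisible by $q$, so $\omega\neq1$ and $\omega^2\ne1$. Hence $\mathbb E[X]=\mathbb E[X^2]=0$.

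The only point needing care is the role of the hypothesis $q\ge3$. For $q=2$ we have $\omega^2=1$, and then $\mathbb E[X^2]=1$. This is precisely the source of the Boolean/non-Boolean dichotomy in Theorem~\ref{thm:asymptotic-projection-Pd}, and I do not expect any step of the argument to be a real obstacle.
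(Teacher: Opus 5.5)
Your proof is correct and is essentially the paper's argument: both moments vanish as geometric series of roots of unity whose ratio is not $1$. The only difference is that you handle $\mathbb E[X^2]$ in one step via $\omega^2\neq1$ (since $q\nmid 2$), whereas the paper splits into $q$ odd (multiplication by $2$ permutes $\mathbb Z/q\mathbb Z$) and $q$ even (ratio $\omega^2\neq1$); your uniform treatment is slightly cleaner.
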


\begin{proof}
Since $\omega\neq1$ and $\omega^q=1$, the first moment is given by
\[
\mathbb E[X]
=
\frac1q\sum_{j=0}^{q-1}\omega^j
=
\frac{1-\omega^q}{q(1-\omega)}
=
0.
\]
For the second moment,
\[
\mathbb E[X^2]
=
\frac1q\sum_{j=0}^{q-1}\omega^{2j}.
\]
If $q$ is odd, multiplication by $2$ induces a permutation of
$\mathbb Z/q\mathbb Z$, and therefore
\[
\sum_{j=0}^{q-1}\omega^{2j}
=
\sum_{j=0}^{q-1}\omega^j
=
0.
\]
If $q$ is even, write $q=2r$ with $r\ge2$ and set
$v=\omega^2=e^{2\pi i/r}$. Then $v\neq1$ and $v^r=1$, so
\[
\sum_{j=0}^{q-1}\omega^{2j}
=
\sum_{j=0}^{q-1}v^j
=
0.
\]
In both cases, $\mathbb E[X^2]=0$.
\end{proof}

The restriction $q\ge3$ is essential here. When $q=2$, one has
$\omega=-1$, and hence $X^2\equiv1$, so $\mathbb E[X^2]=1$.

\medskip

Before continuing, we recall the probabilistic Landau notation $O_{\mathbb P}$
and $o_{\mathbb P}$. For a positive deterministic sequence $(a_N)$, we write
$X_N=O_{\mathbb P}(a_N)$ if, for every $\varepsilon>0$, there exists
$M<\infty$ such that
\[
\sup_{N\in\mathbb N}
\mathbb P\big(|X_N|>M a_N\big)
\le
\varepsilon.
\]
We write $X_N=o_{\mathbb P}(a_N)$ if
\[
\frac{X_N}{a_N}
\overset{\mathbb P}{\longrightarrow}
0,
\]
equivalently, if for every $\delta>0$,
\[
\lim_{N\to\infty}\mathbb P(|X_N|>\delta a_N)=0.
\]
We shall use the standard rules
\[
O_{\mathbb P}(a_N)\,O_{\mathbb P}(b_N)=O_{\mathbb P}(a_Nb_N),
\qquad
o_{\mathbb P}(a_N)\,O_{\mathbb P}(b_N)=o_{\mathbb P}(a_Nb_N),
\]
and the analogous rule for sums. When bounds are uniform in $t$ over a compact
set, the constants may depend on this compact set, but never on $N$.

Let $X=(X_1,\ldots,X_N)$ be uniformly distributed on $C_q^N$, so that the
coordinates $X_k$ are independent and uniformly distributed on $C_q$. Set
\[
S_N:=\sum_{k=1}^N X_k.
\]

\begin{proposition}\label{prop:CLTreduction}
Let $q\ge3$. Then, for each fixed degree $d\ge1$,
\[
\frac{K_{\mathcal P_d(C_q^N)}(X)}
     {\sqrt{\dim \mathcal P_d(C_q^N)}}
=
\frac{1}{\sqrt{d!}}
\bigg(\frac{S_N}{\sqrt N}\bigg)^{d}
+
o_{\mathbb P}(1),
\qquad N\to\infty.
\]
\end{proposition}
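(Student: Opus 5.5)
The approach is to read off the coefficient of $z^d$ in the generating function of Proposition~\ref{prop:GF-slice} and to express it through power sums. For $|z|<1$ we take logarithms:
\[
\log\Big(\sum_{d\ge0}K_{\mathcal P_d(C_q^N)}(x)z^d\Big)
=
N\log(1-z^q)+\sum_{k=1}^N\sum_{j\ge1}\frac{(x_kz)^j}{j}
=
N\log(1-z^q)+\sum_{j\ge1}\frac{p_j(x)}{j}\,z^j,
\]
where $p_j(x):=\sum_k x_k^j$. Since $x_k^q=1$, the power sums are periodic in $j$ with period $q$, and $p_q=N$. In fact the term $N\log(1-z^q)$ cancels exactly the contributions with $q\mid j$. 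Exponentiating, $K_{\mathcal P_d}(x)$ becomes a finite polynomial, with coefficients depending only on $d$ and $q$, in $p_1,\dots,p_{q-1}$ (together with $N$, though the cancellation removes it). More concretely, this is the usual cycle-index formula for complete homogeneous symmetric functions, truncated by the $q$-periodicity:
\[
K_{\mathcal P_d}(x)=\sum_{\lambda\vdash d,\ q\nmid\lambda_i}\frac{1}{z_\lambda}\prod_i p_{\lambda_i}(x),
\]
where $z_\lambda$ is the standard normalizing constant. It remains to check that the factor $(1-z^q)^N$ cancels exactly the parts divisible by $q$. I would verify this directly from $\log(1-z^q)=-\sum_m z^{qm}/m$ against $\sum_k (x_kz)^{qm}/(qm)=Nz^{qm}/(qm)$. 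Note that the second expression carries a factor $1/q$ while the first does not, so the cancellation is not automatic and must be rechecked. If it fails, $N$-dependent terms remain, and we have to estimate them in the same way as below: a part equal to $q$ contributes a factor of order $N$ with a fixed coefficient, so these terms need careful handling.

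Next we assess the size of each term under $X$ uniform. For $1\le j\le q-1$, the variable $X_k^j$ is a character of $C_q$ that is nontrivial when $q\nmid j$, so it has mean zero and modulus one. Hence $p_j(X)=O_{\mathbb P}(\sqrt N)$ by the second moment. A partition $\lambda\vdash d$ with $\ell$ parts thus contributes $O_{\mathbb P}(N^{\ell/2})$. Only $\lambda=(1^d)$ reaches $N^{d/2}$; every other partition has $\ell\le d-1$ and contributes $O_{\mathbb P}(N^{(d-1)/2})$. The $(1^d)$ term equals $p_1^d/d!=S_N^d/d!$. If parts equal to $q$ survive, they give $N\cdot O_{\mathbb P}(N^{(d-q-\dots)/2})$. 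A part $q\ge3$ costs degree $q$ but gains only $N^1$, which is less than $N^{q/2}$, so these terms are still $o_{\mathbb P}(N^{d/2})$. This is exactly where $q\ge3$ enters, together with Lemma~\ref{lem:roots-moments}: $p_2$ also has mean zero, since $\mathbb E[X^2]=0$. For $q=2$, by contrast, $p_2=N$ would contribute at the top order.

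Finally, we divide by $\sqrt{\dim\mathcal P_d(C_q^N)}=\sqrt{N^d/d!}\,(1+O(1/N))$, using Corollary~\ref{dimoo}:
\[
\frac{K_{\mathcal P_d}(X)}{\sqrt{\dim\mathcal P_d}}=\frac{\sqrt{d!}}{N^{d/2}}\Big(\frac{S_N^d}{d!}+o_{\mathbb P}(N^{d/2})\Big)=\frac1{\sqrt{d!}}\Big(\frac{S_N}{\sqrt N}\Big)^d+o_{\mathbb P}(1).
\]
Here we also use $S_N/\sqrt N=O_{\mathbb P}(1)$ to absorb the $(1+O(1/N))$ correction. The main obstacle is the bookkeeping in the first step: getting the exact finite expansion of the coefficient right, including the precise role of $(1-z^q)^N$. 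The probabilistic estimates afterwards are routine applications of the second-moment bound.
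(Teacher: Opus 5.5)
Your argument works once one claim is fixed. The exact cancellation you assert in the first step does not happen, so the displayed formula restricted to $q\nmid\lambda_i$ is wrong. For $w=x_kz$ one has
\[
\log\frac{1-w^q}{1-w}=\sum_{q\nmid j}\frac{w^j}{j}-(q-1)\sum_{m\ge1}\frac{w^{qm}}{qm},
\]
and $w^{qm}=z^{qm}$. The correct identity is therefore
\[
K_{\mathcal P_d(C_q^N)}(x)=\sum_{\lambda\vdash d}\frac{1}{z_\lambda}\prod_i\widetilde p_{\lambda_i}(x),
\qquad
\widetilde p_j=p_j\ \ (q\nmid j),\qquad \widetilde p_{qm}=-(q-1)N.
\]
For instance, for $q=d=3$ one gets $K_{\mathcal P_3(C_3^N)}=p_1^3/6+p_1p_2/2-2N/3$, and your formula misses the term $-2N/3$.

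Your fallback estimate repairs this, provided you apply it to every part divisible by $q$; parts $2q,3q,\dots$ survive too, not only parts equal to $q$. Suppose $\lambda$ has $a$ parts not divisible by $q$ and $b$ parts divisible by $q$. Its term is $O_{\mathbb P}(N^{a/2+b})$, and $a+qb\le d$. If $b\ge1$ and $q\ge3$, this gives $a/2+b\le d/2-b(q-2)/2\le(d-1)/2$. If $b=0$, then $a\le d-1$ unless $\lambda=(1^d)$. With this correction, the rest goes through as you wrote it: the second-moment bounds for the nontrivial character sums $p_j$ with $q\nmid j$, and the final division using Corollary~\ref{dimoo}.

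This is a genuinely different route from the paper's, which never expands the coefficient exactly. The paper substitutes $z=t/\sqrt N$ in the generating function of Proposition~\ref{prop:GF-slice}. It shows that $\log\prod_k(1-X_kz)^{-1}=tS_N/\sqrt N+o_{\mathbb P}(1)$ uniformly for $|t|\le T$: the quadratic term is small because $\mathbb E X^2=0$, and the higher terms are deterministically $O(N^{-1/2})$. It then shows $(1-z^q)^N=1+o(1)$ using $q\ge3$. Finally, it extracts the coefficient of $t^d$ by Cauchy's formula, bounding the error by $\rho^{-d}e^{\rho|S_N|/\sqrt N}\sup_{|t|\le T}|r_N(t)|$.

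Your exact finite identity replaces the contour integral and the uniform-in-$t$ stochastic control with termwise second-moment bounds. It also makes the role of $q\ge3$ transparent: a part divisible by $q$ costs degree at least $q$ but contributes only a factor $N$. It even handles $q=2$ at no extra cost. There the partitions $(2^b,1^{d-2b})$ are also of top order and sum to $N^{d/2}\mathrm{He}_d(S_N/\sqrt N)/d!$, which recovers the Hermite limit. The paper's analytic argument, for its part, avoids the combinatorial bookkeeping and runs parallel to its proof of Proposition~\ref{prop:krawtchouk-hermite-deterministic}.
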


\begin{proof}
We divide the proof into several steps.

By Proposition~\ref{prop:GF-slice}, for every $z\in\C$ with $|z|<1$,
\begin{equation}\label{eq:homogeneous-kernel-gf}
\sum_{m\ge0} K_{\mathcal P_m(C_q^N)}(X)\,z^m
=
(1-z^q)^N\prod_{k=1}^N(1-X_kz)^{-1}.
\end{equation}
Fix $T>0$ and put
\[
z=\frac{t}{\sqrt N},
\qquad |t|\le T.
\]
For all sufficiently large $N$, we have $|X_kz|\le T/\sqrt N<1$. Hence each
factor $(1-X_kz)^{-1}$ is analytic on the disk $|t|\le T$, and the logarithmic
expansions below are valid uniformly on this disk.

For $|w|<1$,
\[
\log(1-w)^{-1}=\sum_{r\ge1}\frac{w^r}{r}.
\]
With $w=X_kz$ and $z=t/\sqrt N$, this series is absolutely and uniformly
convergent for $|t|\le T$ and all large $N$. Therefore
\[
\log\prod_{k=1}^N(1-X_kz)^{-1}
=
\sum_{k=1}^N\sum_{r\ge1}\frac{(X_kz)^r}{r}
=
\sum_{r\ge1}\frac{t^r}{rN^{r/2}}\sum_{k=1}^N X_k^r.
\]
Writing the linear part explicitly, we obtain
\[
\log\prod_{k=1}^N(1-X_kz)^{-1}
=
t\,\frac{S_N}{\sqrt N}+R_N(t),
\]
where
\[
R_N(t)
=
\frac{t^2}{2N}\sum_{k=1}^N X_k^2
+
\sum_{r\ge3}\frac{t^r}{rN^{r/2}}\sum_{k=1}^N X_k^r.
\]

We next record the bounds
\begin{equation}\label{eq:SN-OP}
\frac{S_N}{\sqrt N}=O_{\mathbb P}(1),
\end{equation}
and
\begin{equation}\label{eq:RN-small}
\sup_{|t|\le T}|R_N(t)|=o_{\mathbb P}(1).
\end{equation}

Indeed, since $|X_k|=1$ and $\mathbb E[X_k]=0$, independence gives
\[
\mathbb E|S_N|^2
=
\sum_{i=1}^N\mathbb E|X_i|^2
+
\sum_{i\ne j}\mathbb E[X_i]\mathbb E[\overline{X_j}]
=
N.
\]
Thus Chebyshev's inequality implies \eqref{eq:SN-OP}.

For the quadratic part of $R_N$, Lemma~\ref{lem:roots-moments} gives
$\mathbb E[X_k^2]=0$, and hence
\[
\mathbb E\bigg|
\frac1N\sum_{k=1}^N X_k^2
\bigg|^2
=
\frac1N.
\]
Therefore
\[
\frac1N\sum_{k=1}^N X_k^2
=
O_{\mathbb P}(N^{-1/2}),
\]
and consequently
\[
\sup_{|t|\le T}
\bigg|
\frac{t^2}{2N}\sum_{k=1}^N X_k^2
\bigg|
=
O_{\mathbb P}(N^{-1/2}).
\]
For the higher-order part, we use the deterministic estimate
$\big|\sum_{k=1}^N X_k^r\big|\le N$. Hence, uniformly for $|t|\le T$,
\[
\sum_{r\ge3}\frac{|t|^r}{rN^{r/2}}
\bigg|\sum_{k=1}^N X_k^r\bigg|
\le
\sum_{r\ge3}\frac{T^r}{r}N^{1-r/2}
=
O(N^{-1/2}).
\]
Combining the last two estimates proves \eqref{eq:RN-small}.

It follows that, uniformly for $|t|\le T$,
\[
\log\prod_{k=1}^N(1-X_kz)^{-1}
=
t\,\frac{S_N}{\sqrt N}
+
o_{\mathbb P}(1),
\qquad z=\frac{t}{\sqrt N}.
\]

We now control the prefactor $(1-z^q)^N$. Since $q\ge3$ and $z=t/\sqrt N$,
\[
N\log(1-z^q)
=
-Nz^q+O(N|z|^{2q})
=
-t^qN^{1-q/2}+O(N^{1-q}|t|^{2q})
=
o(1),
\]
uniformly for $|t|\le T$. Hence
\[
(1-z^q)^N=1+o(1)
\]
uniformly for $|t|\le T$.

Returning to \eqref{eq:homogeneous-kernel-gf}, exponentiating the logarithmic
estimate and multiplying by the prefactor, we obtain
\[
\sum_{m\ge0}
\frac{K_{\mathcal P_m(C_q^N)}(X)}{N^{m/2}}\,t^m
=
\exp\bigg(t\,\frac{S_N}{\sqrt N}\bigg)
\bigl(1+r_N(t)\bigr),
\]
where
\[
\sup_{|t|\le T}|r_N(t)|=o_{\mathbb P}(1).
\]

We now extract the coefficient of $t^d$. Fix $0<\rho<T$. By Cauchy's integral
formula,
\[
\frac{K_{\mathcal P_d(C_q^N)}(X)}{N^{d/2}}
=
\frac{1}{2\pi i}
\oint_{|t|=\rho}
\frac{1}{t^{d+1}}
\sum_{m\ge0}
\frac{K_{\mathcal P_m(C_q^N)}(X)}{N^{m/2}}\,t^m
\,dt.
\]
Thus
\[
\frac{K_{\mathcal P_d(C_q^N)}(X)}{N^{d/2}}
=
M_N+E_N,
\]
where
\[
M_N
=
\frac{1}{2\pi i}
\oint_{|t|=\rho}
\frac{\exp(tS_N/\sqrt N)}{t^{d+1}}\,dt
\]
and
\[
E_N
=
\frac{1}{2\pi i}
\oint_{|t|=\rho}
\frac{\exp(tS_N/\sqrt N)r_N(t)}{t^{d+1}}\,dt.
\]
Expanding the exponential and applying Cauchy's formula gives
\[
M_N
=
\frac1{d!}
\bigg(\frac{S_N}{\sqrt N}\bigg)^d.
\]
Moreover,
\[
|E_N|
\le
\rho^{-d}
\exp\bigg(\rho\,\frac{|S_N|}{\sqrt N}\bigg)
\sup_{|t|\le T}|r_N(t)|.
\]
By \eqref{eq:SN-OP}, $S_N/\sqrt N=O_{\mathbb P}(1)$, and hence
\[
\exp\bigg(\rho\,\frac{|S_N|}{\sqrt N}\bigg)=O_{\mathbb P}(1).
\]
Since $\sup_{|t|\le T}|r_N(t)|=o_{\mathbb P}(1)$, it follows that
$E_N=o_{\mathbb P}(1)$. Therefore
\[
\frac{K_{\mathcal P_d(C_q^N)}(X)}{N^{d/2}}
=
\frac1{d!}
\bigg(\frac{S_N}{\sqrt N}\bigg)^d
+
o_{\mathbb P}(1).
\]

Finally, by Corollary~\ref{dimoo},
\[
\sqrt{\dim\mathcal P_d(C_q^N)}
\sim
\frac{N^{d/2}}{\sqrt{d!}},
\qquad N\to\infty.
\]
Dividing by this asymptotic relation gives
\[
\frac{K_{\mathcal P_d(C_q^N)}(X)}
     {\sqrt{\dim \mathcal P_d(C_q^N)}}
=
\frac{1}{\sqrt{d!}}
\bigg(\frac{S_N}{\sqrt N}\bigg)^d
+
o_{\mathbb P}(1),
\]
as claimed.
\end{proof}

\subsection{Proof of Theorem~\ref{thm:asymptotic-projection-Pd} for $\mathcal P_d(C_q^N)$}

In this subsection we prove the homogeneous part of
Theorem~\ref{thm:asymptotic-projection-Pd} for $q\ge3$. The case $q=2$ is
covered by Theorem~\ref{thm: main spherical}, since in the Boolean case the
homogeneous, spherical, and tetrahedral spaces of degree $d$ coincide.

Let $X=(X_1,\ldots,X_N)$ be uniformly distributed on $C_q^N$, so that the
coordinates $X_k$ are independent and uniformly distributed on $C_q$. By the
general integral formula for projection constants on finite Abelian groups,
\eqref{hamming-int}, we have
\[
\boldsymbol{\lambda}\big(\mathcal P_d(C_q^N)\big)
=
\mathbb E\big|K_{\mathcal P_d(C_q^N)}(X)\big|.
\]
Equivalently,
\[
\frac{\boldsymbol{\lambda}\big(\mathcal P_d(C_q^N)\big)}
     {\sqrt{\dim \mathcal P_d(C_q^N)}}
=
\mathbb E\bigg[
\bigg|
\frac{K_{\mathcal P_d(C_q^N)}(X)}
     {\sqrt{\dim \mathcal P_d(C_q^N)}}
\bigg|
\bigg].
\]

\begin{proof}[Proof of Theorem~\ref{thm:asymptotic-projection-Pd}, homogeneous case for $q\ge3$]
For every $N$, by orthonormality of the degree--$d$ characters,
\[
\mathbb E\bigg[
\bigg|
\frac{K_{\mathcal P_d(C_q^N)}(X)}
     {\sqrt{\dim \mathcal P_d(C_q^N)}}
\bigg|^2
\bigg]
=1.
\]
Indeed,
\[
\mathbb E\big|K_{\mathcal P_d(C_q^N)}(X)\big|^2
=
\dim \mathcal P_d(C_q^N).
\]
Thus the family
\[
\left\{
\frac{K_{\mathcal P_d(C_q^N)}(X)}
     {\sqrt{\dim \mathcal P_d(C_q^N)}}
\right\}_{N}
\]
is uniformly integrable.

By Proposition~\ref{prop:CLTreduction}, for each fixed $d\ge1$,
\[
\frac{K_{\mathcal P_d(C_q^N)}(X)}
     {\sqrt{\dim \mathcal P_d(C_q^N)}}
=
\frac{1}{\sqrt{d!}}
\bigg(\frac{S_N}{\sqrt N}\bigg)^d
+
o_{\mathbb P}(1),
\qquad N\to\infty,
\]
where $S_N=\sum_{k=1}^N X_k$.

Identifying $\C$ with $\mathbb R^2$, the multivariate central limit theorem
applied to the independent random variables uniformly distributed on the
$q$-th roots of unity, with $q\ge3$, gives
\[
\frac{S_N}{\sqrt N}
\overset{D}{\longrightarrow}
\mathbf G,
\qquad N\to\infty,
\]
where $\mathbf G$ is a standard circular complex Gaussian random variable,
that is,
\[
\mathbf G=\frac{G_1+iG_2}{\sqrt2},
\qquad
G_1,G_2\ \text{independent and } N(0,1)\text{-distributed}.
\]
Indeed, Lemma~\ref{lem:roots-moments} implies that the covariance matrix of
$(\operatorname{Re}X_1,\operatorname{Im}X_1)$ is $\frac12 I_2$.  To see this, we
write $X_1=A+iB$. Since $\mathbb E (X_1)=0$, we have
$\mathbb E (A)=\mathbb E (B)=0$. Moreover, $\mathbb E (X_1^2)=0$ gives
$\mathbb E(A^2-B^2)=0$ and $\mathbb E(AB)=0$. Finally, since
$|X_1|^2=A^2+B^2=1$, we get
$\mathbb E (A^2)+\mathbb E (B^2)=1$. Hence
$\mathbb E (A^2)=\mathbb E (B^2)=1/2$ and $\mathbb E(AB)=0$, as claimed.

By the continuous mapping theorem and Slutsky's theorem, we obtain
\[
\frac{K_{\mathcal P_d(C_q^N)}(X)}
     {\sqrt{\dim \mathcal P_d(C_q^N)}}
\overset{D}{\longrightarrow}
\frac{\mathbf G^d}{\sqrt{d!}},
\qquad N\to\infty.
\]
Since the normalized kernels are uniformly integrable, convergence in
distribution implies convergence of the first absolute moments. Hence
\[
\lim_{N\to\infty}
\mathbb E\bigg[
\bigg|
\frac{K_{\mathcal P_d(C_q^N)}(X)}
     {\sqrt{\dim \mathcal P_d(C_q^N)}}
\bigg|
\bigg]
=
\mathbb E\bigg|\frac{\mathbf G^d}{\sqrt{d!}}\bigg|
=
\frac{\mathbb E|\mathbf G|^d}{\sqrt{d!}}.
\]
Since $|\mathbf G|^2$ has the exponential distribution with mean $1$, we have
\[
\mathbb E|\mathbf G|^d
=
\Gamma\bigg(1+\frac d2\bigg).
\]
Therefore
\[
\lim_{N\to\infty}
\frac{\boldsymbol{\lambda}\big(\mathcal P_d(C_q^N)\big)}
     {\sqrt{\dim \mathcal P_d(C_q^N)}}
=
\frac{\Gamma\big(1+\frac d2\big)}{\sqrt{d!}},
\]
which proves the homogeneous part of Theorem~\ref{thm:asymptotic-projection-Pd}
for $q\ge3$.
\end{proof}

\subsection{Proof of Theorem~\ref{thm:asymptotic-projection-Pd} for $\mathcal T_d(C_q^N)$}

In view of the following comparison lemma, the tetrahedral case follows from
the homogeneous case.

\begin{lemma}\label{lem:Pd-Td-projection-comparison}
Fix $q\ge2$ and $d\ge1$. Then, as $N\to\infty$,
\[
\frac{\boldsymbol{\lambda}\big(\mathcal P_d(C_q^N)\big)}
     {\sqrt{\dim \mathcal P_d(C_q^N)}}
-
\frac{\boldsymbol{\lambda}\big(\mathcal T_d(C_q^N)\big)}
     {\sqrt{\dim \mathcal T_d(C_q^N)}}
\longrightarrow 0.
\]
\end{lemma}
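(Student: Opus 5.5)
By \eqref{hamming-int}, both normalized projection constants are first absolute moments of normalized kernels. The plan is therefore to compare the two kernels in $L_2$, using orthonormality of characters. Write
\[
Y_N:=\frac{K_{\mathcal P_d(C_q^N)}(X)}{\sqrt{\dim\mathcal P_d(C_q^N)}},\qquad
V_N:=\frac{K_{\mathcal T_d(C_q^N)}(X)}{\sqrt{\dim\mathcal T_d(C_q^N)}}.
\]
The claim is $\mathbb E|Y_N|-\mathbb E|V_N|\to0$. By the reverse triangle inequality and Cauchy--Schwarz,
\[
\big|\mathbb E|Y_N|-\mathbb E|V_N|\big|\le \mathbb E|Y_N-V_N|\le \big(\mathbb E|Y_N-V_N|^2\big)^{1/2},
\]
so it suffices to show that $\|Y_N-V_N\|_{L_2}\to0$.

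The tetrahedral index set $\{\bm m:|\bm m|=d,\ |\operatorname{supp}\bm m|=d\}$ is a subset of the homogeneous one $\{\bm m:|\bm m|=d\}$. Set $D_P:=\dim\mathcal P_d(C_q^N)$ and $D_T:=\dim\mathcal T_d(C_q^N)=\binom Nd$, so $D_T\le D_P$. Split $K_{\mathcal P_d}=K_{\mathcal T_d}+K'$, where $K'$ is the sum of the characters of degree $d$ with support size less than $d$; the number of such characters is $D_P-D_T$. By orthonormality $K_{\mathcal T_d}\perp K'$, $\|K_{\mathcal T_d}\|_2^2=D_T$ and $\|K'\|_2^2=D_P-D_T$. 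Hence
\[
\mathbb E|Y_N-V_N|^2=D_T\Big(\tfrac1{\sqrt{D_P}}-\tfrac1{\sqrt{D_T}}\Big)^2+\frac{D_P-D_T}{D_P}
=\Big(1-\sqrt{D_T/D_P}\Big)^2+1-\frac{D_T}{D_P}.
\]
Equivalently, since both have unit norm, this equals $2-2\sqrt{D_T/D_P}$.

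It therefore remains only to check that $D_T/D_P\to1$. Corollary~\ref{dimoo} gives $D_P=\frac{N^d}{d!}(1+O(1/N))$, and $D_T=\binom Nd=\frac{N^d}{d!}(1+O(1/N))$, so the ratio tends to $1$. One can also see this directly: a degree-$d$ exponent vector with at most $d-1$ nonzero entries is obtained by choosing a support of size at most $d-1$ and then a composition of $d$ on it, giving $O_{d}(N^{d-1})$ such vectors.

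This argument works uniformly for all $q\ge2$, and it is trivial when $q=2$, where the two spaces coincide. There is no real obstacle. The only point needing care is to keep the normalizations of $Y_N$ and $V_N$ separate rather than comparing unnormalized kernels; the orthogonal splitting handles this cleanly.
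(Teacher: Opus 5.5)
Your proof is correct and follows essentially the paper's route. Both arguments use \eqref{hamming-int}, orthonormality of the characters and Cauchy--Schwarz to reduce the claim to $\dim\mathcal T_d(C_q^N)/\dim\mathcal P_d(C_q^N)\to1$. The only difference is packaging. The paper bounds the unnormalized difference $\mathbb E|K_{\mathcal P_d(C_q^N)}-K_{\mathcal T_d(C_q^N)}|$ by counting the $O(N^{d-1})$ extra characters, and then treats the change of normalization separately using $\boldsymbol{\lambda}(\mathcal T_d(C_q^N))\le\sqrt{\dim\mathcal T_d(C_q^N)}$. Your exact identity $\mathbb E|Y_N-V_N|^2=2-2\sqrt{D_T/D_P}$ handles both steps at once.
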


\begin{proof}
If $q=2$, then every $\bm m\in\{0,1\}^N$ with $|\bm m|=d$ satisfies
$|\operatorname{supp}(\bm m)|=d$. Hence
\[
\mathcal P_d(C_2^N)=\mathcal T_d(C_2^N),
\]
and there is nothing to prove.

We may therefore assume that $q\ge3$. Let
\[
I_P
:=
\big\{\bm m\in\{0,\ldots,q-1\}^N\colon |\bm m|=d\big\},
\qquad
I_T
:=
\big\{\bm m\in\{0,1\}^N\colon |\bm m|=d\big\}.
\]
Thus $I_T\subset I_P$. Define
\[
R_{N,d,q}(x)
:=
K_{\mathcal P_d(C_q^N)}(x)-K_{\mathcal T_d(C_q^N)}(x)
=
\sum_{\bm m\in I_P\setminus I_T}\chi_{\bm m}(x).
\]
If $\bm m\in I_P\setminus I_T$, then $|\bm m|=d$ and at least one nonzero
coordinate of $\bm m$ is larger than $1$. Hence
\[
|\operatorname{supp}(\bm m)|\le d-1.
\]
Consequently,
\[
I_P\setminus I_T
\subset
\bigcup_{r=1}^{d-1}
\big\{
\bm m\in\{0,\ldots,q-1\}^N\colon
|\bm m|=d,\ |\operatorname{supp}(\bm m)|=r
\big\}.
\]
For fixed $r$, the support can be chosen in $\binom Nr$ ways, and once the
support is fixed the number of possible exponent vectors is bounded by a
constant depending only on $d$ and $q$. Therefore
\[
|I_P\setminus I_T|
\le
C_{d,q}\sum_{r=1}^{d-1}\binom Nr
=
O_{d,q}(N^{d-1}).
\]
By orthonormality of the characters on $C_q^N$,
\[
\mathbb E\big|R_{N,d,q}(X)\big|^2
=
|I_P\setminus I_T|
=
O_{d,q}(N^{d-1}),
\]
where $X$ is uniformly distributed on $C_q^N$. Hence, by Cauchy's inequality,
\[
\mathbb E|R_{N,d,q}(X)|
\le
\big(\mathbb E|R_{N,d,q}(X)|^2\big)^{1/2}
=
O_{d,q}(N^{(d-1)/2}).
\]
By \eqref{hamming-int},
\[
\boldsymbol{\lambda}\big(\mathcal P_d(C_q^N)\big)
=
\mathbb E\big|K_{\mathcal P_d(C_q^N)}(X)\big|,
\qquad
\boldsymbol{\lambda}\big(\mathcal T_d(C_q^N)\big)
=
\mathbb E\big|K_{\mathcal T_d(C_q^N)}(X)\big|.
\]
Therefore the triangle inequality gives
\[
\big|
\boldsymbol{\lambda}\big(\mathcal P_d(C_q^N)\big)
-
\boldsymbol{\lambda}\big(\mathcal T_d(C_q^N)\big)
\big|
\le
\mathbb E|R_{N,d,q}(X)|
=
O_{d,q}(N^{(d-1)/2}).
\]
On the other hand,
\[
\dim\mathcal T_d(C_q^N)=\binom Nd\sim\frac{N^d}{d!},
\]
whereas Corollary~\ref{dimoo} gives
\[
\dim\mathcal P_d(C_q^N)\sim\frac{N^d}{d!}.
\]
In particular,
\[
\frac{\sqrt{\dim\mathcal T_d(C_q^N)}}
     {\sqrt{\dim\mathcal P_d(C_q^N)}}
\longrightarrow 1.
\]
It follows that
\[
\frac{
\big|
\boldsymbol{\lambda}\big(\mathcal P_d(C_q^N)\big)
-
\boldsymbol{\lambda}\big(\mathcal T_d(C_q^N)\big)
\big|
}{
\sqrt{\dim\mathcal P_d(C_q^N)}
}
=
O_{d,q}(N^{-1/2})
\longrightarrow 0.
\]

Moreover, again by Cauchy's inequality and orthonormality,
\[
\boldsymbol{\lambda}\big(\mathcal T_d(C_q^N)\big) \le \sqrt{\dim\mathcal T_d(C_q^N)}.
\]
Thus
\[
\boldsymbol{\lambda}\big(\mathcal T_d(C_q^N)\big)\Bigg| \frac{1}{\sqrt{\dim\mathcal P_d(C_q^N)}}
-
\frac{1}{\sqrt{\dim\mathcal T_d(C_q^N)}}\Bigg| \le
\Bigg|\frac{\sqrt{\dim\mathcal T_d(C_q^N)}}{\sqrt{\dim\mathcal P_d(C_q^N)}}
-1\Bigg|=o(1).
\]
Combining the last two estimates yields
\[
\Bigg|
\frac{\boldsymbol{\lambda}\big(\mathcal P_d(C_q^N)\big)} {\sqrt{\dim\mathcal P_d(C_q^N)}}
- \frac{\boldsymbol{\lambda}\big(\mathcal T_d(C_q^N)\big)}
{\sqrt{\dim\mathcal T_d(C_q^N)}}\Bigg|\longrightarrow 0,
\]
as claimed.
\end{proof}

\subsection{The top-layer principle for total-degree spaces}

We conclude by showing that the same top-layer principle remains valid in the homogeneous and tetrahedral settings. More precisely, for spaces of degree at most $d$, the asymptotic behaviour of the normalized projection constants is entirely determined by the component of degree exactly $d$.

\begin{proposition}\label{prop:P-T-le-d}
Fix $q\ge2$ and $d\ge1$. Then
\[
\lim_{N\to\infty}
\frac{\boldsymbol{\lambda}\big(\mathcal P_{\le d}(C_q^N)\big)}
     {\sqrt{\dim \mathcal P_{\le d}(C_q^N)}}
=
\lim_{N\to\infty}
\frac{\boldsymbol{\lambda}\big(\mathcal T_{\le d}(C_q^N)\big)}
     {\sqrt{\dim \mathcal T_{\le d}(C_q^N)}}
=
\begin{cases}
\displaystyle
\frac{1}{\sqrt{d!}}\,\mathbb E\big|\mathrm{He}_d(Z)\big|,
& q=2,\\[3ex]
\displaystyle
\frac{\Gamma\big(1+\frac d2\big)}{\sqrt{d!}},
& q\ge3,
\end{cases}
\]
where $Z$ is a standard Gaussian random variable.
\end{proposition}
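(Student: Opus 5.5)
The plan is to mirror the proof of Proposition~\ref{prop:B-le-d}: show that the normalized kernel of the degree-at-most-$d$ space differs from the normalized top-degree kernel by $o_{\mathbb P}(1)$, and then use uniform integrability. By \eqref{hamming-int},
\[
\boldsymbol{\lambda}\big(\mathcal P_{\le d}(C_q^N)\big)=\mathbb E\Big|\sum_{k=0}^d K_{\mathcal P_k(C_q^N)}(X)\Big|,
\]
with $X$ uniform on $C_q^N$. The tetrahedral case is the same, with kernels $K_{\mathcal T_k(C_q^N)}$. Set
\[
Y_N:=\frac{K_{\mathcal P_{\le d}(C_q^N)}(X)}{\sqrt{\dim\mathcal P_{\le d}(C_q^N)}},
\qquad
Y_{N,d}:=\frac{K_{\mathcal P_d(C_q^N)}(X)}{\sqrt{\dim\mathcal P_d(C_q^N)}}.
\]

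The first step is the dimension count. By Corollary~\ref{dimoo}, $\dim\mathcal P_k(C_q^N)=O_{k}(N^k)$ and $\dim\mathcal P_d(C_q^N)\sim N^d/d!$. Hence $\dim\mathcal P_{\le d}(C_q^N)\sim N^d/d!$ and the ratio $\dim\mathcal P_d/\dim\mathcal P_{\le d}\to1$. The same holds for $\dim\mathcal T_k(C_q^N)=\binom Nk$.

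The second step is to remove the lower layers. For $k<d$, orthonormality of characters gives
\[
\mathbb E|K_{\mathcal P_k}(X)|^2=\dim\mathcal P_k=O(N^k),
\]
so $K_{\mathcal P_k}(X)/\sqrt{\dim\mathcal P_{\le d}}=O_{\mathbb P}(N^{(k-d)/2})=o_{\mathbb P}(1)$. Together with the dimension ratio, this yields $Y_N=Y_{N,d}+o_{\mathbb P}(1)$. In fact one gets the stronger statement $\mathbb E|Y_N-Y_{N,d}|\to0$, because the $L^2$ norm of the difference tends to zero: the lower layers contribute $O(N^{-1/2})$ and the rescaling of the top layer contributes $|\sqrt{\dim\mathcal P_d/\dim\mathcal P_{\le d}}-1|\to0$. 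With this $L^1$ estimate, $\mathbb E|Y_N|-\mathbb E|Y_{N,d}|\to0$ follows directly from the triangle inequality, and no separate uniform integrability argument is needed.

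It then remains to identify $\lim\mathbb E|Y_{N,d}|$, which is exactly Theorem~\ref{thm:asymptotic-projection-Pd}. For $q\ge3$ the limit is $\Gamma(1+d/2)/\sqrt{d!}$, coming from the homogeneous case and, via Lemma~\ref{lem:Pd-Td-projection-comparison}, also the tetrahedral case. For $q=2$ all three spaces coincide, and the limit $\mathbb E|\mathrm{He}_d(Z)|/\sqrt{d!}$ comes from Theorem~\ref{thm: main spherical}. For $q=2$ one could alternatively note that $\mathcal P_{\le d}(C_2^N)=\mathcal T_{\le d}(C_2^N)=\mathcal B_{\le d}(C_2^N)$ and quote Proposition~\ref{prop:B-le-d} directly.

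There is no real obstacle here. The only point needing care is the bookkeeping in the $L^2$ estimate: the lower-degree dimensions must be $O(N^{d-1})$, uniformly in the finitely many $k<d$. This follows from Corollary~\ref{dimoo} applied to each fixed $k$, or from the trivial bound $\dim\mathcal P_k\le\binom{N+k-1}{k}$.
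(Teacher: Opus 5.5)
Your proposal is correct and follows essentially the same route as the paper. The paper also proves $\mathbb E|Y_N-Y_{N,d}|\to0$ directly and then invokes Theorem~\ref{thm:asymptotic-projection-Pd} for the homogeneous and tetrahedral top layers. The only difference is that the paper bounds this $L^1$ difference term by term, using the triangle inequality and Cauchy--Schwarz on each lower layer, where you make a single $L^2$ computation; the two are equivalent.
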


\begin{proof}
Let $X=(X_1,\ldots,X_N)$ be uniformly distributed on $C_q^N$. By
Theorem~\ref{hamming-int},
\[
\boldsymbol{\lambda}\big(\mathcal P_{\le d}(C_q^N)\big)
=
\mathbb E\big|K_{\mathcal P_{\le d}(C_q^N)}(X)\big|,
\]
where
\[
K_{\mathcal P_{\le d}(C_q^N)}(x)
:=
\sum_{k=0}^d K_{\mathcal P_k(C_q^N)}(x)
\qquad
\text{and}
\qquad
K_{\mathcal P_k(C_q^N)}(x)
:=
\sum_{\substack{\bm m\in\{0,\ldots,q-1\}^N\\ |\bm m|=k}}
\chi_{\bm m}(x).
\]
The characters belonging to different homogeneous degrees are distinct, hence
orthonormal in $L^2(C_q^N)$. Therefore
\[
\mathbb E\big|K_{\mathcal P_k(C_q^N)}(X)\big|^2
=
\dim \mathcal P_k(C_q^N).
\]
For every fixed $k\ge1$, Corollary~\ref{dimoo} gives
\[
\dim \mathcal P_k(C_q^N)
\sim
\frac{N^k}{k!},
\qquad N\to\infty,
\]
while $\dim\mathcal P_0(C_q^N)=1$. Consequently,
\[
\dim \mathcal P_{\le d}(C_q^N)
=
\sum_{k=0}^d \dim \mathcal P_k(C_q^N)
\sim
\dim \mathcal P_d(C_q^N)
\sim
\frac{N^d}{d!},
\]
and in particular
\[
\frac{\dim \mathcal P_d(C_q^N)}
     {\dim \mathcal P_{\le d}(C_q^N)}
\longrightarrow 1.
\]

We compare the normalized kernels
\[
\frac{K_{\mathcal P_{\le d}(C_q^N)}}
     {\sqrt{\dim \mathcal P_{\le d}(C_q^N)}}
\quad\text{and}\quad
\frac{K_{\mathcal P_d(C_q^N)}}
     {\sqrt{\dim \mathcal P_d(C_q^N)}}.
\]
By the triangle inequality,
\[
\begin{aligned}
&
\mathbb E\bigg|
\frac{K_{\mathcal P_{\le d}(C_q^N)}(X)}
     {\sqrt{\dim \mathcal P_{\le d}(C_q^N)}}
-
\frac{K_{\mathcal P_d(C_q^N)}(X)}
     {\sqrt{\dim \mathcal P_d(C_q^N)}}
\bigg|
\\
&\le
\bigg|
\sqrt{
\frac{\dim \mathcal P_d(C_q^N)}
     {\dim \mathcal P_{\le d}(C_q^N)}
}
-1
\bigg|
\mathbb E\bigg|
\frac{K_{\mathcal P_d(C_q^N)}(X)}
     {\sqrt{\dim \mathcal P_d(C_q^N)}}
\bigg|
\quad+
\sum_{k=0}^{d-1}
\frac{\mathbb E\big|K_{\mathcal P_k(C_q^N)}(X)\big|}
     {\sqrt{\dim \mathcal P_{\le d}(C_q^N)}}.
\end{aligned}
\]
The first term tends to $0$, because the expectation appearing there is at
most $1$ by Cauchy--Schwarz and orthogonality. For the remaining terms,
Cauchy--Schwarz and orthogonality give
\[
\mathbb E\big|K_{\mathcal P_k(C_q^N)}(X)\big|
\le
\sqrt{\dim \mathcal P_k(C_q^N)}.
\]
Consequently, for $0\le k<d$,
\[
\frac{\mathbb E\big|K_{\mathcal P_k(C_q^N)}(X)\big|}
     {\sqrt{\dim \mathcal P_{\le d}(C_q^N)}}
\le
\bigg(
\frac{\dim \mathcal P_k(C_q^N)}
     {\dim \mathcal P_{\le d}(C_q^N)}
\bigg)^{1/2}
=
O(N^{(k-d)/2})
\longrightarrow 0.
\]
Thus
\[
\mathbb E\bigg|
\frac{K_{\mathcal P_{\le d}(C_q^N)}(X)}
     {\sqrt{\dim \mathcal P_{\le d}(C_q^N)}}
-
\frac{K_{\mathcal P_d(C_q^N)}(X)}
     {\sqrt{\dim \mathcal P_d(C_q^N)}}
\bigg|
\longrightarrow 0.
\]
It follows that
\[
\lim_{N\to\infty}
\mathbb E\bigg|
\frac{K_{\mathcal P_{\le d}(C_q^N)}(X)}
     {\sqrt{\dim \mathcal P_{\le d}(C_q^N)}}
\bigg|
=
\lim_{N\to\infty}
\mathbb E\bigg|
\frac{K_{\mathcal P_d(C_q^N)}(X)}
     {\sqrt{\dim \mathcal P_d(C_q^N)}}
\bigg|.
\]
The homogeneous part of Theorem~\ref{thm:asymptotic-projection-Pd} therefore
gives
\[
\lim_{N\to\infty}
\frac{\boldsymbol{\lambda}\big(\mathcal P_{\le d}(C_q^N)\big)}
     {\sqrt{\dim \mathcal P_{\le d}(C_q^N)}}
=
\begin{cases}
\displaystyle
\frac{1}{\sqrt{d!}}\,\mathbb E\big|\mathrm{He}_d(Z)\big|,
& q=2,\\[3ex]
\displaystyle
\frac{\Gamma\big(1+\frac d2\big)}{\sqrt{d!}},
& q\ge3.
\end{cases}
\]

We now treat $\mathcal T_{\le d}(C_q^N)$. For $0\le k\le d$, set
\[
K_{\mathcal T_k(C_q^N)}(x)
:=
\sum_{\substack{\bm m\in\{0,1\}^N\\ |\bm m|=k}}
\chi_{\bm m}(x)
\qquad \text{and} \qquad
K_{\mathcal T_{\le d}(C_q^N)}(x)
=
\sum_{k=0}^d K_{\mathcal T_k(C_q^N)}(x).
\]
Again, by orthogonality,
\[
\mathbb E\big|K_{\mathcal T_k(C_q^N)}(X)\big|^2
=
\dim \mathcal T_k(C_q^N).
\]
Moreover,
\[
\dim \mathcal T_{\le d}(C_q^N)
=
\sum_{k=0}^d \binom Nk
\sim
\dim \mathcal T_d(C_q^N)
=
\binom Nd
\sim
\frac{N^d}{d!}.
\]
Repeating the argument used for $\mathcal P_{\le d}(C_q^N)$, we obtain
\[
\mathbb E\Bigg|
\frac{K_{\mathcal T_{\le d}(C_q^N)}(X)}
     {\sqrt{\dim \mathcal T_{\le d}(C_q^N)}}
-
\frac{K_{\mathcal T_d(C_q^N)}(X)}
     {\sqrt{\dim \mathcal T_d(C_q^N)}}
\Bigg|
\longrightarrow 0.
\]
Hence
\[
\lim_{N\to\infty}
\mathbb E\Bigg|
\frac{K_{\mathcal T_{\le d}(C_q^N)}(X)}
     {\sqrt{\dim \mathcal T_{\le d}(C_q^N)}}
\Bigg|
=
\lim_{N\to\infty}
\mathbb E\Bigg|
\frac{K_{\mathcal T_d(C_q^N)}(X)}
     {\sqrt{\dim \mathcal T_d(C_q^N)}}
\Bigg|.
\]
The tetrahedral part of Theorem~\ref{thm:asymptotic-projection-Pd}, together
with \eqref{hamming-int}, gives the desired conclusion for
$\mathcal T_{\le d}(C_q^N)$.
\end{proof}

% \bibliographystyle{abbrv}
% \bibliography{biblio}

\end{document}